\newtheorem{theorem}{Theorem}[section]
\newtheorem*{theorem*}{Theorem}
\newtheorem{corollary}[theorem]{Corollary}
\newtheorem*{corollary*}{Corollary}
\newtheorem{lemma}[theorem]{Lemma}
\newtheorem*{claim*}{Claim}
\newtheorem*{lemma*}{Lemma}
\newtheorem{proposition}[theorem]{Proposition}
\newtheorem*{proposition*}{Proposition}
\newtheorem{def-lem}[theorem]{Definition-Lemma}
\newtheorem{def-prop}[theorem]{Definition-Proposition}
\theoremstyle{definition}
\newtheorem{definition}[theorem]{Definition}
\newtheorem*{definition*}{Definition}
\newtheorem*{example*}{Example}
\numberwithin{equation}{section}
\newcommand{\id}{\mathrm{id}}
\newcommand{\ass}{\mathsf{a}}
\newcommand{\st}{\mathrm{st}}
\newcommand{\eext}{\mathrm{Ext}}
\newcommand{\Ext}{\mathcal{E}\mathrm{xt}}
\newcommand{\pr}{\mathrm{pr}}
\newcommand{\Hom}{\mathbb{H}\mathrm{om}}
\newcommand{\HomTORS}{\mathbb{H}\mathrm{om}_{\textsc{Tors}(\mathbb{G})}}
\newcommand{\hhom}{\mathrm{Hom}}
\newcommand{\HH}{\mathrm{H}}
\newcommand{\MC}{\mathrm{MC}}
\newcommand{\alman}[1]{\mathfrak{#1}}
\newcommand{\gotika}{\alman a}
\newcommand{\gotikc}{\alman c}
\newcommand{\gotikd}{\alman d}
\newcommand{\gotikl}{\alman l}
\newcommand{\gotiks}{\alman s}
\newcommand{\gotikp}{\alman p}
\newcommand{\gotikL}{\alman L}
\newcommand{\gotikr}{\alman r}
\newcommand{\gotikR}{\alman R}
\newcommand{\gotikh}{\alman h}
\newcommand{\invass}{\mathsf{a}^{-1}}
\newcommand{\comm}{\mathsf{c}}
\newcommand{\TORS}{\textsc{Tors}}
\newcommand{\bGerbe}{\mathbb{G}\mathrm{erbe}}
\newcommand{\PICARD}{\mathcal{P}\textsc{icard}(\mathbf{S})}
\newcommand{\FPICARD}{\mathcal{P}\textsc{icard}^{\flat \flat}(\mathbf{S})}
\newcommand{\Perm}{\mathrm{Perm}}
\DeclareMathOperator{\fcirc}{{\diamond}}
\DeclareMathOperator{\ra}{{\rightarrow}}
\DeclareMathOperator{\la}{{\leftarrow}}
\DeclareMathOperator{\lra}{{\longrightarrow}}
\DeclareMathOperator{\Ra}{{\Rightarrow}}
\DeclareMathOperator{\car}{{\circlearrowright}}
\DeclareMathOperator{\Rra}{{\Rrightarrow}}
\DeclareMathOperator{\Lla}{{\Lleftarrow}}
\DeclareMathOperator{\La}{{\Leftarrow}}
\DeclareMathOperator{\Da}{{\Downarrow}}
\DeclareMathOperator{\Ua}{{\Uparrow}}
\newcommand{\boyut}[1]{\mathscr{#1}}
\newcommand{\oneG}{\boyut G}
\newcommand{\ES}{\mathbf{S}}
\newcommand{\Aut}{\mathscr{A}\mspace{-4mu}\mathit{ut}}
\newcommand{\twocat}[1]{\mathcal{#1}^{[-2,0]}(\mathbf{S})}
\newcommand{\twoA}{\mathbb{A}}
\newcommand{\twoB}{\mathbb{B}}
\newcommand{\twoD}{\mathbb{D}}
\newcommand{\twoE}{\mathbb{E}}
\newcommand{\twoG}{\mathbb{G}}
\newcommand{\twoK}{\mathbb{K}}
\newcommand{\twoI}{\mathbb{I}}
\newcommand{\twoL}{\mathbb{L}}
\newcommand{\twoKer}{\mathbb{K}\mathrm{er}}
\newcommand{\twoCo}{\mathbb{C}\mathrm{oker}}
\newcommand{\twoP}{\mathbb{P}}
\newcommand{\twoQ}{\mathbb{Q}}
\newcommand{\twoR}{\mathbb{R}}
\newcommand{\twoS}{\mathbb{S}}
\newcommand{\twoT}{\mathbb{T}}
\newcommand{\twocatD}{\twocat D}
\newcommand{\twocatK}{\twocat K}
\newcommand{\twocatT}{\twocat T}
\newcommand{\Tot}{\mathrm{Tot}}
\newcommand{\der}{\mathcal{D}(\mathbf{S})}
\newcommand{\kesir}[2]{\genfrac{}{}{0pt}{}{#1}{#2}}
\begin{document}
\UseAllTwocells
\title{Higher dimensional study of extensions via torsors}
\author{Cristiana Bertolin}
\address{Dipartimento di Matematica, Universit\`a di Torino, Via Carlo Alberto 10, 
Italy}
\email{cristiana.bertolin@googlemail.com}

\author{Ahmet Emin Tatar}
\address{Department of Mathematics and Statistics, KFUPM, Dhahran, KSA}
\email{atatar@kfupm.edu.sa}

\subjclass{18G15, 18D05}

\keywords{Picard 2-stacks, torsors, extensions, resolution}
\date{}


\begin{abstract}
Let $\ES$ be a site. First we define the 3-category of torsors under a Picard $\ES$-2-stack and we compute its homotopy groups.  Using calculus of fractions we define also a pure algebraic analogue of the 3-category of torsors under a Picard $\ES$-2-stack.
Then we describe extensions of Picard $\ES$-2-stacks as torsors endowed with a group law on the fibers. As a consequence of such a description, we show that any Picard $\ES$-2-stack admits a canonical free partial left resolution that we compute explicitly. Moreover we get an explicit right resolution of the 3-category of extensions of Picard $\ES$-2-stacks in terms of 3-categories of torsors. Using 
the homological interpretation of Picard $\ES$-2-stacks, we rewrite this three categorical dimensions higher right resolution in the derived category $\der$ of abelian sheaves on $\ES$.
\end{abstract}


\maketitle
\tableofcontents

\section*{Introduction}

Let $\ES$ be a site. Picard $\ES$-2-stacks might be succinctly described as the 2-categorical analogue of abelian groups within the context of stacks. Thus they are to be thought of as a generalization of an abelian sheaf on $\ES$, but two categorical dimensions higher. This paper studies Picard $\ES$-2-stacks as part of the larger program of translating between algebro-geometric information and categorical information.
Picard $\ES$-2-stacks reside on the categorical side, while the derived category of abelian sheaves on $\ES$ with cohomology in the range $[-2,0]$ resides on the algebro-geometric side.
In \cite{beta} we have introduced and studied extensions of Picard $\ES$-2-stacks which resides on the categorical side, and we have computed the homological interpretation of such extensions (see \cite[Thm. 1.1]{beta}) which resides on the algebro-geometric side.
In this paper we introduce and study torsors under Picard $\ES$-2-stacks which resides on the categorical side, and we compute the homological interpretation of such torsors (see \ref{thm:introtor}) which resides on the algebro-geometric side. This result on torsors under Picard $\ES$-2-stacks allows us to obtain the two categorical dimensions higher generalization of Grothendieck's study of extensions via torsors done in \cite{SGA7}.
In this setting of translating between algebro-geometric information and categorical information we can cite also the
paper \cite[p. 64]{Mumford65} where Mumford introduced the notion of invertible sheaves on a $\ES$-stack 
(categorical side) and the paper \cite[Prop. 2.1.2]{Brochard09} where Brochard computed the homological interpretation of such invertible sheaves (algebro-geometric side).

Before to describe more in details the results of this paper we recall the notion of gr-$\ES$-2-stack and of 
Picard $\ES$-2-stack. A \emph{gr-$\ES$-2-stack} $\twoG=(\twoG,\otimes,\ass,\pi)$ is an $\ES$-2-stack in 2-groupoids $\twoG$ equipped with a morphism of $\ES$-2-stacks $ \otimes: \twoG \times \twoG \ra \twoG$, called the group law of $\twoG$, with a natural 2-transformation of $\ES$-2-stacks $\ass$, called the \emph{associativity}, which  expresses the associativity constraint of the group law $ \otimes$ of $\twoG$, and with a modification of $\ES$-2-stacks $\pi$ which expresses the obstruction to the coherence of the associativity $\ass$ (i.e. the obstruction to the pentagonal axiom) and which satisfies the coherence axiom of Stasheff's polytope (see (\ref{diagram:stasheff}) or \cite[\S4]{MR1191733} for more details). Moreover we require that for any object $ X $ of $\twoG(U)$ with $U$ an object of $\ES$, the  morphisms of $\ES$-2-stacks $X \otimes - :\twoG \rightarrow \twoG$ and $- \otimes  X:\twoG \rightarrow \twoG$, called respectively the left and the right multiplications by $X$, are equivalences of $\ES$-2-stack. 

A \emph{strict Picard $\ES$-2-stack} (just called Picard $\ES$-2-stack) $\twoP=(\twoP,\otimes,\ass,\pi,\comm,\zeta,\gotikh_1,\gotikh_2,\eta)$ is a gr-$\ES$-2-stack $(\twoP,\otimes,\ass,\pi)$ equipped with a natural 2-transformation of $\ES$-2-stacks $\comm$, called the \emph{braiding}, which expresses the commutativity constraint for the group law $\otimes$ of $\twoP$, with a modification of $\ES$-2-stacks $\zeta$ which expresses the obstruction to the coherence of the braiding $\comm$, with two modifications of $\ES$-2-stacks $\gotikh_1,\gotikh_2$ which express the obstruction to the compatibility between $\ass$ and $\comm$ (i.e. the obstruction to the hexagonal axiom), and finally with a modification of $\ES$-2-stacks $\eta$ which expresses the obstruction to the strictness of the braiding $\comm$. We require also that the modifications $\zeta,\gotikh_1,\gotikh_2$ and $\eta$ satisfy some compatibility conditions. Picard 2-stacks form a 3-category $2\PICARD$ whose hom-2-groupoid consists of additive 2-functors, morphisms of additive 2-functors and modifications of morphisms of additive 2-functors.

Picard $\ES$-2-stacks are the categorical analogue of length 3 complexes of abelian sheaves over $\ES$. In fact in \cite{MR2735751}, it is proven the existence of an equivalence of categories
\begin{equation}\label{intro:2st_flat_flat}
2\st^{\flat\flat} \colon \xymatrix@1{\twocatD \ar[r] & 2\FPICARD}
\end{equation} 
where $\twocatD$ is the full subcategory of the derived category $\der$ of complexes of abelian sheaves over $\ES$ such that $\mathrm{H}^{-i}(A) \neq 0$ for $i=0,1,2$, and $2\FPICARD$ is the category of Picard 2-stacks whose objects are Picard 2-stacks and whose arrows are equivalence classes of additive 2-functors. We denote by $[\,\,]^{\flat\flat}$ the inverse equivalence of $2\st^{\flat\flat}$.

Let $\twoG$ be a gr-$\ES$-2-stack. A \emph{right $\twoG$-torsor} $\twoP=(\twoP,m,\mu,\Theta)$ is an $\ES$-2-stack in 2-groupoids $\twoP$ equipped with a morphism of $\ES$-2-stacks $ m: \twoP \times \twoG \ra \twoP$, called the action of $\twoG$ on $\twoP$, with a natural 2-transformation of $\ES$-2-stacks $\mu$ which expresses the compatibility between the action $m$ and the group law of $\twoG$, with a modification of $\ES$-2-stacks $\Theta$ which expresses the obstruction to the compatibility between $\mu$ and the associativity $\ass$ underlying $\twoG$ (i.e. the obstruction to the pentagonal axiom) and which satisfies the coherence axiom of  Stasheff's polytope. Moreover we require that $\twoP$ is locally equivalent to $\twoG$ and also that $\twoP$ is locally not empty. If $\twoG$ acts on the left side, we get the notion of left $\twoG$-torsor. A \emph{ $\twoG$-torsor} $\twoP=(\twoP,m^l,m^r,\mu^l,\mu^r,\Theta^l,\Theta^r, \kappa, \Omega^r,\Omega^l)$ is an ${\ES}$-2-stack in 2-groupoids $\twoP$ endowed with a structure of left $\twoG$-torsor $(\twoP,m^l,\mu^l,\Theta^l)$, with a structure of right $\twoG$-torsor $(\twoP,m^r,\mu^r,\Theta^r)$, with a natural 2-transformation $\kappa$ which expresses the compatibility between the left and the right action of $\twoG$ on $\twoP$, and finally with two modification of $\ES$-2-stacks $\Omega^l$ and $\Omega^r$ which express the obstruction to the compatibility between the natural 2-transformation $\kappa$ and the natural 2-transformations $\mu^l$ and $\mu^r$ respectively. We require also that the two modification $\Omega^l$ and $\Omega^r$ satisfy some compatibility conditions. $\twoG$-torsors build a 3-category $\TORS(\twoG)$ whose objects are $\twoG$-torsors and whose hom-2-groupoid $\HomTORS(\twoP,\twoQ)$ of morphisms of $\twoG$-torsors between two $\twoG$-torsors is defined in Definitions \ref{def:1_morphisms_of_torsors}, \ref{def:2_morphisms_of_torsors}, \ref{def:3_morphisms_of_torsors}, and \ref{def-prop:2-groupoid_of_torsors}.

Using regular morphisms of length 3 complexes of abelian sheaves it is not possible to obtain all additive 2-functors between Picard 2-stacks. In order to get all of them, in \cite{MR2735751} the second author introduces the tricategory $\twocatT$ of length 3 complexes of abelian sheaves over $\ES$, in which arrows between length 3 complexes are fractions, and he shows that there is a triequivalence 
\begin{equation}\label{intro:2st}
2\st \colon \xymatrix@1{\twocatT \ar[r] & 2\PICARD ,}
\end{equation}
between the tricategory $\twocatT$ and the 3-category $2\PICARD$ of Picard 2-stacks. At the end of section 3 we sketch the definition of $G$-torsor with $G$ a length 3 complex of the tricategory $\twocatT$ (Def. \ref{def:right_torsor_via_fractions}). These $G$-torsors build a tricategory $\TORS(G)$ which is the pure algebraic analogue of the 3-category $\TORS(\twoG)$ of $\twoG$-torsors (Prop. \ref{proposition:triequivalence_of_extensions}).

From now on we assume $\twoG$ to be a Picard $\ES$-2-stack. The hom-2-groupoid $\HomTORS(\twoP,\twoP)$ of morphisms of $\twoG$-torsors from a $\twoG$-torsor $\twoP$ to itself is endowed with a Picard $\ES$-2-stack structure (Lem. \ref{lemma:HomTors(PP)isPicard}) and so its homotopy groups $\pi_{i}(\HomTORS(\twoP,\twoP))$ (for $i=0,1,2$) are abelian groups. We define 
\begin{itemize}
\item $\TORS^1(\twoG)$ is the group of equivalence classes of $\twoG$-torsors: its abelian group law is furnished by the contracted product of $\twoG$-torsors (Def. \ref{def:sum_G_torsors}).
\item $\TORS^i(\twoG)$ (for $i=0,-1,-2$) is the homotopy group $\pi_{-i}(\HomTORS(\twoP,\twoP))$ for any 
$\twoG$-torsor $\twoP$.
\end{itemize}

If $K$ is a complex of abelian sheaves over $\ES$, we denote by $\HH^i(K)$ the $i$-th cohomology group 
$\HH^i\big({\mathrm{R}}\Gamma(K)\big)$ of the derived functor of the functor of global sections applied to $K$. With these notation, we can finally state our first Theorem, which furnishes a parametrization of the elements of $\TORS^i(\twoG)$ by the $i$-th cohomology group $ \HH^i([\twoG]^{\flat\flat})$, and a categorical description of the elements of $\HH^i(K)$, with $K$ a length 3 complex of abelian sheaves, via torsors under Picard $\ES$-2-stacks.

\begin{theorem}\label{thm:introtor}
Let $\twoG$ be a Picard $\ES$-2-stack. Then we have the following isomorphisms  
\[\TORS^i(\twoG) \cong \HH^i([\twoG]^{\flat\flat}) \qquad for \; i=1,0,-1,-2.\]
\end{theorem}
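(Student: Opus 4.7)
The proof naturally splits into the degrees $i\le 0$ and the degree $i=1$, which require rather different techniques.

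For $i\le 0$ the paper's own definition already yields $\TORS^{-i}(\twoG)\cong\pi_i(\HomTORS(\twoP,\twoP))$ for $i=0,1,2$, for any choice of $\twoG$-torsor $\twoP$; the natural choice is $\twoP=\twoG$ with the regular action. The key preliminary step is then to establish an equivalence of Picard $\ES$-2-stacks
\[
\HomTORS(\twoG,\twoG)\;\simeq\;\twoG,
\]
where the left-hand side carries the Picard structure of Lemma \ref{lemma:HomTors(PP)isPicard}. This is essentially a Yoneda-type statement: an equivariant self-morphism of the trivial torsor is determined by its value on the unit object, and the same is true of 2- and 3-morphisms; the Picard structures match once one unwinds how the contracted product on $\HomTORS(\twoG,\twoG)$ is defined. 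Granting this equivalence, both Picard 2-stacks have the same image under $[\,]^{\flat\flat}$, and it remains to invoke the standard dictionary $\pi_i(\twoQ)\cong\HH^{-i}(B)$ relating the $i$-th homotopy group of a Picard 2-stack $\twoQ$ to the hypercohomology of its associated complex $B\in\twocatD$. Applied to $\twoQ=\HomTORS(\twoG,\twoG)$, this gives $\TORS^{-i}(\twoG)\cong\HH^{-i}([\twoG]^{\flat\flat})$ in degrees $0$, $-1$, $-2$.

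For $i=1$ the plan is to construct an explicit bijection between $\TORS^1(\twoG)$ and $\HH^1([\twoG]^{\flat\flat})$ by cocycle descent. Given a $\twoG$-torsor $\twoP$, its local equivalence to $\twoG$ permits one to choose a hypercovering $U_\bullet\to\ES$ on which $\twoP$ trivialises. The descent data package — the transition equivalences on $U_1$, the 2-cells comparing their composites on $U_2$, and the 3-cells witnessing the coherence of those 2-cells on $U_3$ — assembles into a \v{C}ech 1-cocycle with values in a resolution of the length 3 complex $[\twoG]^{\flat\flat}$. Its class in $\HH^1([\twoG]^{\flat\flat})$ depends only on the equivalence class of $\twoP$, and the inverse map reconstructs a torsor from a cocycle by descent of $\ES$-2-stacks. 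Compatibility with the abelian group structure on $\TORS^1(\twoG)$ given by the contracted product of Definition \ref{def:sum_G_torsors} is then a direct bilinearity check at the level of cocycles.

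The main obstacle is exactly this $i=1$ case: a $\twoG$-torsor is a heavy object, equipped with coherence data $\mu$, $\Theta$, $\kappa$, $\Omega^l$, $\Omega^r$ subject to nontrivial pentagon-, compatibility- and bitorsor-type axioms, and one must verify that the cocycle extracted from a trivialisation faithfully encodes all of this data, modulo coboundaries, as a single cohomology class in the hypercohomology of a length 3 complex. Matching the coherence axioms of a torsor with the differentials of the total complex of the \v{C}ech bicomplex is where the bookkeeping becomes delicate; the degrees $i\le 0$, by contrast, are essentially formal consequences of the Yoneda-style equivalence above combined with the established dictionary between Picard 2-stacks and $\twocatD$.
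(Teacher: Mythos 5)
For $i=0,-1,-2$ your argument is essentially the paper's. The paper proves that $\HomTORS(\twoP,\twoP)$ is equivalent, for an \emph{arbitrary} $\twoG$-torsor $\twoP$, to $\Hom_{2\PICARD}(\mathbf{0},\twoG)$ via $F\mapsto (p\mapsto F(e).p)$, which is the same Yoneda-type observation as your $\HomTORS(\twoG,\twoG)\simeq\twoG$; it then concludes by Example \ref{example} and the isomorphism $\Gamma(-)\cong\hhom(\mathbf{E},-)$. One caution: the dictionary you invoke, $\pi_i(\twoQ)\cong\HH^{-i}(B)$, is false for the homotopy groups as defined in Section \ref{section:recall_on_Picard_2_stacks} --- those are the cohomology \emph{sheaves} $\mathrm{H}^{-i}(B)$, not the groups $\HH^{-i}\big(\mathrm{R}\Gamma(B)\big)$. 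What you actually need (and what the paper's routing through $\Hom_{2\PICARD}(\mathbf{0},\twoG)$ and $\hhom(\mathbf{E},-)\cong\Gamma(-)$ delivers) is that the homotopy groups of the 2-groupoid of \emph{global} morphisms of torsors are $\HH^{-i}\big(\tau_{\leq 0}\mathrm{R}\Gamma([\twoG]^{\flat\flat})\big)$; since $\TORS^{0}(\twoG)$, $\TORS^{-1}(\twoG)$, $\TORS^{-2}(\twoG)$ are defined via global (2-, 3-)morphisms, that is the statement which closes the argument.

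For $i=1$ your route is genuinely different from the paper's, and this is where the gap lies. The paper does not extract a \v{C}ech or hypercover cocycle from local trivialisations. Instead it builds $\Theta$ by forming the free Picard 2-stack $\mathbb{Z}[\twoP]$, exhibiting it as an extension $\twoKer(H)\ra\mathbb{Z}[\twoP]\ra\mathbb{Z}[\mathbf{0}]$, pushing down along $\twoKer(H)\ra\twoG$, and taking the image of $1$ under the connecting map $\partial\colon\HH^0(\mathbf{E})\ra\HH^1([\twoG]^{\flat\flat})$ of the associated long exact sequence; the inverse $\Psi$ comes from an embedding of $[\twoG]^{\flat\flat}$ into a complex of injectives, giving $\twoG\ra\twoI\ra\twoK$, and pulling back $\twoI$ along a global section of $\twoK$ lifting the given class. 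This converts all the coherence bookkeeping into the homological algebra of extensions already established in \cite{beta}. Your plan, by contrast, leaves precisely the hard part undone: you would have to (a) justify that hypercover \v{C}ech cohomology computes $\HH^1$ of a length 3 complex on an arbitrary site, (b) show that the data $(m,\mu,\Theta,\kappa,\Omega^l,\Omega^r)$ and their axioms translate exactly into the cocycle condition in the total complex, with morphisms of torsors giving exactly the coboundaries, and (c) verify additivity under the contracted product at the cocycle level. None of these is carried out, and (b) in particular is not a routine check. As written, the $i=1$ case is a plausible programme rather than a proof; either supply these verifications or switch to the extension-theoretic argument of the paper.
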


Gr-$\ES$-3-stacks are not defined yet. Assuming their existence, the contracted product of $\twoG$-torsors, which equips the set $\TORS^1(\twoG)$ of equivalence classes of $\twoG$-torsors with an abelian group law, should define a structure of gr-$\ES$-3-stack on the 3-category $\TORS(\twoG)$. In this setting our Theorem \ref{thm:introtor} says that the 3-category $\TORS(\twoG)$ of $\twoG$-torsors should be actually the gr-$\ES$-3-stack associated to the object of $\mathcal{D}^{[-3,0]}(\ES)$
\[\tau_{\leq 0} {\mathrm{R}}\Gamma([\twoG]^{\flat\flat}[1])\]
via the generalization of the equivalence $2\st^{\flat\flat}$ (\ref{intro:2st_flat_flat}) to gr-$\ES$-3-stacks and to length 4 complexes of sheaves of sets on $\ES$ (here $\tau_{\leq 0}$ is the good truncation in degree 0). Moreover, in order to define the groups $\TORS^i(\twoG)$ we could use the homotopy groups $\pi_{i}$ of the gr-$\ES$-3-stack $\TORS(\twoG)$: in fact 
$\TORS^i(\twoG)=\pi_{-i+1}(\TORS(\twoG))$  for $ i=1,0,-1,-2.$

If $\twoP$ and $\twoG$ are two Picard $\ES$-2-stacks, an extension $(\twoE,I,J,\varepsilon)$ of $\twoP$ by $\twoG$ consists of a Picard $\ES$-2-stack $\twoE$, two additive 2-functors $I:\twoG \ra \twoE$ and $J:\twoE \ra \twoP$, and  a morphism of additive 2-functors $\varepsilon:J \circ I \Rightarrow 0$, such that the following equivalent conditions are satisfied:
\begin{itemize}
	\item $\pi_0(J): \pi_0(\twoE) \ra \pi_0(\twoP)$ is surjective and $I$ induces an equivalence of Picard $\ES$-2-stacks between $\twoG$ and $\twoKer(J),$ 
	\item $\pi_2(I): \pi_2(\twoG) \ra \pi_2(\twoE)$ is injective and $J$ induces an equivalence of Picard $\ES$-2-stacks between $\twoCo(I)$ and $\twoP$.
\end{itemize}
In \cite{beta} we have proved that extensions of $\twoP$ by $\twoG$ form a 3-category $\Ext(\twoP,\twoG)$ and we have computed the homotopy groups $\pi_{i}(\Ext(\twoP,\twoG))$ for $i=0,1,2,3$. In this paper, we  describe extensions of Picard $\ES$-2-stacks in terms of torsors under Picard $\ES$-2-stacks.  
We start with a special case of extensions, which involve a Picard $\ES$-2-stack generated by an $\ES$-2-stack in 2-groupoids (see Def. \ref{definition:Z[P]}), and whose description in terms of torsors is a direct consequence of Theorem \ref{thm:introtor}:

\begin{corollary}\label{corollary:Z[I]exttor}
Let $\twoG$ be a Picard $\ES$-2-stacks. Consider a gr-$\ES$-2-stack $\twoP$, associated to a length 3 complex of sheaves of groups on $\ES$, and the Picard $\ES$-2-stack $\mathbb{Z}[\twoP]$ generated it. We have the following tri-equivalence of 3-categories 
\[\Ext(\mathbb{Z}[\twoP],\twoG) \cong \TORS(\twoG_\twoP) \]
where $\TORS(\twoG_\twoP)$ denotes the 3-category of $\twoG_\twoP$-torsors over $\twoP$ (see Def. \ref{def:torsorover2stack}).
\end{corollary}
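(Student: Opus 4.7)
The plan is to build a tri-functor $\Phi : \Ext(\mathbb{Z}[\twoQ],\twoG) \lra \TORS(\twoG_\twoQ)$ by pulling back along the canonical morphism $\iota : \twoQ \ra \mathbb{Z}[\twoQ]$. Concretely, given an extension $(\twoE,I,J,\varepsilon)$ of $\mathbb{Z}[\twoQ]$ by $\twoG$, I form the 2-fibre product $\iota^{\ast}\twoE := \twoE \times_{\mathbb{Z}[\twoQ]} \twoQ$ over $\twoQ$; since $J$ is locally trivializable with fibre $\twoG$, the projection $\iota^{\ast}\twoE \to \twoQ$ inherits an action of $\twoG_\twoQ$ and is locally equivalent to it, so it is a $\twoG_\twoQ$-torsor over $\twoQ$ in the sense of Definition \ref{def:torsorover2stack}. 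The same fibre-product recipe extends $\Phi$ to the 1-, 2- and 3-arrows of $\Ext(\mathbb{Z}[\twoQ],\twoG)$.

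For the quasi-inverse I would exploit the defining universal property of $\mathbb{Z}[\twoQ]$: an additive 2-functor out of $\mathbb{Z}[\twoQ]$ is the same datum as a morphism of $\ES$-2-stacks out of $\twoQ$. Given a $\twoG_\twoQ$-torsor $\twoP \to \twoQ$, the Picard 2-stack $\twoE_\twoP$ obtained by formally linearizing $\twoP$ over $\mathbb{Z}[\twoQ]$ sits in a short exact sequence $0 \ra \twoG \ra \twoE_\twoP \ra \mathbb{Z}[\twoQ] \ra 0$ with $\iota^{\ast}\twoE_\twoP \simeq \twoP$ by construction. That $\Phi$ and $\twoP \mapsto \twoE_\twoP$ are mutually quasi-inverse then reduces, via Theorem \ref{thm:introtor} and the computation of $\pi_i(\Ext(\mathbb{Z}[\twoQ],\twoG))$ carried out in \cite{beta}, to the freeness identification
\[
\eext^i\bigl([\mathbb{Z}[\twoQ]]^{\flat\flat},[\twoG]^{\flat\flat}\bigr) \;\cong\; \HH^i\bigl([\twoG_\twoQ]^{\flat\flat}\bigr) \qquad \text{in } \der,
\]
which expresses the projectivity of $\mathbb{Z}[\twoQ]$ under the equivalence $2\st^{\flat\flat}$. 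This is the precise sense in which the Corollary is a \emph{direct} consequence of Theorem \ref{thm:introtor}.

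The main obstacle is the 3-categorical coherence bookkeeping: one must check that $\Phi$ intertwines the contracted product of $\twoG_\twoQ$-torsors (Definition \ref{def:sum_G_torsors}) with the Baer-type sum of extensions, and that it respects the full tower of modifications ($\mu,\Theta,\kappa,\Omega^{l},\Omega^{r}$ on the torsor side; $\ass,\comm,\gotikh_1,\gotikh_2,\zeta,\eta$ on the Picard side). Once this diagrammatic check is in place and the agreement of homotopy groups at every level $i=0,1,2,3$ is verified, a standard tri-equivalence criterion — essential surjectivity together with fully-faithfulness of the induced maps on hom-2-groupoids — concludes the proof.
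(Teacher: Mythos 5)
Your proposal is correct and its mathematical heart coincides with the paper's: both reduce the statement to the derived free--forgetful adjunction $\eext^i(\mathbb{Z}[Q],G)\cong \HH^i(Q,G_Q)$ (with $Q$, $G$, $G_Q$ the complexes corresponding to $\twoQ$, $\twoG$, $\twoG_\twoQ$), obtained by deriving the isomorphism of functors $G\mapsto\hhom_{\mathbb{Z}}(\mathbb{Z}[Q],G)$ and $G\mapsto \HH^0(Q,G_Q)$, and then invoke the homological parametrization of $\Ext$ from \cite{beta} and of $\TORS$ from Theorem \ref{thm:introtor}. Where you differ is in packaging: the paper works entirely in $\der$ and matches homotopy groups in all degrees $i=1,0,-1,-2$ without ever exhibiting the tri-functor realizing the equivalence, whereas you construct it explicitly as pullback along $\twoQ\ra\mathbb{Z}[\twoQ]$, with quasi-inverse given by linearization --- essentially the relative version of the $L_*\mathbb{Z}[\twoP]$ construction the paper already uses in the proof of Theorem \ref{thm:introtor} for $i=1$. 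Your route buys an actual equivalence of 3-categories rather than an abstract agreement of invariants, at the price of the coherence bookkeeping you rightly flag as the main burden. One terminological correction: the identification $\eext^i(\mathbb{Z}[Q],G)\cong\HH^i(Q,G_Q)$ is not ``projectivity'' of $\mathbb{Z}[\twoQ]$ (projectivity would force the higher $\eext$ groups to vanish, which is not claimed and not true); it is the adjunction between the free abelian sheaf functor and the forgetful functor, passed to derived functors.
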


Now, for the general case, if $\twoP$ and $\twoG$ are two Picard $\ES$-2-stacks, we find an explicit description of extensions of $\twoP$ by $\twoG$ in terms of $\twoG_\twoP$-torsors over $\twoP$ which are endowed with an abelian group law on the fibers. More precisely, it exists a tri-equivalence of 3-categories between the 3-category $\Ext(\twoP,\twoG)$ and the 3-category consisting of the data $(\twoE,M,\alpha,\gotika,\chi,\gotiks,\gotikc_1,\gotikc_2)$, where $\twoE$ is a $\twoG_\twoP$-torsors over $\twoP$, $M : p_1^* \; \twoE \wedge p_2^* \; \twoE \ra \otimes^* \; \twoE$ is a morphism of $\twoG_{\twoP^2}$-torsors over $\twoP \times \twoP$ defining a group law on the fibers of $\twoE$ (here $\otimes$ is the group law of $\twoP$ and $p_i: \twoP \times \twoP \ra \twoP$ are the projections), $\alpha$ is a 2-morphism of $\twoG_{\twoP^3}$-torsors expressing the associativity constraint of this group law defined by $M$, $\chi$ is a 2-morphism of $\twoG_{\twoP^2}$-torsors expressing the braiding constraint of this group law defined by $M$, and finally $\gotika,\gotiks, \gotikc_1,\gotikc_2$ are 3-morphisms of $\twoG_{\twoP^i}$-torsors (with $i=4,2,3,3$ respectively) expressing respectively the obstruction to the coherence of $\alpha$, the obstruction to the coherence of $\chi$, and the obstruction to the compatibility between $\alpha$ and $\chi$. We require also that these 3-morphisms of $\twoG_{\twoP^i}$-torsors satisfy some coherence and compatibility conditions. Summarizing we have 

\begin{theorem}\label{thm:introexttor}
Let $\twoP$ and $\twoG$ be two Picard $\ES$-2-stacks. Then we have the following tri-equivalence of 3-categories
\begin{equation*}
 \Ext(\twoP,\twoG)\simeq \left\{\begin{tabular}{l}$(\twoE,M,\alpha,\gotika,\chi,\gotiks,\gotikc_1,\gotikc_2) ~~ \Big|  ~~ \twoE=\twoG_\twoP-\mathrm{torsor~over~} \twoP, $  \\
 $M : p_1^* \; \twoE \wedge p_2^* \; \twoE \ra \otimes^* \; \twoE, \alpha,\gotika,\chi,\gotiks,\gotikc_1,\gotikc_2 \mathrm{~ described~ in ~Prop. \ref{thm:ext-tor}}$ 
 \end{tabular} \right\}
\end{equation*}
\end{theorem}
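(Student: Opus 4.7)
The plan is to construct explicit tri-functors in both directions and verify they are mutually quasi-inverse. Given an extension $(\twoE,I,J,\varepsilon)$ of $\twoP$ by $\twoG$, the additive 2-functor $J:\twoE \to \twoP$ together with the identification $I:\twoG \simeq \twoKer(J)$ realizes $\twoE$ as a $\twoG_\twoP$-torsor over $\twoP$ in the sense of Definition \ref{def:torsorover2stack}: the action of $\twoKer(J)$ on $\twoE$ by translation in $\twoE$ induces the torsor action, and local triviality is exactly the condition that $\pi_0(J)$ is epimorphic. Writing $p_1,p_2:\twoP \times \twoP \to \twoP$ and $\otimes:\twoP\times\twoP\to\twoP$ for the projections and the group law of $\twoP$, the Picard group law of $\twoE$ descends via $J$ to give a morphism of $\twoG_{\twoP^2}$-torsors $M:p_1^*\twoE \wedge p_2^*\twoE \to \otimes^*\twoE$, where the contracted product $\wedge$ (Definition \ref{def:sum_G_torsors}) precisely encodes the quotient by the diagonal $\twoG$-action. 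The associator $\ass_{\twoE}$, braiding $\comm_{\twoE}$ and the modifications $\pi_{\twoE},\zeta_{\twoE},\gotikh_{1,\twoE},\gotikh_{2,\twoE},\eta_{\twoE}$ underlying the Picard structure of $\twoE$ transport to the 2-morphisms $\alpha,\chi$ and 3-morphisms $\gotika,\gotiks,\gotikc_1,\gotikc_2,\gotikp$ of torsors, and their coherence axioms follow tautologically from those of a Picard 2-stack.

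In the reverse direction, given a tuple $(\twoE,M,\alpha,\gotika,\chi,\gotiks,\gotikc_1,\gotikc_2,\gotikp)$, I would endow $\twoE$ with a Picard 2-stack structure whose group law is the composite of $M$ with the structural map $\twoE\times\twoE \to p_1^*\twoE \wedge p_2^*\twoE$ over $\twoP\times\twoP$, and whose higher coherence data are pulled back from $\alpha,\chi,\gotika,\gotiks,\gotikc_i,\gotikp$. The structural morphism $J:\twoE \to \twoP$ then becomes an additive 2-functor by construction of $M$, and local triviality of the torsor produces an additive 2-functor $I:\twoG \simeq J^{-1}(0)\hookrightarrow \twoE$ with $\varepsilon:J\circ I \Ra 0$. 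That this is an extension follows from Theorem \ref{thm:introtor}: local triviality forces $\pi_0(J)$ to be surjective, and the long exact sequence associated to $\twoG \to \twoE \to \twoP$ identifies $\twoG$ with $\twoKer(J)$.

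To extend the correspondence to 1-, 2- and 3-arrows, I would take an additive 2-functor $F:\twoE \to \twoE'$ compatible with $I,J,\varepsilon$ (up to the appropriate higher cells) and observe that it is automatically a morphism of $\twoG_\twoP$-torsors, and that compatibility with the group laws of $\twoE,\twoE'$ translates to compatibility with $M, M'$ up to the 2- and 3-cells described in Proposition \ref{thm:ext-tor}. A dimension-by-dimension inspection then yields bijections between the corresponding hom-2-groupoids, proving the tri-equivalence.

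The main obstacle I anticipate is the bookkeeping of coherence data: one must match the nine pieces of Picard structure on $\twoE$ together with the compatibility cells of $I,J,\varepsilon$ against the nine components of the torsor-with-fiberwise-group-law, and verify that every Stasheff-pentagon, hexagon and strictness constraint on one side corresponds exactly to one on the other. The conceptual heart, rather than a single hard step, is that translating between the absolute group law $+_\twoE$ and its relative avatar $M$ as a morphism of torsors over $\twoP\times\twoP$ commutes with taking associators, braidings and all higher modifications; this rests on the contracted product construction being strictly functorial in the torsor arguments and compatible with pullback along the structural morphisms of the $n$-fold products of $\twoP$.
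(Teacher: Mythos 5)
Your proposal follows essentially the same route as the paper: the paper proves the object-level correspondence as Proposition \ref{thm:ext-tor} by exactly the two constructions you describe (the group law of $\twoE$ descending through the contracted product to $M$ because of the associativity constraints $\ass_{(a,g,b)}$, and conversely $M$ together with $\alpha,\chi,\gotika,\gotiks,\gotikc_i,\gotikp$ reassembling the nine pieces of Picard structure, with the left-multiplication equivalence and hence the neutral object coming for free since morphisms of torsors are equivalences), and then deduces the tri-equivalence by the dimension-by-dimension matching of hom-2-groupoids that you sketch, leaving those details to the reader just as you do. The only cosmetic difference is that you identify $\twoG\simeq\twoKer(J)$ via Theorem \ref{thm:introtor} and a long exact sequence, whereas the paper constructs the morphism of $\twoG$-torsors $\twoG\to\mathbf{0}^*\twoE=\twoKer(J)$ directly from the neutral object of $\twoE$; both are correct.
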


This Theorem generalizes to Picard $\ES$-2-stacks the following result of Grothendieck in \cite[Expos\'e VII 1.1.6 and 1.2]{SGA7}: if $P$ and $G$ are two abelian sheaves, to have an extension of $P$ by $G$ is the same thing as to have a $G_P$-torsor $E$ over $P$, and an isomorphism $ pr_1^*E ~ pr_2^*E \ra +^* E$ of $G_{P^2}$-torsors over $P\times P$ satisfying some associativity and commutativity constraints.

As a consequence of the description of extensions of Picard $\ES$-2-stacks in terms of torsors (Cor. \ref{corollary:Z[I]exttor} and Thm. \ref{thm:introexttor}), we have

\begin{corollary}\label{corollary:resolutionP}
Any Picard $\ES$-2-stack $\twoP$ admits as canonical free partial left resolution in the category $2\FPICARD$ the following complex of Picard $\ES$-2-stack:
\begin{equation*}
\twoL.(\twoP) : \qquad 0 \lra \twoL^5(\twoP)\stackrel{D_5}{\lra}\twoL^4(\twoP) \stackrel{D_4}{\lra} \twoL^3(\twoP) \stackrel{D_3}{\lra} \twoL^2(\twoP) \stackrel{D_2}{\lra} \twoL^1(\twoP) \lra 0
\end{equation*}
with
\begin{align*}
\twoL^1(\twoP)&={\mathbb{Z}}[\twoP];\\ 
\twoL^2(\twoP)&={\mathbb{Z}}[\twoP^2];\\ 
\twoL^3(\twoP)&={\mathbb{Z}}[\twoP^3] \oplus {\mathbb{Z}}[\twoP^2];\\
\twoL^4(\twoP)&={\mathbb{Z}}[\twoP^4]\oplus{\mathbb{Z}}[\twoP^3]\oplus{\mathbb{Z}}[\twoP^3]\oplus{\mathbb{Z}}[\twoP^2]\oplus{\mathbb{Z}}[\twoP];\\
\twoL^5(\twoP)&={\mathbb{Z}}[\twoP^5]\oplus{\mathbb{Z}}[\twoP^4]\oplus{\mathbb{Z}}[\twoP^4]\oplus{\mathbb{Z}}[\twoP^4]\oplus{\mathbb{Z}}[\twoP^3]\oplus{\mathbb{Z}}[\twoP^3]\oplus{\mathbb{Z}}[\twoP^3]\oplus{\mathbb{Z}}[\twoP^2]\oplus{\mathbb{Z}}[\twoP] \oplus {\mathbb{Z}}[\twoP^2];
\end{align*}
in degrees 1,2,3,4, and 5 respectively, and with the differential operators defined by
\begin{align}\label{complex_L(P)}
D_2[p |_1 q] &= [p+q] -[p]-[q];\\[0.1cm]
\nonumber D_3[p |_2 q] &= [p |_1 q] -[q |_1 p];\\[0.1cm]
\nonumber D_3[p |_1 q |_1r] &= [p+q |_1 r] -[p |_1 q+r]+[p |_1 q]-[q |_1 r];\\[0.1cm]
\nonumber D_4[p |_1 q |_1 r |_1 s] &= [p |_1 q |_1 r] +[p |_1 q+r |_1 s]+[q |_1 r |_1 s] -[p+q |_1 r |_1 s] -[p |_1 q |_1 r+s ];\\[0.1cm]
\nonumber D_4[p |_2 q |_1 r] &= [q |_1 r |_1 p] +[p |_2 q+r]+[p |_1 q |_1 r]-[q |_1 p |_1 r]-[p |_2 q]-[p |_2 r];\\[0.1cm]
\nonumber D_4[p |_1 q |_2 r] &= [p |_1 r |_1 q] +[p+q |_2 r]-[p  |_1 q |_1 r]-[r |_1 p |_1 q]-[p |_2 r]-[q |_2 r];\\[0.1cm]
\nonumber D_4[p |_3 q]&=-[p |_2 q]-[q |_2 p];\\[0.1cm]
\nonumber D_4[p]&=-[p |_2 p];\\[0.1cm]
\nonumber D_5[p |_1 q |_1 r |_1 s |_1 t] &=[q |_1 r |_1 s |_1 t]+[p |_1 q+r |_1 s |_1 t]+[p |_1 q |_1 r |_1 s+t] -[p |_1 q |_1 r+s |_1 t] \\[0.1cm]
\nonumber &-[p |_1 q |_1 r |_1 s]-[p+q |_1 r |_1 s |_1 t];\\[0.1cm]
\nonumber D_5[p |_2 q |_1 r |_1 s] &=[p |_1 q |_1 r |_1 s]+[p |_2 q |_1 r+s]+[p |_2 r |_1 s] -[q |_1 p |_1 r |_1 s]- [p |_2 q+r |_1 s] \\[0.1cm]
\nonumber &-[q |_1 r |_1 s |_1 p]+[q |_1 r |_1 p |_1 s]-[p |_2 q |_1 r];\\[0.1cm]
\nonumber D_5[p |_1 q |_1 r |_2 s ] &= -[p |_1 q |_1 r |_1 s]+[p+q  |_1 r |_2 s]+[p |_1 q |_1 s |_1 r]+[p |_1 q |_2 s]+[s |_1 p |_1 q |_1 r]\\[0.1cm]
\nonumber &-[p |_1 q+r |_2 s]-[p |_1 s |_1 q |_1 r]-[q |_1 r |_2 s];\\[0.1cm]
\nonumber D_5[p |_1 q |_2 r |_1 s ] &= [p+q |_2 r |_1 s]-[p |_2 r |_1 s]-[q |_2 r |_1 s]-[p |_1 q |_2 r+s]+[p |_1 q |_2 r]+[p |_1 q |_2 s]\\[0.1cm]
\nonumber & +[p |_1 q |_1 r |_1 s]+[p |_1 r |_1 s |_1 q]+[r |_1 s |_1 p |_1 q]+[r |_1 p |_1 q |_1 s]-[p |_1 r |_1 q |_1 s]\\[0.1cm]
\nonumber & -[r |_1 p |_1 s |_1 q];\\[0.1cm]
\nonumber D_5[p  |_3 q  |_1 r]&=[p  |_3 q+r]+[p  |_2 q  |_1 r]+[q  |_1 r  |_2 p]-[p  |_3 r]-[p  |_3 q];\\[0.1cm]
\nonumber D_5[p  |_1 q  |_3 r]&=[p+q  |_3 r]+[p  |_1 q  |_2 r]+[r  |_2 p  |_1 q]-[p  |_3 r]-[q  |_3 r];\\[0.1cm]
\nonumber D_5[p  |_2 q  |_2 r]&=[p  |_2 q  |_1 r]-[p  |_2 r  |_1 q]+[p  |_1 q  |_2 r]-[q  |_1 p  |_2 r];\\[0.1cm]
\nonumber D_5[p |_4 q]&=[p  |_3 q]-[q  |_3 p];\\[0.1cm]
\nonumber D_5[p]&=[p]+[p]-[p  |_3 p];\\[0.1cm]
\nonumber D_5[p |^4 q]&=-[p |_1 q |_1 p |_1 q]+[p |_1 q |_2 p+q] + [p |_2 p |_1 q]+[q |_2 p |_1 q] - [q |_3 p] + [p + q]-[p]-[q].
\end{align}
The augmentation map is given by the additive 2-functor $\epsilon : {\mathbb{Z}}[\twoP] \ra \twoP, \epsilon([p])=p,$ for any $p \in \twoP$.
\end{corollary}

In the above Corollary, adopting Eilenberg-MacLane's bar notation, we give an explicit definition of the differential operators $D_i$ in terms of objects. Their definitions on 1- and 2-arrows are formally identical to the ones on the objects because of the peculiar nature of the free Picard $\ES$-2-stacks involved in $\twoL.(\twoP)$. We find the explicit definitions of the differentials by translating the data underlying the notion of Picard $\ES$-2-stack and also the constraints that those data have to satisfy: $D_2$ corresponds to the group law $\otimes$ underlying $\twoP$, $D_3[p |_2 q]$ corresponds to the braiding $\comm$, $D_3[p |_1 q  |_1 r]$ corresponds to the associativity $\ass$, $D_4[p  |_1 q  |_1 r  |_1 s]$ corresponds to the modification of $\ES$-2-stacks $\pi$ (\ref{ObstructionCoherenceAss}) which expresses the obstruction to the coherence of the associativity $\ass$ (i.e. the obstruction to the pentagonal axiom), $D_4[p  |_2 q  |_1 r]$ and
$D_4[p  |_1 q  |_2 r]$ correspond respectively to the modifications $\gotikh_1$ and $\gotikh_2$ (\ref{ObstructionCompatibilityAssComm}) which expresses the obstruction to the compatibility between $\ass$ and $\comm$ (i.e. the obstruction to the hexagonal axiom), $D_4[p  |_3 q]$ corresponds to the modification $\zeta$ (\ref{ObstructionCoherenceComm}) which expresses the obstruction to the coherence of the braiding $\comm$, $D_4[p]$ corresponds to the modification $\eta$ (\ref{Strictness}) which expresses the obstruction to the strictness of $\comm$, $D_5[p  |_1 q  |_1 r  |_1 s  |_1 t]$ corresponds to the Stasheff's polytope (\ref{diagram:stasheff}) which expresses the coherence of the modification $\pi$, $D_5[p  |_2 q  |_1 r  |_1 s]$ and $D_5[p  |_1 q  |_1 r  |_2 s]$ correspond respectively to the diagrams (\ref{diagram:hexagone1}), (\ref{diagram:hexagone2}) which express the compatibility of the modifications $\gotikh_1$ and $\gotikh_2$ with the modification $\pi$, $D_5[p  |_1 q  |_2 r  |_1 s]$ corresponds to the equality of the diagrams (\ref{diagram:hexagone3}) and (\ref{diagram:hexagone3bis}) which expresses the comparability of the modifications $\gotikh_1$ and $\gotikh_2$, $D_5[p |_3 q |_1 r]$ and $D_5[p |_1 q |_3 r]$ correspond respectively to the diagrams (\ref{diagram:hexagone1_vs_hexagone2}) and (\ref{diagram:hexagone1_vs_hexagone2_bis}) which express the compatibility between $\gotikh_1$ and $\gotikh_2$ under the above comparison, $D_5[p |_2 q |_2 r]$ corresponds to the diagram (\ref{diagram:Z_system}) which expresses the compatibility of Z-systems, $D_5[p |_4 q]$ corresponds to the equation of 2-arrow (\ref{CoherenceZeta}) which expresses the coherence of $\zeta$, $D_5[p]$ corresponds to the relation $\eta*\eta=\zeta$, and finally $D_5[p|^4q]$ corresponds to the diagram (\ref{diagram:additivity_of_braiding}) which expresses the additive nature of $\eta$.

Remark that the differential $D_2$ corresponds to a morphism of $\ES$-2-stacks, the group law, the differentials $D_3$ correspond to natural 2-transformations, the associativity $\ass$ and the braiding $\comm$, the differentials $D_4$ correspond to modifications, which express the obstructions to the coherence axioms or the compatibility conditions for natural 2-transformations, and finally the differentials $D_5$ correspond to the coherence axioms or the compatibility conditions for modifications.

In \cite[Expos\'e VII, Remark 3.5.4]{SGA7} Grothendieck pointed out that it would be interesting to have for any abelian sheaf $P$ a resolution $L.(P)$, which depends functorially on $P$, and whose entries are sums of free $\mathbb{Z}$-modules generated by cartesian products of $P$. The same issue is addressed in Illusie's book \cite{Illusie}, see in particular Chapter VI page 132 line 13 and Section 11.4. Working with abelian sheaves, in \cite[Expos\'e VII, (3.5.2)]{SGA7} Grothendieck got the first two differential operators $D_2$ and $D_3$ of the resolution $L.(P)$. Working with Picard stacks, in \cite{Breen} and \cite{Breen69} Breen has computed the differential operator $D_4$ of this resolution.
Corollary \ref{corollary:resolutionP} is the authors' contribution to Grothendieck's remark: working with Picard 2-stacks, in this paper we have computed the differential operator $D_5$.

If we denote by $3{\mathcal{P}\textsc{icard}^{\flat \flat \flat}(\ES)}$ the category of Picard 3-stacks whose objects are Picard 3-stacks and whose arrows are equivalence classes of additive 3-functors, another consequence of the description of extensions of Picard $\ES$-2-stacks in terms of torsors (Cor. \ref{corollary:Z[I]exttor} and Thm. \ref{thm:introexttor}) is

\begin{corollary}\label{corollary:resolution}
Let $\twoP$ and $\twoG$ be two Picard $\ES$-2-stacks. The complex
\begin{align*}
 0 &\ra  \TORS(\twoG_\twoP) \stackrel{D_2^*}{\ra}  \TORS(\twoG_{\twoP^2}) \stackrel{D_3^*}{\ra} \TORS(\twoG_{\twoP^3})  \times \TORS(\twoG_{\twoP^2}) \stackrel{D_4^* }{\ra}...\\
\nonumber ...& \stackrel{D_4^* }{\ra} \TORS(\twoG_{\twoP^4})  \times \TORS(\twoG_{\twoP^3})^2 \times \TORS(\twoG_{\twoP^2}) \times \TORS(\twoG_{\twoP})\stackrel{D_5^* }{\ra} ... \\ 
\nonumber ...&   \stackrel{D_5^* }{\ra}   \TORS(\twoG_{\twoP^5})  \times \TORS(\twoG_{\twoP^4})^3 \times \TORS(\twoG_{\twoP^3})^3 \times \TORS(\twoG_{\twoP^2})  \times \\
\nonumber & \TORS(\twoG_{\twoP})  \times \TORS (\twoG_{\twoP^2})\ra 0
\end{align*}
is a right resolution of the 3-category $\Ext(\twoP,\twoG)$ of extensions of $\twoP$ by $\twoG$ in the category $3{\mathcal{P}\textsc{icard}^{\flat \flat \flat}(\ES)}$. Here $D_i^* $ denotes the pull-back via the differential operator $D_i$ (\ref{complex_L(P)}) (for $i=2,3,4,5$).
\end{corollary}

This last result can be rewritten in the derived category $\der$ of abelian sheaves on $\ES$, using the homological interpretation of extensions of Picard $\ES$-2-stacks \cite[Thm. 1.1]{beta} and  of torsors under Picard $\ES$-2-stacks (Thm. \ref{thm:introtor}): 

\begin{corollary}
Let $P$ and $G$ be length 3 complexes of abelian sheaves on $\ES$. The complex 
\begin{align*}
0 &\ra \tau_{\leq 0}{\mathrm{R}}\Gamma(G_P[1])  \stackrel{d_2}{\ra}  \tau_{\leq 0}{\mathrm{R}}\Gamma(G_{P^2}[1]) \stackrel{d_3}{\ra}  \tau_{\leq 0}{\mathrm{R}}\Gamma(G_{P^3}[1]) \times \tau_{\leq 0}{\mathrm{R}}\Gamma(G_{P^2}[1])  
\stackrel{d_4}{\ra} ...\\ 
\nonumber ... &\stackrel{d_4}{\ra} {\mathrm{R}}\Gamma(G_{P^4 }[1]) \times \tau_{\leq 0}{\mathrm{R}}\Gamma(G_{P^3}[1])^2
\times \tau_{\leq 0}{\mathrm{R}}\Gamma(G_{P^2}[1]) \times \tau_{\leq 0}{\mathrm{R}}\Gamma(G_{P}[1]) \stackrel{d_5}{\ra} ...\\
\nonumber ... &\stackrel{d_5}{\ra} \tau_{\leq 0}{\mathrm{R}}\Gamma(G_{P^5 }[1]) \times
 \tau_{\leq 0}{\mathrm{R}}\Gamma(G_{P^4}[1])^3  \times
 \tau_{\leq 0}{\mathrm{R}}\Gamma(G_{P^3}[1])^3 \times
 \tau_{\leq 0}{\mathrm{R}}\Gamma(G_{P^2}[1]) \times\\
\nonumber & \tau_{\leq 0}{\mathrm{R}}\Gamma(G_{P}[1]) \times 
 \tau_{\leq 0}{\mathrm{R}}\Gamma(G_{P^2}[1]) \ra 0
\end{align*}
 is a right resolution of the object $\tau_{\leq 0}{\mathrm{R}}\hhom(P,G[1])$ of $\mathcal{D}^{[-3,0]}(\ES)$.
\end{corollary}

In \cite{MR2995663} the first author describes explicitly extensions of Picard $\ES$-stacks in terms of torsors under Picard $\ES$-stacks which are endowed with an abelian group law on the fibers (see in particular \cite[Thm. 4.1]{MR2995663}). In order to generalize from $\ES$-stacks to $\ES$-2-stacks the notions of \cite{MR2995663} that we need in this paper (as, for example, the definition of torsor) we proceed as follows: the data involving 1-arrows and 2-arrows remain the same, but the coherence axioms or the compatibility conditions, that 2-arrows have to satisfy and that are given via equations of 1-arrows, are replaced by 3-arrows which express the obstruction to the above coherence axioms or compatibility conditions for 2-arrows, and we require that these 3-arrows satisfies some coherence axioms or compatibility conditions that are given via equations of 2-arrows.

We hope that this work will shed some light on the notions of ``torsor" for higher categories with group-like operation. In particular, as in \cite{beta}, we pay a lot of attention to write down the proofs in such a way that they can be easily generalized to Picard $\ES$-n-stacks and to length n+1 complexes of abelian sheaves on $\ES$.

Theorem \ref{thm:introtor} plays an important role in the proof of Theorem 0.1 of \cite{bega} which states that
 the Picard 2-stack of $F$-gerbes $\bGerbe_{\ES} (F)$, with $F$ an abelian sheaf on a site $\ES$, is equivalent (as Picard 2-stack) to the Picard 2-stack associated to the complex $\tau_{\leq 0}{\mathrm{R}}\Gamma(\ES,F[2])$, where $F[2]=[F \to 0 \to 0]$ with $F$ in degree -2. In particular, our Theorem \ref{thm:introtor} allows the first author to obtain a purely categorical proof of the classical fact that $F$-equivalence classes of $F$-gerbes, which are the elements of the 0th-homotopy group of $\bGerbe_{\ES} (F)$, are parametrized by the elements of the cohomological group $\HH^2(\ES,F)$.

The study of torsors under Picard $\ES$-2-stacks is a first step toward the theory of biextensions of Picard $\ES$-2-stacks: in fact, if $\twoP,\twoQ$ and $\twoG$ are Picard $\ES$-2-stacks, a biextension of $(\twoP,\twoQ)$ by $\twoG$ is a $\twoG_{\twoP \times \twoQ}$-torsor over $\twoP \times \twoQ $  endowed with two compatible group laws on the fibers. Using the canonical free partial resolution $\twoL.(\twoP)$ of $\twoP$ (Cor. \ref{corollary:resolutionP}) and the 3-category $\Psi_{\twoL.(\twoP)\otimes \twoL.(\twoQ)}(\twoG)$ introduced in Definition \ref{psi}, we get easily the homological interpretation of biextensions of $(\twoP,\twoQ)$ by $\twoG$: $\pi_{-i+1}(\mathcal{B}\mathrm{iext} (\twoP,\twoQ;\twoG)) \cong \hhom_{\der}\big([\twoP]^{\flat\flat} \otimes [\twoQ]^{\flat\flat},[\twoG]^{\flat\flat}[i]\big)$ for $i=1,0,-1,-2$, where $\pi_{-i+1}(\mathcal{B}\mathrm{iext} (\twoP,\twoQ;\twoG))$ are the homotopy groups of the 3-category of biextensions of $(\twoP,\twoQ)$ by $\twoG$. The theory of biextensions has important applications in the theory of motives since biextensions define bilinear morphisms between motives.

\section*{Acknowledgment}
We thank Larry Breen and Pierre Deligne for their interesting comments about the resolution $\twoL.(\twoP)$ obtained in Corollary \ref{corollary:resolutionP}. We are very grateful to the anonymous referee for the useful remarks concerning the differential operators involved in the resolution $\twoL.(\twoP).$ The second author is supported by KFUPM under research grant RG1322-1 and -2.

\section*{Notation}

In this paper $\ES$ will be any site whose topology is precanonical so that the representable presheaves are sheaves.

We denote by ${\mathcal{K}}(\ES)$ the category of (cochain) complexes of abelian sheaves on the site $\ES$. Let $\twocatK$ be the subcategory of ${\mathcal{K}}(\ES)$ consisting of complexes $K=(K^i)_{i \in \mathbb{Z}}$ such that $K^i=0$ for $i \not= -2,-1$ or $0$. The good truncation $ \tau_{\leq n} K$ of a complex $K$ of ${\mathcal{K}}(\textbf{S})$ is the following complex: $ (\tau_{\leq n} K)^i= K^i$ for $i <n,  (\tau_{\leq n} K)^n= \ker(d^n)$, and $(\tau_{\leq n} K)^i= 0$ for $i > n$. For any $i \in {\mathbb{Z}}$, the shift functor $[i]:{\mathcal{K}}(\ES) \ra {\mathcal{K}}(\ES) $ acts on a  complex $K=(K^n)_{n \in \mathbb{Z}}$ as $(K[i])^n=K^{i+n}$ and $d^n_{K[i]}=(-1)^{i} d^{n+i}_{K}$.

Denote by $\der$ the derived category of abelian sheaves on $\ES$, and let $\twocatD$ be the full subcategory of $\der$ consisting of complexes $K$ such that ${\mathrm{H}}^i (K)=0$ for $i \not= -2,-1$ or $0$. If $K$ and $L$ are complexes of $\der$, the group $\eext^i(K,L)$ is by definition $\hhom_{\der}(K,L[i])$ for any $i \in {\mathbb{Z}}$. Let ${\mathrm{R}}\hhom(-,-)$ be the derived functor of the bifunctor $\hhom(-,-)$. The $i$-th cohomology group ${\mathrm{H}}^i\big({\mathrm{R}}\hhom(K,L) \big)$ of ${\mathrm{R}}\hhom(K,L)$ is isomorphic to $\hhom_{\der}(K,L[i])$.
The functor $\Gamma(-)$ of global sections is isomorphic to the functor $\hhom(\mathbf{e},-),$ where $\mathbf{e}$ is the final object of the category of abelian sheaves on $\ES.$ Let ${\mathrm{R}}\Gamma(-)$ be the derived functor of the functor $\Gamma(-)$ of global sections. The $i$-th cohomology group $\HH^i\big({\mathrm{R}}\Gamma(K)\big)$ of ${\mathrm{R}}\Gamma(K)$ is denoted by $\HH^i(K)$.

In this paper, by an $\ES$-2-(pre)stack we will always mean an $\ES$-2-(pre)stack in 2-groupoids.

\section{Recollections on Picard 2-Stacks}\label{section:recall_on_Picard_2_stacks}

The notion of Picard 2-stacks is well known \cite[Def. 8.4]{MR1301844}. In simplest words, it is a 2-stack over a site equipped with a commutative group-like structure. In the literature, there are no references that the authors are aware of where the details of the commutative group-like structure of a 2-stack is stated explicitly. Although we believe that it is known by the experts, since it will be needed in the paper, in this section we unravel the details of this structure. In the following definitions, $U$ will denote an object of the site $\ES$. Moreover in the diagrams involving 2-arrows, we will put the symbol $\cong $ in the cells which commute up to a modification of $\ES$-2-stacks coming from the Picard structure.

A \emph{strict Picard $\ES$-2-stack} (just called Picard $\ES$-2-stack) $\twoP=(\twoP,\otimes,\ass,\pi,\comm,\zeta,\gotikh_1,\gotikh_2,\eta)$ is an $\ES$-2-stack $\twoP$ equipped with 
\begin{enumerate}
\item a morphism of $\ES$-2-stacks $\otimes : \twoP \times \twoP \ra \twoP$, called the \emph{group law} of $\twoP$.  For simplicity instead of $X \otimes Y$ we write just $XY$ for all $X,Y \in \twoP (U)$;
\item two natural 2-transformations of $\ES$-2-stacks
$\ass :\otimes \circ (\otimes \times \id_\twoP) \Ra \otimes \circ (\id_\twoP \times \otimes)$, called the \emph{associativity}, and 
$\comm : \otimes \circ \mathsf{s} \Ra \otimes$ with $ \mathsf{s}(X,Y)=(Y,X)$ for all $X,Y \in \twoP(U)$, called the \emph{braiding}, which express respectively the associativity and the commutativity constraints of the group law $\otimes$ of $\twoP$; 
\item a modification $\pi$ of $\ES$-2-stacks whose component at $(X,Y,Z,W) \in \twoP^4(U)$ is the 2-arrow

\begin{equation}\label{ObstructionCoherenceAss}
\begin{tabular}{c}
\xymatrix@!=0.75cm@R=1cm{ &((XY)Z)W \ar[dl]_{\ass_{(XY,Z,W)}} \ar[dr]^{\ass_{(X,Y,Z)}W} \ar@{}[dd]|(0.6){\kesir{\mbox{$\La$}}{\pi_{(X,Y,Z,W)}}} &\\(XY)(ZW) \ar[d]_{\ass_{(X,Y,ZW)}} & & (X(YZ))W \ar[d]^{\ass_{(X,YZ,W)}}\\X(Y(ZW)) & & X((YZ)W) \ar[ll]^{X\ass_{(Y,Z,W)}}}
\end{tabular}
\end{equation}		
and which expresses the obstruction to the coherence of the associativity $\ass$ (i.e. the obstruction to the pentagonal axiom);
\item a modification $\zeta$ of $\ES$-2-stacks  whose component at $(X,Y) \in \twoP^2(U)$ is the 2-arrow 
\begin{equation}\label{ObstructionCoherenceComm}
\zeta_{(X,Y)}: \id_{XY} \Ra \comm_{(Y,X)} \circ \comm_{(X,Y)}
\end{equation}
and which expresses the obstruction to the coherence of the braiding $\comm$. The modification $\zeta$ implies the weak invertability of the braiding $\comm$;
\item  two modifications $\gotikh_1,\gotikh_2$ of $\ES$-2-stacks whose components at $(X,Y,Z) \in \twoP^3(U)$ are the 2-arrows
\begin{equation}\label{ObstructionCompatibilityAssComm}
\begin{tabular}{cc}
\xymatrix@C=0.25cm@R=1cm{ & **[l]X(YZ) \ar[r]^{\comm_{(X,YZ)}}  \ar@{}[ddr]|{\kesir{\mbox{$\Da$}}{\gotikh_{1(X,Y,Z)}}}  & **[r](YZ)X\ar[dr]^{\ass_{(Y,Z,X)}}&\\
(XY)Z  \ar[ur]^{\ass_{(X,Y,Z)}} \ar[dr]_{\comm_{(X,Y)}Z}&&& Y(ZX)\\
&**[l](YX)Z \ar[r]_{\ass_{(Y,X,Z)}} & **[r]Y(XZ) \ar[ur]_{Y\comm_{(X,Z)}}&} 
&
\xymatrix@C=0.25cm@R=1cm{&**[l](XY)Z \ar[r]^{\comm_{(XY,Z)}} \ar@{}[ddr]|{\kesir{\mbox{$\Da$}}{\gotikh_{2(X,Y,Z)}}} & **[r]Z(XY)\ar[dr]^{\invass_{(Z,X,Y)}}&\\
X(YZ) \ar[ur]^{\invass_{(X,Y,Z)}} \ar[dr]_{X\comm_{(Y,Z)}}&&&(ZX)Y\\
&**[l]X(ZY) \ar[r]_{\invass_{(X,Z,Y)}}&**[r](XZ)Y\ar[ur]_{\comm_{(X,Z)}Y}&} 
\end{tabular}
\end{equation}
and which express the obstruction to the compatibility between the associativity $\ass$ and the braiding $\comm$ (i.e. the obstruction to the hexagonal axiom);
\item a modification $\eta$ of $\ES$-2-stacks whose component at $X \in \twoP(U)$ is the 2-arrow
\begin{equation}\label{Strictness}
\eta_{X} :\id_{XX} \Ra \comm_{(X,X)}
\end{equation}
and which expresses the obstruction to the strictness of the braiding $\comm$. 
\end{enumerate}

These data satisfy the following compatibility conditions:
\begin{enumerate}[(i)]
\item for any $X \in \twoP(U)$, the morphism of $\ES$-2-stacks $X \otimes -:\twoP \ra \twoP$, called the left multiplication by $X$, is an equivalence of $\ES$-2-stacks;
\item the modification $\pi$ is coherent, i.e. it satisfies the coherence axiom of Stasheff's polytope (see \cite[\S 4]{MR1278735}): for all $X,Y,Z,W,T \in \twoP(U)$ the following equation of 2-arrows holds

\begin{equation}\label{diagram:stasheff}
\begin{tabular}{c}
\begin{tikzpicture}[baseline=(current bounding box.center), descr/.style={fill=white,inner sep=2.5pt}, scale=1.45]
\node (A) at (1.5,3.75) {\scriptsize{$X(Y(Z(WT)))$}};\node (B) at (0,3) {\scriptsize{$(XY)(Z(WT))$}};\node (C) at (3,3) {\scriptsize{$X(Y((ZW)T))$}};\node (D) at (0,1.5) {\scriptsize{$((XY)Z)(WT)$}};\node (E) at (1.5,1.5) {\scriptsize{$X((YZ)(WT))$}};\node (F) at (3,1.5) {\scriptsize{$X((Y(ZW))T)$}};\node (G) at (0,0.75) {\scriptsize{$(((XY)Z)W)T$}};\node (H) at (1.5,0.75) {\scriptsize{$(X(YZ))(WT)$}};\node (I) at (3,0.75) {\scriptsize{$X(((YZ)W)T)$}};\node (J) at (0,0) {\scriptsize{$((X(YZ))W)T$}};\node (K) at (3,0) {\scriptsize{$(X((YZ)W))T$}};
\node (O) at (4,1.5) {$=$};
\node (a) at (6.5,3.75) {\scriptsize{$X(Y(Z(WT)))$}};\node (b) at (5,3) {\scriptsize{$(XY)(Z(WT))$}};\node (c) at (8,3) {\scriptsize{$X(Y((ZW)T))$}};\node (d) at (6.5,2.25) {\scriptsize{$(XY)((ZW)T)$}};\node (e) at (5,1.5) {\scriptsize{$((XY)Z)(WT)$}};\node (f) at (6.5,1.5) {\scriptsize{$((XY)(ZW))T$}};\node (g) at (8,1.5) {\scriptsize{$X((Y(ZW))T)$}};\node (h) at (5,0.75) {\scriptsize{$(((XY)Z)W)T$}};\node (i) at (6.5,0.75) {\scriptsize{$(X(Y(ZW)))T$}};\node (j) at (8,0.75) {\scriptsize{$X(((YZ)W)T)$}};\node (k) at (5,0) {\scriptsize{$((X(YZ))W)T$}};\node (l) at (8,0) {\scriptsize{$(X((YZ)W))T$}};
\node (X) at (0.75,2.1) {$\kesir{\mbox{$\La$}}{\pi_{(X,Y,Z,WT)}}$};
\node (Y) at (2.25,2.1) {$\kesir{\mbox{$\La$}}{X\pi_{(Y,Z,W,T)}}$};
\node (Z) at (2.2,0.42) {$\kesir{\mbox{$\La$}}{\pi_{(X,YZ,W,T)}}$};
\node (W) at (0.75,0.75) {\scriptsize{$\cong$}};
\node (x) at (5.75,1.85) {$\kesir{\mbox{$\La$}}{\pi_{(XY,Z,W,T)}}$};
\node (y) at (7.25,1.85) {$\kesir{\mbox{$\La$}}{\pi_{(X,Y,ZW,T)}}$};
\node (z) at (5.9,0.33) {$\kesir{\mbox{$\Ua$}}{\pi_{(X,Y,Z,W)}T}$};
\node (w) at (6.5,3) {\scriptsize{$\cong$}};
\node (v) at (7.25,0.75) {\scriptsize{$\cong$}};
\path[->, font=\scriptsize]
(B) edge (A)(C) edge (A)(D) edge (B)(D) edge (H)(E) edge (A)(F) edge (C)(G) edge (D)(G) edge (J)(H) edge (E)(I) edge (E)(I) edge (F)(J) edge (H)(J) edge (K)(K) edge (I)(b) edge (a)(c) edge (a)(d) edge (b)(d) edge (c)(e) edge (b)(f) edge (d)(f) edge (i)(g) edge (c)(h) edge (e)(h) edge (f)(h) edge (k)(i) edge (g)(j) edge (g)(k) edge (l)(l) edge (i)(l) edge (j);
\end{tikzpicture}
\end{tabular}
\end{equation}
 \item the modification $\zeta$ is coherent, i.e. for all $X,Y,Z \in \twoP(U)$ the following equation of 2-arrows holds
 \begin{equation}\label{CoherenceZeta}
\zeta_{(Y,X)} * \comm_{(X,Y)}=\comm_{(X,Y)} * \zeta_{(X,Y)},
\end{equation}
 \item the modification $\gotikh_1$ is compatible with $\pi$, i.e. for all $X,Y,Z,W \in \twoP(U)$ the following equation of 2-arrows is satisfied
\begin{equation}\label{diagram:hexagone1}
\begin{tikzpicture}[baseline=(current bounding box.center), descr/.style={fill=white,inner sep=2.5pt}, scale=2.5]
\node (1) at (0.5,2.5) {\scriptsize{$(X(YZ))W$}};\node (2) at (1.5,2.5) {\scriptsize{$X((YZ)W)$}};\node (3) at (0,2) {\scriptsize{$((XY)Z)W$}};\node (4) at (1,2) {\scriptsize{$(XY)(ZW)$}};\node (5) at (2,2) {\scriptsize{$X(Y(ZW)) $}};\node (6) at (0,1.5) {\scriptsize{$((YX)Z)W$}};\node (7) at (1,1.5) {\scriptsize{$(YX)(ZW)$}};\node (8) at (2,1.5) {\scriptsize{$(Y(ZW))X$}};\node (9) at (0,1) {\scriptsize{$(Y(XZ))W$}};\node (10) at (1,1) {\scriptsize{$Y(X(ZW))$}};\node (11) at (2,1) {\scriptsize{$Y((ZW)X)$}};\node (12) at (0,0.5) {\scriptsize{$Y((XZ)W)$}};\node (13) at (2,0.5) {\scriptsize{$Y(Z(WX))$}};\node (14) at (0.5,0) {\scriptsize{$Y((ZX)W)$}};\node (15) at (1.5,0) {\scriptsize{$Y(Z(XW))$}};
\node (0) at (2.5,1.25) {$=$};
\node (1') at (3.5,2.5) {\scriptsize{$(X(YZ))W$}};\node (2') at (4.5,2.5) {\scriptsize{$X((YZ)W)$}};\node (3') at (3,2) {\scriptsize{$((XY)Z)W$}};\node (4') at (3.65,2) {\scriptsize{$((YZ)X)W$}};\node (5') at (4.35,2) {\scriptsize{$((YZ)W)X$}};\node (6') at (5,2) {\scriptsize{$X(Y(ZW))$}};\node (7') at (3,1.5) {\scriptsize{$((YX)Z)W$}};\node (8') at (3.5,1.25) {\scriptsize{$(Y(ZX))W$}};\node (9') at (4,1) {\scriptsize{$(YZ)(XW)$}};\node (10') at (4.5,0.75) {\scriptsize{$(YZ)(WX)$}};\node (11') at (5,1.5) {\scriptsize{$(Y(ZW))X $}};\node (12') at (3,1) {\scriptsize{$(Y(XZ))W$}};\node (13') at (5,1) {\scriptsize{$Y((ZW)X)$}};\node (14') at (3,0.5) {\scriptsize{$Y((XZ)W)$}};\node (15') at (5,0.5) {\scriptsize{$Y(Z(WX))$}};\node (16') at (3.5,0) {\scriptsize{$Y((ZX)W)$}};\node (17') at (4.5,0) {\scriptsize{$Y(Z(XW))$}};
\node (X) at (0.5,1.75) {\scriptsize{$\cong$}};
\node (Y) at (1.1,2.27) {$\kesir{\mbox{$\Da$}}{\pi_{(X,Y,Z,W)}}$};
\node (Z) at (0.5,1.2) {$\kesir{\rotatebox{30}{$\Ra$}}{\pi_{(Y,X,Z,W)}}$};
\node (W) at (1.6,1.45) {$\kesir{\mbox{$\Da$}}{\gotikh_{1(X,Y,ZW)}}$};
\node (V) at (1.1,0.5) {$\kesir{\mbox{$\Da$}}{Y\gotikh_{1(X,Z,W)}}$};
\node (x) at (4.67,2) {\scriptsize{$\cong$}};
\node (y) at (3.25,0.75) {\scriptsize{$\cong$}};
\node (z) at (4.5,0.5) {\scriptsize{$\cong$}};
\node (w) at (3.85,0.4) {$\kesir{\mbox{$\Ra$}}{\pi_{(Y,Z,X,W)}}$};
\node (v) at (3.3,1.72) {$\kesir{\rotatebox{-30}{$\Da$}}{\gotikh_{1(X,Y,Z)}W}$};
\node (u) at (4.1,1.72) {$\kesir{\rotatebox{-30}{$\Da$}}{\gotikh_{1(X,YZ,W)}}$};
\node (t) at (4.73,1.24) {$\kesir{\mbox{$\La$}}{\pi_{(Y,Z,W,X)}}$};
\path[->, font=\scriptsize]
(1) edge (2)(2) edge (5)(3) edge (1)(3) edge (4)(3) edge (6)(4) edge (5)(4) edge (7)(5) edge (8)(6) edge (7)(6) edge (9)(7) edge (10)(8) edge (11)(9) edge (12)(10) edge (11)(11) edge (13)(12) edge (10)(12) edge (14)(14) edge (15)(15) edge (13)(1') edge (2')(1') edge (4')(2') edge (5')(2') edge (6')(3') edge (1')(3') edge (7')(4') edge (8')(4') edge (9')(5') edge (10')(5') edge (11')(6') edge (11')(7') edge (12')(8') edge (16')(9') edge (17')(9') edge (10')(10') edge (15')(11') edge (13')(12') edge (8')(12') edge (14')(13') edge (15')(14') edge (16')(16') edge (17')(17') edge (15');
\end{tikzpicture}
\end{equation}
and the modification $\gotikh_2$ is compatible with $\pi$, i.e. for all $X,Y,Z,W \in \twoP(U)$ the following equation of 2-arrows is satisfied
\begin{equation}\label{diagram:hexagone2}
\begin{tikzpicture}[baseline=(current bounding box.center), descr/.style={fill=white,inner sep=2.5pt}, scale=2.5]
\node (1) at (0.5,2.5) {\scriptsize{$X((YZ)W)$}};\node (2) at (1.5,2.5) {\scriptsize{$(X(YZ))W$}};\node (3) at (0,2) {\scriptsize{$X(Y(ZW))$}};\node (4) at (1,2) {\scriptsize{$(XY)(ZW)$}};\node (5) at (2,2) {\scriptsize{$((XY)Z)W$}};\node (6) at (0,1.5) {\scriptsize{$X(Y(WZ))$}};\node (7) at (1,1.5) {\scriptsize{$(XY)(WZ)$}};\node (8) at (2,1.5) {\scriptsize{$W((XY)Z)$}};\node (9) at (0,1) {\scriptsize{$X((YW)Z)$}};\node (10) at (1,1) {\scriptsize{$((XY)W)Z$}};\node (11) at (2,1) {\scriptsize{$(W(XY))Z$}};\node (12) at (0,0.5) {\scriptsize{$(X(YW))Z$}};\node (13) at (2,0.5) {\scriptsize{$((WX)Y)Z$}};\node (14) at (0.5,0) {\scriptsize{$(X(WY))Z$}};\node (15) at (1.5,0) {\scriptsize{$((XW)Y)Z$}};
\node (0) at (2.5,1.25) {$=$};
\node (1') at (3.5,2.5) {\scriptsize{$X((YZ)W)$}};\node (2') at (4.5,2.5) {\scriptsize{$(X(YZ))W$}};\node (3') at (3,2) {\scriptsize{$X(Y(ZW))$}};\node (4') at (3.65,2) {\scriptsize{$X(W(YZ))$}};\node (5') at (4.35,2) {\scriptsize{$W(X(YZ))$}};\node (6') at (5,2) {\scriptsize{$((XY)Z)W$}};\node (7') at (3,1.5) {\scriptsize{$X(Y(WZ))$}};\node (8') at (3.5,1.25) {\scriptsize{$X((WY)Z)$}};\node (9') at (4,1) {\scriptsize{$(XW)(YZ)$}};\node (10') at (4.5,0.75) {\scriptsize{$(WX)(YZ)$}};\node (11') at (5,1.5) {\scriptsize{$W((XY)Z)$}};\node (12') at (3,1) {\scriptsize{$X((YW)Z)$}};\node (13') at (5,1) {\scriptsize{$(W(XY))Z$}};\node (14') at (3,0.5) {\scriptsize{$(X(YW))Z$}};\node (15') at (5,0.5) {\scriptsize{$((WX)Y)Z$}};\node (16') at (3.5,0) {\scriptsize{$(X(WY))Z$}};\node (17') at (4.5,0) {\scriptsize{$((XW)Y)Z$}};
\node (X) at (0.5,1.75) {\scriptsize{$\cong$}};
\node (Y) at (1.1,2.27) {\scriptsize{$\pi^*_{(X,Y,Z,W)}$}};\node (Y') at (0.75,2.25) {$\Da$};
\node (Z) at (0.5,1.2) {\scriptsize{$\pi^*_{(X,Y,W,Z)}$}};\node (Z') at (0.5,1.3) {$\Ra$};
\node (W) at (1.6,1.4) {\scriptsize{$\gotikh_{2(XY,Z,W)}$}};\node (W') at (1.5,1.5) {$\Da$};
\node (V) at (1.1,0.5) {\scriptsize{$\gotikh_{2(X,Y,W)}Z$}};\node (V') at (0.75,0.5) {$\Da$};
\node (x) at (4.67,2) {\scriptsize{$\cong$}};
\node (y) at (3.25,0.75) {\scriptsize{$\cong$}};
\node (z) at (4.5,0.5) {\scriptsize{$\cong$}};
\node (w) at (3.83,0.4) {\scriptsize{$\pi^*_{(X,W,Y,Z)}$}};\node (w') at (3.85,0.5) {$\Ra$};
\node (v) at (3.35,1.68) {\scriptsize{$X\gotikh_{2(Y,Z,W)}$}};\node (v') at (3.3,1.8) {\rotatebox{225}{$\Ra$}};
\node (u) at (4.1,1.68) {\scriptsize{$\gotikh_{2(X,YZ,W)}$}};\node (u') at (4,1.8) {\rotatebox{225}{$\Ra$}};
\node (t) at (4.73,1.2) {\scriptsize{$\pi^*_{(W,X,Y,Z)}$}};\node (t') at (4.68,1.3) {$\La$};
\path[->, font=\scriptsize]
(1) edge (2)(2) edge (5)(3) edge (1)(3) edge (4)(3) edge (6)(4) edge (5)(4) edge (7)(5) edge (8)(6) edge (7)(6) edge (9)(7) edge (10)(8) edge (11)(9) edge (12)(10) edge (11)(11) edge (13)(12) edge (10)(12) edge (14)(14) edge (15)(15) edge (13)(1') edge (2')(1') edge (4')(2') edge (5')(2') edge (6')(3') edge (1')(3') edge (7')(4') edge (8')(4') edge (9')(5') edge (10')(5') edge (11')(6') edge (11')(7') edge (12')(8') edge (16')(9') edge (17')(9') edge (10')(10') edge (15')(11') edge (13')(12') edge (8')(12') edge (14')(13') edge (15')(14') edge (16')(16') edge (17')(17') edge (15');
\end{tikzpicture}
\end{equation}
where the modification $\pi^*$ is obtained from $\pi$ by inverting some or all $\ass$'s. The modifications $\gotikh_1$ and $\gotikh_2$ are comparable in the sense that the pasting of the 2-arrows in the diagram 
\begin{equation}\label{diagram:hexagone3}
\begin{tikzpicture}[baseline=(current bounding box.center), descr/.style={fill=white,inner sep=2.5pt}, scale=2]
\node (1) at (2,4) {\scriptsize{$X((YZ)W)$}};
\node (2) at (3,4) {\scriptsize{$X((ZY)W)$}};
\node (3) at (4,4) {\scriptsize{$X(Z(YW))$}};
\node (4) at (1,3) {\scriptsize{$(X(YZ))W$}};
\node (5) at (2,3) {\scriptsize{$(X(ZY))W$}};
\node (6) at (3,3) {\scriptsize{$((XZ)Y)W$}};
\node (7) at (5,3) {\scriptsize{$(XZ)(YW)$}};
\node (8) at (0,2) {\scriptsize{$((XY)Z)W$}};
\node (9) at (1,2) {\scriptsize{$(Z(XY))W$}};
\node (10) at (2,2) {\scriptsize{$((ZX)Y)W$}};
\node (11) at (4,2) {\scriptsize{$(ZX)(YW)$}};
\node (12) at (6,2) {\scriptsize{$(XZ)(WY)$}};
\node (13) at (0,1) {\scriptsize{$(XY)(ZW)$}};
\node (14) at (1,1) {\scriptsize{$Z((XY)W)$}};
\node (15) at (3,1) {\scriptsize{$Z(X(YW))$}};
\node (16) at (4,1) {\scriptsize{$Z(X(WY))$}};
\node (17) at (5,1) {\scriptsize{$(ZX)(WY)$}};
\node (18) at (6,1) {\scriptsize{$((XZ)W)Y$}};
\node (19) at (0,0) {\scriptsize{$(ZW)(XY)$}};
\node (20) at (1,0) {\scriptsize{$Z(W(XY))$}};
\node (21) at (2,0) {\scriptsize{$Z((WX)Y)$}};
\node (22) at (4,0) {\scriptsize{$Z((XW)Y)$}};
\node (23) at (5,0) {\scriptsize{$(Z(XW))Y$}};
\node (24) at (6,0) {\scriptsize{$((ZX)W)Y$}};
\node (c_1) at (2,3.5) {$\cong$};\node (c_2) at (3.5,2.5) {$\cong$};\node (c_3) at (5,2) {$\cong$};\node (c_4) at (4,1.5) {$\cong$};\node (c_5) at (5.5,1) {$\cong$};
\node (p1) at (3.5,3.4) {\scriptsize{$\pi^*{}^{-1}_{(X,Z,Y,W)}$}};\node (p1') at (3.5,3.6){$\Ua$};
\node (h1) at (1.5,2.4) {\scriptsize{$\gotikh_{2(X,Y,Z)}W$}};\node (h1') at (1.5,2.6){$\Ua$};
\node (h2) at (0.5,0.85) {\scriptsize{$\gotikh_{1(XY,Z,W)}$}};\node (h2') at (0.5,1){$\Ra$};
\node (p2) at (2.5,1.4) {\scriptsize{$\pi^*{}_{(Z,X,Y,W)}$}};\node (p2') at (2.5,1.6){$\Ra$};
\node (h3) at (2.5,0.4) {\scriptsize{$Z\gotikh_{2(X,Y,W)}$}};\node (h3') at (2.5,0.6){$\Ra$};
\node (p3) at (4.75,0.4) {\scriptsize{$\pi^*{}^{-1}_{(Z,X,W,Y)}$}};\node (p3') at (4.75,0.6){$\Da$};
\path[->, font=\scriptsize]
(1) edge (2)(1) edge (4)(2) edge (3)(2) edge (5)(3) edge (7)(4) edge (5)(4) edge (8)(5) edge (6)(6) edge (7)(6) edge (10)(7) edge (11)(7) edge (12)(8) edge (9)(8) edge (13)(9) edge (10)(9) edge (14)(10) edge (11)(11) edge (15)(11) edge (17)(12) edge (17)(12) edge (18)(13) edge (19)(14) edge (20)(15) edge (14)(15) edge (16)(16) edge (22)(17) edge (16)(17) edge (24)(18) edge (24)(19) edge (20)(20) edge (21)(22) edge (21)(23) edge (22)(24) edge (23);
\end{tikzpicture}
\end{equation}
is equal to the pasting of the 2-arrows in the diagram
\begin{equation}\label{diagram:hexagone3bis}
\begin{tikzpicture}[baseline=(current bounding box.center), descr/.style={fill=white,inner sep=2.5pt}, scale=2.2]
\node (1) at (1,4) {\scriptsize{$X((YZ)W)$}};
\node (2) at (2,4) {\scriptsize{$X((ZY)W)$}};
\node (3) at (3,4) {\scriptsize{$X(Z(YW))$}};
\node (4) at (0,4) {\scriptsize{$(X(YZ))W$}};
\node (5) at (4,4) {\scriptsize{$(XZ)(YW)$}};
\node (6) at (0,3) {\scriptsize{$((XY)Z)W$}};
\node (7) at (1,3) {\scriptsize{$X(Y(ZW))$}};
\node (8) at (2,3) {\scriptsize{$X((ZW)Y)$}};
\node (9) at (3,3) {\scriptsize{$X(Z(WY))$}};
\node (10) at (4,3) {\scriptsize{$(XZ)(WY)$}};
\node (11) at (0,2) {\scriptsize{$(XY)(ZW)$}};
\node (12) at (2,2) {\scriptsize{$(X(ZW))Y$}};
\node (13) at (4,2) {\scriptsize{$((XZ)W)Y$}};
\node (14) at (0,1) {\scriptsize{$(ZW)(XY)$}};
\node (15) at (1,1) {\scriptsize{$((ZW)X)Y$}};
\node (16) at (2,1) {\scriptsize{$(Z(WX))Y$}};
\node (17) at (4,1) {\scriptsize{$((ZX)W)Y$}};
\node (18) at (0,0) {\scriptsize{$Z(W(XY))$}};
\node (19) at (2,0) {\scriptsize{$Z((WX)Y)$}};
\node (20) at (3,0) {\scriptsize{$Z((XW)Y)$}};
\node (21) at (4,0) {\scriptsize{$(Z(XW))Y$}};
\node (c_1) at (3.5,3.5) {$\cong$};\node (c_2) at (2.5,0.4) {$\cong$};
\node (p1) at (0.5,3.4) {\scriptsize{$\pi^*{}^{-1}_{(X,Y,Z,W)}$}};\node (p1') at (0.5,3.6){$\Ra$};
\node (h1) at (2,3.4) {\scriptsize{$X\gotikh_{1(Y,Z,W)}$}};\node (h1') at (2,3.6){$\Ua$};
\node (h2) at (1,1.9) {\scriptsize{$X\gotikh_{2(X,Y,ZW)}$}};\node (h2') at (1,2.1){$\Ra$};
\node (p2) at (3,2.4) {\scriptsize{$\pi^*{}^{-1}_{(X,Z,W,Y)}$}};\node (p2') at (3,2.6){$\Ra$};
\node (h3) at (3,1.1) {\scriptsize{$\gotikh_{1(X,Z,W)}$Y}};\node (h3') at (3,1.3){$\Ra$};
\node (p3) at (1,0.4) {\scriptsize{$\pi^*{}^{-1}_{(Z,W,X,Y)}$}};\node (p3') at (1,0.6){$\Ua$};
\path[->, font=\scriptsize]
(1) edge (2)(1) edge (4)(1) edge (7)(2) edge (3)(3) edge (5)(3) edge (9)(4) edge (6)(5) edge (10)(6) edge (11)(7) edge (8)(7) edge (11)(8) edge (9)(8) edge (12)(9) edge (10)(10) edge (13)(11) edge (14)(12) edge (15)(13) edge (12)(13) edge (17)(14) edge (18)(14) edge (15)(15) edge (16)(16) edge (19)(17) edge (21)(18) edge (19)(20) edge (19)(21) edge (20)(21) edge (16);
\end{tikzpicture}
\end{equation}

Moreover the modifications $\gotikh_1$ and $\gotikh_2$ are compatible with each other under the above comparison, i.e. the pasting of the 2-arrows in the diagram below, denoted by $\gotikh_1 \square \gotikh_2$, is the identity
\begin{equation}\label{diagram:hexagone1_vs_hexagone2}
\begin{tikzpicture}[baseline=(current bounding box.center), descr/.style={fill=white,inner sep=2.5pt}, scale=1.75]
\node (1) at (0,0) {$(XY)Z$};\node (2) at (1,1) {$X(YZ)$};\node (3) at (2,1) {$(YZ)X$};\node (4) at (1,-1) {$(YX)Z$};\node (5) at (2,-1) {$Y(XZ)$};\node (6) at (3,0) {$Y(ZX)$};\node (7) at (4,1) {$(YZ)X$};\node (8) at (5,1) {$X(YZ)$};\node (9) at (4,-1) {$Y(XZ)$};\node (10) at (5,-1) {$(YX)Z$};\node (11) at (6,0) {$(XY)Z$};
\draw[->,rounded corners](0,0.15) -- (0,2.5) -- (6,2.5) -- (6,0.15);
\draw[->,rounded corners](1,1.15) -- (1,1.75) -- (5,1.75) -- (5,1.15);
\draw[->,rounded corners](0,-0.15) -- (0,-2.5) -- (6,-2.5) -- (6,-0.15);
\draw[->,rounded corners](1,-1.15) -- (1,-1.75) -- (5,-1.75) -- (5,-1.15);
\path[->, font=\scriptsize]
(1) edge (2)(1) edge (4)(2) edge (3)(4) edge (5)(3) edge (6)(5) edge (6)(6) edge (7)(6) edge (9)(7) edge (8)(9) edge (10)(8) edge (11)(10) edge (11)(3) edge (7)(5) edge (9);
\node (x) at (1.5,0) {\scriptsize{$\kesir{\mbox{$\Da$}}{\gotikh_{1(X,Y,Z)}}$}};
\node (x) at (4.5,0) {\scriptsize{$\kesir{\mbox{$\Da$}}{\gotikh_{2(Y,Z,X)}}$}};
\node (x) at (3,-0.6) {\scriptsize{$\kesir{\mbox{$\Da$}}{Y\zeta^{-1}_{(X,Z)}}$}};
\node (x) at (3,1.4) {\scriptsize{$\kesir{\mbox{$\Da$}}{\zeta_{(X,YZ)}}$}};
\node (x) at (3,-2.2) {\scriptsize{$\kesir{\mbox{$\Da$}}{\zeta^{-1}_{(X,Y)}Z}$}};
\node (x) at (3,2.2) {\scriptsize{$\simeq$}};
\node (x) at (3,-1.4) {\scriptsize{$\simeq$}};
\node (x) at (3,0.6) {\scriptsize{$\simeq$}};
\end{tikzpicture}
\end{equation}
and an analogous pasting of 2-arrows, denoted by $\gotikh_2 \square \gotikh_1$, is the identity 
\begin{equation}\label{diagram:hexagone1_vs_hexagone2_bis}
\begin{tikzpicture}[baseline=(current bounding box.center), descr/.style={fill=white,inner sep=2.5pt}, scale=1.75]
\node (1) at (0,0) {$X(YZ)$};\node (2) at (1,1) {$(XY)Z$};\node (3) at (2,1) {$Z(XY)$};\node (4) at (1,-1) {$X(ZY)$};\node (5) at (2,-1) {$(XZ)Y$};\node (6) at (3,0) {$(ZX)Y$};\node (7) at (4,1) {$Z(XY)$};\node (8) at (5,1) {$(XY)Z$};\node (9) at (4,-1) {$(XZ)Y$};\node (10) at (5,-1) {$X(ZY)$};\node (11) at (6,0) {$X(YZ)$};
\draw[->,rounded corners](0,0.15) -- (0,2.5) -- (6,2.5) -- (6,0.15);
\draw[->,rounded corners](1,1.15) -- (1,1.75) -- (5,1.75) -- (5,1.15);
\draw[->,rounded corners](0,-0.15) -- (0,-2.5) -- (6,-2.5) -- (6,-0.15);
\draw[->,rounded corners](1,-1.15) -- (1,-1.75) -- (5,-1.75) -- (5,-1.15);
\path[->, font=\scriptsize]
(1) edge (2)(1) edge (4)(2) edge (3)(4) edge (5)(3) edge (6)(5) edge (6)(6) edge (7)(6) edge (9)(7) edge (8)(9) edge (10)(8) edge (11)(10) edge (11)(3) edge (7)(5) edge (9);
\node (x) at (1.5,0) {\scriptsize{$\kesir{\mbox{$\Da$}}{\gotikh_{2(X,Y,Z)}}$}};
\node (x) at (4.5,0) {\scriptsize{$\kesir{\mbox{$\Da$}}{\gotikh_{1(Z,X,Y)}}$}};
\node (x) at (3,-0.6) {\scriptsize{$\kesir{\mbox{$\Da$}}{\zeta^{-1}_{(X,Z)}Y}$}};
\node (x) at (3,1.4) {\scriptsize{$\kesir{\mbox{$\Da$}}{\zeta_{(XY,Z)}}$}};
\node (x) at (3,-2.2) {\scriptsize{$\kesir{\mbox{$\Da$}}{X\zeta^{-1}_{(Y,Z)}}$}};
\node (x) at (3,2.2) {\scriptsize{$\simeq$}};
\node (x) at (3,-1.4) {\scriptsize{$\simeq$}};
\node (x) at (3,0.6) {\scriptsize{$\simeq$}};
\end{tikzpicture}
\end{equation}
Finally using the terminology of Kapranov and Voevodsky in \cite{MR1278735}, we require that the 2-arrows defining the Z-systems coincide, i.e. for all $X,Y,Z \in \twoP(U)$ the following equation of 2-arrows holds

\begin{equation}\label{diagram:Z_system}
\begin{tikzpicture}[baseline=(current bounding box.center), descr/.style={fill=white,inner sep=2.5pt}, scale=1.75]
\node (1) at (1.5,1.5) {\scriptsize{$(XZ)Y$}};\node (2) at (1,1) {\scriptsize{$X(ZY)$}};\node (3) at (2,1) {\scriptsize{$(ZX)Y$}};\node (4) at (0.5,0.5) {\scriptsize{$X(YZ)$}};\node (5) at (2.5,0.5) {\scriptsize{$Z(XY)$}};\node (6) at (0,0) {\scriptsize{$(XY)Z$}};\node (7) at (3,0) {\scriptsize{$Z(YX)$}};\node (8) at (0.5,-0.5) {\scriptsize{$(YX)Z$}};\node (9) at (2.5,-0.5) {\scriptsize{$(ZY)X$}};\node (10) at (1,-1) {\scriptsize{$Y(XZ)$}};\node (11) at (2,-1) {\scriptsize{$(YZ)X$}};\node (12) at (1.5,-1.5) {\scriptsize{$Y(ZX)$}};
\node (0) at (3.5,0) {$=$};
\node (1') at (5.5,1.5) {\scriptsize{$(XZ)Y$}};\node (2') at (5,1) {\scriptsize{$X(ZY)$}};\node (3') at (6,1) {\scriptsize{$(ZX)Y$}};\node (4') at (4.5,0.5) {\scriptsize{$X(YZ)$}};\node (5') at (6.5,0.5) {\scriptsize{$Z(XY)$}};\node (6') at (4,0) {\scriptsize{$(XY)Z$}};\node (7') at (7,0) {\scriptsize{$Z(YX)$}};\node (8') at (4.5,-0.5) {\scriptsize{$(YX)Z$}};\node (9') at (6.5,-0.5) {\scriptsize{$(ZY)X$}};\node (10') at (5,-1) {\scriptsize{$Y(XZ)$}};\node (11') at (6,-1) {\scriptsize{$(YZ)X$}};\node (12') at (5.5,-1.5) {\scriptsize{$Y(ZX)$}};
\node (X) at (1.5,0) {\scriptsize{$\cong$}};
\node (x) at (5.5,0) {\scriptsize{$\cong$}};
\node (Y) at (1.9,0.65) {\rotatebox{225}{$\Ra$}};\node (Y') at (2.08,0.33) {\scriptsize{$\gotikh_{1(X,Z,Y)}^{-1}$}};
\node (Z) at (1.1,-0.5) {\rotatebox{225}{$\Ra$}};\node (Z') at (1.25,-0.67) {\scriptsize{$\gotikh_{1(X,Y,Z)}$}};
\node (y) at (5.5,0.5) {\rotatebox{315}{$\Ra$}};\node (y') at (5.65,0.75) {\scriptsize{$\gotikh_{2(X,Y,Z)}^{-1}$}};
\node (z) at (5.7,-0.7) {\rotatebox{315}{$\Ra$}};\node (z') at (5.8,-0.45) {\scriptsize{$\gotikh_{2(Y,X,Z)}$}};
\path[->, font=\scriptsize]
(1) edge (3)(2) edge (1)(2) edge (9)(3) edge (5)(4) edge (2)(4) edge (11)(5) edge (7)(6) edge (8)(6) edge (4)(8) edge (10)(9) edge (7)(10) edge (12)(11) edge (9)(12) edge (11)(1') edge (3')(2') edge (1')(5') edge (7')(6') edge (5')(6') edge (4')(6') edge (8')(3') edge (5')(4') edge (2')(8') edge (7')(8') edge (10')(9') edge (7')(10') edge (12')(11') edge (9')(12') edge (11');
\end{tikzpicture}
\end{equation}
\item the modification $\eta$ satisfies the following two compatibility conditions:
the first one is that $\eta \ast \eta=\zeta$, the second one is that for all $X,Y \in \twoP(U)$ there is an additive relation between $\eta_X$,$\eta_Y$ and $\eta_{X Y}$, i.e. $\eta_{XY}$ is equal to the pasting of the 2-arrows in the following diagram

\begin{equation}\label{diagram:additivity_of_braiding}
\begin{tikzpicture}[baseline=(current bounding box.center), descr/.style={fill=white,inner sep=2.5pt}, scale=2]
\node (1) at (1.5,0.5) {\scriptsize{$X((XY)Y)$}};
\node (2) at (3.5,0.5) {\scriptsize{$(X(XY))Y$}};
\node (3) at (1,1) {\scriptsize{$X(Y(XY))$}};
\node (4) at (2,1) {\scriptsize{$X(X(YY))$}};
\node (5) at (3,1) {\scriptsize{$((XX)Y)Y$}};
\node (6) at (4,1) {\scriptsize{$((XY)X)Y$}};
\node (7) at (2.5,1.5) {\scriptsize{$(XY)(XY)$}};
\node (8) at (1,2) {\scriptsize{$X((YX)Y)$}};
\node (9) at (2,2) {\scriptsize{$X(X(YY))$}};
\node (10) at (3,2) {\scriptsize{$((XX)Y)Y$}};
\node (11) at (4,2) {\scriptsize{$(X(YX))Y$}};
\node (12) at (1.5,2.5) {\scriptsize{$X((XY)Y)$}};
\node (13) at (3.5,2.5) {\scriptsize{$(X(XY)Y$}};
\node (0) at (4.5,1.5) {\scriptsize{$=$}};
\node (1') at (5.5,0.5) {\scriptsize{$X((XY)Y)$}};
\node (2') at (7.5,0.5) {\scriptsize{$(X(XY))Y$}};
\node (3') at (5,1) {\scriptsize{$X(Y(XY))$}};
\node (4') at (6,1) {\scriptsize{$(XY)(XY)$}};
\node (5') at (7,1) {\scriptsize{$(XY)(XY)$}};
\node (6') at (8,1) {\scriptsize{$((XY)X)Y$}};
\node (7') at (5,2) {\scriptsize{$X((YX)Y)$}};
\node (8') at (8,2) {\scriptsize{$(X(YX))Y$}};
\node (9') at (6.5,2) {\scriptsize{$(X(YX))Y$}};
\node (10') at (5.5,2.5) {\scriptsize{$X((XY)Y)$}};
\node (11') at (7.5,2.5) {\scriptsize{$(X(XY)Y$}};
\node (c_1) at (2.08,1.5) {\scriptsize{$\cong$}}; \node (c_2) at (2.92,1.5) {\scriptsize{$\cong$}}; \node (c_3) at (6,2.2) {\scriptsize{$\cong$}};
\node (a) at (1.88,1.55) {$\Ra$};\node (a') at (1.88,1.4) {\scriptsize{$\eta_{Y}^{-1}$}};
\node (b) at (3.15,1.55) {$\Ra$};\node (b') at (3.15,1.4) {\scriptsize{$\eta_{X}$}};
\node (c) at (1.4,1.5) {\rotatebox{45}{$\Ra$}};\node (c') at (1.4,1.3) {\scriptsize{$X\gotikh_{1(Y,X,Y)}$}};
\node (d) at (3.65,1.5) {\rotatebox{135}{$\Ra$}};\node (d') at (3.65,1.3) {\scriptsize{$\gotikh_{1(X,X,Y)}Y$}};
\node (e) at (2.5,1) {$\Ua$};\node (e') at (2.5,0.8) {\scriptsize{$\pi^*{}^{-1}_{(X,X,Y,Y)}$}};
\node (e) at (2.5,2) {$\Ua$};\node (e') at (2.5,2.2) {\scriptsize{$\pi^*_{(X,X,Y,Y)}$}};
\node (d) at (6.5,0.8) {$\Ua$};\node (d') at (6.5,0.7) {\scriptsize{$\gotikh^{-1}_{2(X,X,XY)}$}};
\node (d) at (6.3,1.2) {$\Ua$};\node (d') at (6.6,1.2) {\scriptsize{$\eta^{-1}_{(XY)}$}};
\node (d) at (6.5,1.7) {$\Ua$};\node (d') at (6.5,1.55) {\scriptsize{$\pi^{*}_{(X,Y,X,Y)}$}};
\node (c_3) at (7.2,2.2){$\Ua$};\node (c_3) at (7.5,2.2) {\scriptsize{$\zeta_{(Y,X)}$}};
\path[->, font=\scriptsize]
(1) edge (2)(1) edge (4)(2) edge (6)(3) edge (1)(4) edge (7)(5) edge (2)(5) edge[bend right=43]  (10) (5) edge (10)(6) edge (11)(7) edge (5)(7) edge (10)(8) edge (3)(8) edge (12)(9) edge (4)(9) edge[bend right=43] (4)(9) edge (7)(10) edge (13)(12) edge (9)(12) edge (13)(13) edge (11)(1') edge (2')(2') edge (6')(3') edge (1')(3') edge (4')(4') edge (5')(4') edge[bend left=55] (5')(5') edge(6') (6') edge (8')(7') edge (9')(7') edge (3')(7') edge (10')(9') edge (8')(9') edge (11')(10') edge (11')(11') edge (8');
\end{tikzpicture}
\end{equation}
 \end{enumerate}

Picard $\ES$-2-stacks over $\ES$ form a 3-category $2\PICARD$ whose objects are Picard $\ES$-2-stacks and whose hom-2-groupoid consists of additive 2-functors, morphisms of additive 2-functors, and modifications of morphisms of additive 2-functors (see \cite[\S 3]{beta}).

The automorphisms $\Aut(e)$ of the neutral object of a Picard $\ES$-2-stack form a Picard $\ES$-stack. The homotopy groups $\pi_i(\twoP)$ of a Picard $\ES$-2-stack $\twoP$ are
\begin{itemize}
	\item $ \pi_0(\twoP)$ which is the sheafification of the pre-sheaf which associates, to each object $U$ of $\ES$, the group of equivalence classes of objects of $\twoP(U)$;
	\item $\pi_1(\twoP) =\pi_0 (\Aut(e))$, with $\pi_0 (\Aut(e))$ the sheafification of the pre-sheaf which associates, to each object $U$ of $\ES$, the group of isomorphism classes of objects of $\Aut(e)(U)$;
	\item $\pi_2(\twoP) =\pi_1 (\Aut(e))$, with  $\pi_1(\Aut(e))$ the sheaf of automorphisms of the neutral object of $\Aut(e)$.
\end{itemize}

We will denote by $\mathbf{0}$ the Picard $\ES$-2-stack whose only object is the neutral object and whose only 1- and 2-arrows are the identities. The complex $[\mathbf{0}]^{\flat\flat}$ of $\twocatD$ corresponding to the Picard $\ES$-2-stack $\mathbf{0}$ via the equivalence of categories $2\st^{\flat\flat}$ (\ref{intro:2st_flat_flat}) is $\mathbf{E}=[\mathbf{e} \stackrel{id_\mathbf{e}}{\rightarrow}  \mathbf{e} \stackrel{id_\mathbf{e}}{\rightarrow} \mathbf{e}]$ with $\mathbf{e}$ the final object of the category of abelian sheaves on $\ES$.

\section{The 3-category $\TORS(\twoG)$ of $\twoG$-torsors}

In this section, we categorify the notion of $\oneG$-torsors where $\oneG$ is a gr-$\ES$-stack (see  \cite{MR1086889}). We define in detail the 3-category of $\twoG$-torsors where $\twoG$ is a gr-$\ES$-2-stack. At the end of the section, using the triequivalence (\ref{intro:2st}), we give without details a description of how to define the notion of torsor in terms of length 3-complexes of abelian sheaves.

{\subsection{Geometric Case}
As in Section \ref{section:recall_on_Picard_2_stacks}, in the following definitions $U$ will denote an object of the site $\ES$ and in the diagrams involving 2-arrows, we will put the symbol $\cong $ in the cells which commute up to a modification of $\ES$-2-stacks coming from the group like structure.

Let $\twoG=(\twoG,\otimes,\ass,\pi)$ be a gr-$\ES$-2-stack. For simplicity instead of $g_1 \otimes g_2 $ we will write just $g_1g_2$ for all $ g_1,g_2 \in \twoG(U)$. The equivalences of $\ES$-2-stacks $g \otimes -: \twoG \rightarrow \twoG$ and $-\otimes g : \twoG \rightarrow \twoG$ imply that any gr-$\ES$-2-stack admits a global neutral object $1_{\twoG}$ (denoted simply by 1) endowed with two natural 2-transformations of $\ES$-2-stacks $\gotikl: e \otimes - \Ra \id$ and $\gotikr: -\otimes e \Ra \id$, which express the left and the right unit constraints, and which satisfy some higher compatibility conditions (see \cite{MR2342833}).
  
\begin{definition}\label{def:right_torsor_over_a_group_like_2_stack}
A \emph{right $\twoG$-torsor} is given by a collection $\twoP=(\twoP,m,\mu,\Theta)$ where
\begin{itemize}
\item $\twoP$ is an $\ES$-2-stack;
\item $m: \twoP \times \twoG \ra \twoP$ is a morphism of $\ES$-2-stacks, called the \emph{action of $\twoG$ on $\twoP$}. For simplicity instead of $m(p,g)$ we write just $p.g$ for any $(p,g) \in \twoP \times \twoG(U)$;
\item $\mu: m \circ (\id_\twoP \times \otimes) \Ra m \circ (m \times \id_\twoG)$ is a natural 2-transformation of $\ES$-2-stacks whose component at $(p,g_1,g_2) \in \twoP \times \twoG^2(U)$ is the 1-arrow $\mu_{(p,g_1,g_2)}:p.(g_1g_2) \ra (p.g_1).g_2$ of $\twoP(U)$ and which expresses the compatibility between the group law $\otimes$ of $\twoG$ and the action $m$ of $\twoG$ on $\twoP$;
\item $\Theta$ is a modification of $\ES$-2-stacks whose component at $(p,g_1,g_2,g_3) \in \twoP \times \twoG^3(U)$ is the 2-arrow
\begin{equation*}\label{diagram:right_G_torsor_1}
\xymatrix@!=0.5cm{ &p.((g_1g_2)g_3)\ar[dl]_{\mu_{(p,g_1g_2,g_3)}} \ar[dr]^{p.\ass_{(g_1,g_2,g_3)}} \ar@{}[dd]|(0.6){\kesir{\mbox{$\La$}}{\Theta_{(p,g_1,g_2,g_3)}}}&\\
**[l](p.  (g_1g_2)).g_3 \ar[d]_{\mu_{(p,g_1,g_2)}g_3} & & **[r]p.(g_1(g_2g_3)) \ar[d]^{\mu_{(p,g_1,g_2g_3)}}\\
**[l]((p. g_1).g_2).g_3 && **[r](p.g_1).(g_2g_3) \ar[ll]^{\mu_{(p.g_1,g_2,g_3)}}}
\end{equation*}
and which expresses the obstruction to the compatibility between the natural 2-transformation $\mu$ and the associativity $\ass$ underlying $\twoG$ (i.e. the obstruction to the pentagonal axiom);
\end{itemize}
such that the following conditions are satisfied:
\begin{itemize}
\item $\twoP$ is locally equivalent to $\twoG$, i.e. $(m,\pr_{\twoP}): \twoP \times \twoG \ra \twoP \times \twoP$ is an equivalence of $\ES$-2-stacks (here $\pr_{\twoP}:\twoP \times \twoG \ra \twoP$ denotes the projection to $\twoP$);
\item $\twoP$ is locally not empty, i.e. it exists a covering sieve $R$ of the site $\ES$ such that for any object $V$ of $R$ the 2-category $\twoP(V)$ is not empty;
\item the modification $\Theta$ is coherent, i.e. it satisfies the coherence axiom of Stasheff's polytope (\ref{diagram:stasheff});
\item the restriction of $m$ to $\twoP \times 1_\twoG$ is equivalent to the identity, i.e. there exists a natural 2-transformation of $\ES$-2-stacks $\gotikd : m_{|(\twoP \times 1_{\twoG})} \Ra \id_{\twoP}$ whose component at $(p,1_{\twoG}) \in \twoP\times 1_{\twoG}(U)$ is the 1-arrow $\gotikd_{p}: p.1_{\twoG} \ra p$ of $\twoP(U)$. We require also the existence of two modifications of $\ES$-2-stacks $\gotikR$ and $\gotikL$, which express the obstruction to the compatibility between the restriction of $m$ to $\twoP \times 1_\twoG$ and the restrictions of $\mu$ to $\twoP \times \twoG \times 1_\twoG$ and $\twoP \times 1_\twoG \times \twoG$ respectively, and which satisfy three compatibility conditions: the first one is  between $\gotikL$ and $\gotikR$, the second one is between $\Theta$ and $\gotikR$, and the third one is between $\Theta$ and $\gotikL$. We left to the reader the explicit description of the modifications $\gotikR$ and $\gotikL$ with their compatibility conditions. 
\end{itemize}
\end{definition}

\begin{definition}\label{def:1_morphisms_of_right_torsors}
A \emph{morphism of right $\twoG$-torsors} from $\twoP=(\twoP,m_\twoP,\mu_\twoP,\Theta_\twoP)$ to $\twoQ=(\twoQ,m_\twoQ,\mu_\twoQ,\Theta_\twoQ)$ is given by the triplet $(F,\gamma,\Psi)$ where
\begin{itemize}
\item $F:\twoP \ra \twoQ$ is a morphism of $\ES$-2-stacks;
\item $\gamma: m_\twoQ \circ (F \times \id_{\twoG}) \Ra F \circ m_\twoP $ is a natural 2-transformation of $\ES$-2-stacks whose component at $(p,g) \in \twoP \times \twoG(U)$ is the 1-arrow $\gamma_{(p,g)}:Fp.g \ra F(p.g)$ (for simplicity we use the notation $.$ for both actions of $\twoG$ on $\twoP$ and on $\twoQ$) and which expresses the compatibility between the morphism of $\ES$-2-stacks $F$ and the two actions $m_\twoP$ and $m_\twoQ$ of $\twoG$ on $\twoP$ and on $\twoQ$;
\item $\Psi$ is a modification of $\ES$-2-stacks whose component at $(p,g_1,g_2) \in \twoP \times \twoG^2(U)$ is the 2-arrow
\begin{equation*}\label{diagram:1-morphism_of_right_G_torsor_1}
\xymatrix@!=0.5cm{ &Fp.(g_1g_2) \ar[dl]_{\gamma_{(p,g_1g_2)}} \ar@{}[dd]|(0.6){\kesir{\mbox{$\La$}}{\Psi_{(p,g_1,g_2)}}} \ar[dr]^{\mu_{\twoQ(Fp,g_1,g_2)}}&\\
**[l]F(p.(g_1g_2)) \ar[d]_{F(\mu_{\twoP(p,g_1,g_2)})} && **[r](Fp.g_1).g_2 \ar[d]^{\gamma_{(p,g_1)}.g_2}\\
**[l]F((p.g_1).g_2) && **[r]F(p.g_1).g_2 \ar[ll]^{\gamma_{(p.g_1,g_2)}}}
\end{equation*}
\end{itemize}
and which expresses the obstruction to the compatibility between the natural 2-transformation $\gamma$ and the natural 2-transformations $\mu_\twoP$ and $\mu_\twoQ$ underlying $\twoP$ and $\twoQ$. Moreover we require that the modification $\Psi$ is compatible with the modifications $\Theta_\twoP$ and $\Theta_\twoQ$, i.e. we have the following equation of 2-arrows
\[ F(\Theta_{\twoP,(p,g_1,g_2,g_3)}) * \Psi_{(p.g_1,g_2,g_3)}* 
\mu^{-1}_{(\gamma_{(p,g_1)},g_2,g_3)}* \Psi_{(p,g_1,g_2g_3)} * 
\gamma_{(p,\ass_{(g_1,g_2,g_3)})}= \]
\[ \Psi_{(p,g_1g_2,g_3)} * \gamma_{(\mu_{(p,g_1,g_2)},g_3)} * \Psi_{(p,g_1,g_2)}.g_3 * \Theta_{\twoQ(Fp,g_1,g_2,g_3)} .\]
\end{definition}

Let $(F,\gamma_F,\Psi_F)$ and $(G,\gamma_G,\Psi_G)$ be two morphisms of right $\twoG$-torsors from $\twoP$ to $\twoQ$.

\begin{definition}\label{def:2_morphisms_of_right_torsors}
 A \emph{2-morphism of right $\twoG$-torsors} from $(F,\gamma_F,\Psi_F)$ to $(G,\gamma_G,\Psi_G)$ is given by the pair $(\alpha,\Phi)$ where
\begin{itemize}
\item $\alpha:F \Ra G$ is a natural 2-transformation of $\ES$-2-stacks,
\item $\Phi$ is a modification of $\ES$-2-stacks whose components at $(p,g) \in \twoP \times \twoG(U)$ is the 2-arrow 
\begin{equation*}\label{diagram:2_morphisms_of_right_torsors_1}
\begin{tabular}{c}
\xymatrix@!=1.25cm{Fp.g \ar[r]^{\gamma_{F_{(p,g)}}} \ar[d]_{\alpha_p.g} \ar@{}[dr]|{\kesir{\mbox{$\Da$}}{\Phi_{(p,g)}}}& F(p.g) \ar[d]^{\alpha_{p.g}} \\Gp.g \ar[r]_{\gamma_{G_{(p,g)}}}& G(p.g)}
\end{tabular}
\end{equation*}
\end{itemize}
and which expresses the obstruction to the compatibility between the natural 2-transformation $\alpha$ and the natural 2-transformations $\gamma_F$ and $\gamma_G$ underlying $F$ and $G$. We require that the modification $\Phi$ is compatible with the modifications $\Psi_F$ and $\Psi_G$, i.e. we have the following equation of 2-arrows
\[\Phi_{(p,g_1g_2)}*\alpha_{\mu_{(p,g_1,g_2)}}*\Psi_{{F}_{(p,g_1,g_2)}}= \Psi_{G_{{(p,g_1,g_2)}}} * \Phi_{(p.g_1,g_2)}* \Phi_{(p,g_1)}.g_2 *
\mu^{-1}_{(\alpha_p,g_1,g_2)}.\]
\end{definition}

Let $(\alpha,\Phi_\alpha)$ and $(\beta, \Phi_\beta)$ be two 2-morphisms of right $\twoG$-torsors from $(F,\gamma_F,\Psi_F): \twoP \ra \twoQ$ to $(G,\gamma_G,\Psi_G): \twoP \ra \twoQ$.

\begin{definition}\label{def:3_morphisms_of_right_torsors}
A \emph{3-morphism of right $\twoG$-torsors} from $(\alpha,\Phi_\alpha)$ to $(\beta, \Phi_\beta)$ is given by a modification of $\ES$-2-stacks $\Delta:\alpha \Rra \beta$ which is compatible with the modifications $\Phi_\alpha$ and $\Phi_\beta$, i.e. $\Phi_{\beta(p,g)}*\Delta_{p.g}=\Delta_{p}.g*\Phi_{\alpha(p,g)}.$
\end{definition}

If the gr-$\ES$-2-stack $\twoG$ acts on the left side instead of the right side, we get the definitions of left $\twoG$-torsor, morphism of left $\twoG$-torsors, 2-morphism of left $\twoG$-torsors and 3-morphism of left $\twoG$-torsors. 

\begin{definition}\label{def:torsor}
A \emph{$\twoG$-torsor} $\twoP=(\twoP,m^l,m^r,\mu^l,\mu^r,\Theta^l,\Theta^r, \kappa, \Omega^r,\Omega^l)$ consists of an ${\ES}$-2-stack $\twoP$ endowed with a structure of left $\twoG$-torsor $(\twoP,m^l,\mu^l,\Theta^l)$ and with a structure of right $\twoG$-torsor $(\twoP,m^r,\mu^r,\Theta^r)$ which are compatible with each other. This compatibility is given by a natural 2-transformation of $\ES$-2-stacks $\kappa: m^l \circ (\id_{\twoG} \times m^r)  \Ra  m^r \circ (m^l \times \id_{\twoG})$ whose component at $(g_1,p,g_2) \in \twoG \times \twoP \times \twoG(U)$ is the 1-arrow $\kappa_{(g_1,p,g_2)} : g_1.(p.g_2) \ra (g_1.p).g_2$. We require also the existence of two modifications of $\ES$-2-stacks, $\Omega^l$ whose component at $(g_1,g_2,p,g_3) \in \twoG^2 \times \twoP \times \twoG(U)$ is the 2-arrow 
\begin{equation*}\label{diagram:G_torsor_1}
\begin{tabular}{c}
\xymatrix@!=0.5cm{**[l](g_1g_2).(p.g_3) \ar[rr]^{\kappa_{(g_1g_2,p,g_3)}} \ar[d]_{\mu^l_{{(g_1,g_2,p.g_3)}}} &\ar@{}[dd]|{\kesir{\mbox{$\La$}}{\Omega^l_{(g_1,g_2,p,g_3)}}}& **[r]((g_1g_2).p).g_3 \ar[d]^{\mu^l_{(g_1,g_2,p)}.g_3} \\
**[l]g_1.(g_2.(p.g_3)) \ar[dr]_{g_1.\kappa_{(g_2,p,g_3)}} && **[r](g_1.(g_2.p)).g_3\\
&g_1.((g_2.p).g_3) \ar[ur]_{\kappa_{(g_1,g_2.p,g_3)}}&}
\end{tabular}
\end{equation*}
and $\Omega^r$ whose component at $(g_1,p,g_2,g_3) \in \twoG \times \twoP \times \twoG^2(U)$ is the 2-arrow
\begin{equation*}\label{diagram:G_torsor_2}
\begin{tabular}{c}
\xymatrix@!=0.5cm{**[l]g_1.(p.(g_2g_3)) \ar[rr]^{\kappa_{(g_1,p,g_2g_3)}} \ar[d]_{g_1.\mu^r_{(p,g_2,g_3)}} &\ar@{}[dd]|{\kesir{\mbox{$\La$}}{\Omega^r_{(g_1,p,g_2,g_3)}}}& **[r](g_1.p).(g_2g_3) \ar[d]^{\mu^r_{(g_1.p,g_2,g_3)}} \\
**[l]g_1.((p.g_2).g_3) \ar[dr]_{\kappa_{(g_1,p.g_2,g_3)}} && **[r]((g_1.p).g_2).g_3  \\
&(g_1.(p.g_2)).g_3 \ar[ur]_{\kappa_{(g_1,p,g_2)}.g_3}&}
\end{tabular}
\end{equation*}
which express the obstruction to the compatibility between the natural 2-transformation $\kappa$ and the natural 2-transformations $\mu^l$ and $\mu^r$ respectively. Moreover $\Omega^r$ and $\Omega^l$ satisfy three compatibility conditions: the first one is between $\Omega^r$ and $\Theta^r$, the second one is between  $\Omega^l$ and $\Theta^l$, and the third one is between $\Omega^r$ and $\Omega^l$. 
\end{definition}

Any gr-$\ES$-2-stack $\twoG=(\twoG,\otimes,\ass,\pi)$ is a left $\twoG$-torsor and a right $\twoG$-torsor: the action of $\twoG$ on $\twoG$ is just the group law $\otimes $ of $\twoG,$ the natural 2-transformation $\mu$ is the associativity $\ass$ and the modification $\Theta$ is $\pi$. Any Picard $\ES$-2-stack $\twoG$ is a $\twoG$-torsor: in fact, the gr-structure underlying $\twoG$ furnishes the structures of left and right $\twoG$-torsor and the braiding implies that these two structures are compatible.

Let $\twoP=(\twoP,m^l_\twoP,m^r_\twoP,\mu^l_\twoP,\mu^r_\twoP,\Theta^l_\twoP,\Theta^r_\twoP, \kappa_\twoP, \Omega^r_\twoP,\Omega^l_\twoP)$ and $\twoQ=(\twoQ,m^l_\twoQ,m^r_\twoQ,\mu^l_\twoQ,\mu^r_\twoQ,\Theta^l_\twoQ,\Theta^r_\twoQ, \kappa_\twoQ, \Omega^r_\twoQ,\Omega^l_\twoQ)$ be two $\twoG$-torsors.

\begin{definition}\label{def:1_morphisms_of_torsors}
A \emph{morphism of $\twoG$-torsors} from $\twoP$ to $\twoQ$ consists of the collection  $(F,\gamma^l,\gamma^r,\Psi^l,\Psi^r,\Sigma)$ where 
\begin{itemize}
	\item $(F,\gamma^l,\Psi^l): (\twoP,m^l_\twoP,\mu^l_\twoP,\Theta^l_\twoP) \rightarrow (\twoQ,m^l_\twoQ,\mu^l_\twoQ,\Theta^l_\twoQ)$ and $(F,\gamma^r,\Psi^r): (\twoP,m^r_\twoP,\mu^r_\twoP,\Theta^r_\twoP) \rightarrow (\twoQ,m^r_\twoQ,\mu^r_\twoQ,\Theta^r_\twoQ) $ are morphisms of left and right $\twoG$-torsors respectively;
	\item $\Sigma$ is a modification of $\ES$-2-stacks whose component at $(g_1,p,g_2) \in \twoG \times \twoP \times \twoG (U)$ is the 2-arrow
	\[\Sigma_{(g_1,p,g_2)}: F(\kappa_{\twoP(g_1,p,g_2)}) \circ \gamma^l_{(p.g_2,g_1)} \circ g_1 . \gamma^r_{(p,g_2)} \Ra \gamma^r_{(g_1.p,g_2)} \circ \gamma^l_{(p,g_1)}. g_2 \circ \kappa_{\twoQ(g_1,Fp,g_2)}\]
	 and which expresses the obstruction to the compatibility between the natural 2-transformations $\gamma^l, \gamma^r, \kappa_\twoP$ and $\kappa_\twoQ$. Moreover we require that the modification $\Sigma$ is compatible with the modifications $\Psi^l, \Psi^r, \Omega^l$ and $\Omega^r$. We leave the explicit description of these compatibilities to the reader.
\end{itemize}
\end{definition}

Any morphism of $\twoG$-torsors $F:\twoP \ra \twoQ$ is an equivalence of $\ES$-2-stacks. Therefore,

\begin{definition}\label{def:equiv_of_torsors}
Two $\twoG$-torsors $\twoP$ and $\twoQ$ are \emph{equivalent as $\twoG$-torsors} if there exists a morphism of $\twoG$-torsors from $\twoP$ and $\twoQ$.
\end{definition}

Let $(F,\gamma^l_F,\gamma^r_F,\Psi^l_F,\Psi^r_F,\Sigma_F)$ and $(G,\gamma^l_G,\gamma^r_G,\Psi^l_G,\Psi^r_G,\Sigma_G)$ be two parallel morphisms of $\twoG$-torsors from $\twoP$ to $\twoQ$.

\begin{definition}\label{def:2_morphisms_of_torsors}
A \emph{2-morphism of $\twoG$-torsors} from $(F,\gamma^l_F,\gamma^r_F,\Psi^l_F,\Psi^r_F,\Sigma_F)$ to $(G,\gamma^l_G,\gamma^r_G,\Psi^l_G,\Psi^r_G,\Sigma_G)$ is given by the triplet $(\alpha,\Phi^l,\Phi^r)$ where $(\alpha,\Phi^l):(F,\gamma^l_F,\Psi^l_F)\Rightarrow (G,\gamma^l_G,\Psi^l_G) $ and $(\alpha,\Phi^r):(F,\gamma^r_F,\Psi^r_F)\Rightarrow (G,\gamma^r_G,\Psi^r_G) $ are 2-morphisms of left and right $\twoG$-torsors respectively. Moreover we require that the modifications $\Phi^l$ and $\Phi^r$ are compatible with the modifications $\Sigma_{F}$ and $\Sigma_{G},$ i.e. we have the following equation of 2-arrows
\[g_1.\Phi^r_{(p,g_2)} * \Phi^l_{(g_1,p.g_2)} * \alpha_{\kappa_{(g_1,p,g_2)}} * \Sigma_{F(g_1,p,g_2)} = \Sigma_{G(g_1,p,g_2)} * \Phi^r_{(g_1.p,g_2)} *
\Phi^l_{(g_1,p)}.g_2 * \kappa^{-1}_{(g_1,\alpha_p,g_2)} .\] 
\end{definition}
 
Let $(\alpha,\Phi^l_\alpha,\Phi^r_\alpha)$ and $(\beta,\Phi^l_\beta,\Phi^r_\beta)$ be two 2-morphisms of $\twoG$-torsors from $F$ to $G$.

\begin{definition}\label{def:3_morphisms_of_torsors}
A \emph{3-morphism of $\twoG$-torsors} from $(\alpha,\Phi^l_\alpha,\Phi^r_\alpha)$ to $(\beta,\Phi^l_\beta,\Phi^r_\beta)$ is given by a modification of  $\ES$-2-stacks $\Delta: \alpha \Rra \beta$ such that $\Delta :(\alpha,\Phi^l_\alpha) \Rra (\beta,\Phi^l_\beta)$ and $\Delta :(\alpha,\Phi^r_\alpha) \Rra (\beta,\Phi^r_\beta)$ are 3-morphisms of left and right $\twoG$-torsors respectively.
\end{definition}
 
\begin{def-prop}\label{def-prop:2-groupoid_of_torsors}
Let $\twoP$ and $\twoQ$ be $\twoG$-torsors. Then the 2-category $\HomTORS(\twoP,\twoQ)$ whose
\begin{itemize}
	\item objects are morphisms of $\twoG$-torsors from $\twoP$ to $\twoQ$ ,
	\item 1-arrows are 2-morphisms of $\twoG$-torsors,
	\item 2-arrows are 3-morphisms of $\twoG$-torsors,
\end{itemize}
is a 2-groupoid, called the 2-groupoid of morphisms of $\twoG$-torsors from $\twoP$ to $\twoQ$. 
\end{def-prop}

In Lemma \ref{lemma:HomTors(PP)isPicard} we show that $\HomTORS(\twoP,\twoP)$ is a Picard $\ES$-2-stack. In general we expect to have at least an $\ES$-2-stack structure on $\HomTORS(\twoP,\twoQ)$.

$\twoG$-torsors over $\ES$ form a 3-category $\TORS(\twoG)$ where the objects are $\twoG$-torsors and the hom-2-groupoid of two $\twoG$-torsors $\twoP$ and $\twoQ$ is $\HomTORS(\twoP,\twoQ).$

We define the sum of two $\twoG$-torsors $\twoP$ and $\twoQ$ as the fibered sum (or the push-down) of 
 $\twoP$ and $\twoQ$ under $\twoG$. In the context of torsors, the fibered sum is called the contracted product:

\begin{definition}\label{def:sum_G_torsors}
The \emph{contracted product $\twoP \wedge^{\twoG} \twoQ$ (or just $\twoP \wedge \twoQ$) of $\twoP$ and $\twoQ$} is the $\twoG$-torsor whose underlying $\ES$-2-stack is obtained by 2-stackyfying the following fibered 2-category in 2-groupoids $\twoD$: for any object $U$ of $\ES$,  
\begin{enumerate}
\item the objects of $\twoD(U)$ are the objects of the product $\twoP \times \twoQ(U)$, i.e. pairs $(p,q)$ with $p$ an object of $\twoP(U)$ and $q$ an object of $\twoQ(U)$;
\item a 1-arrow $(p_1,q_1) \ra (p_2,q_2)$ between two objects of $\twoD(U)$ is given by 
a triplet $(m,g,n)$ where $g$ is an object of $\twoG(U)$, $m: p_1.g \ra p_2$ is a 1-arrow in $\twoP(U)$ and $n: q_1 \ra g.q_2$ is a 1-arrow in $\twoQ(U)$;
\item a 2-arrow between two parallel 1-arrows $(m,g,n), (m',g',n'):(p_1,q_1) \ra (p_2,q_2)$ of $ \twoD(U)$ is given by an equivalence class of triplets $(\phi,l,\theta)$ with $l:g\ra g'$ a 1-arrow of $\twoG(U)$, $\phi: m' \circ p_1.l \Rightarrow m$ a 2-arrow of $\twoP(U)$ and $\theta: l .q_2 \circ n \Rightarrow n'$ a 2-arrow of $\twoQ(U)$. Two such triplets $(\phi,l,\theta)$ and $(\tilde{\phi},\tilde{l},\tilde{\theta})$ are equivalent if there exists a 2-arrow $\gamma: l \Ra \tilde{l}$ of $\twoG(U)$ such that $\tilde{\phi} * p_1.\gamma = \phi$ and $\gamma . q_2 * \tilde{\theta} = \theta$.
\end{enumerate}
\end{definition}

The contracted product of $\twoG$-torsors is endowed with a universal property similar to the one stated explicitly in \cite[Prop 10.1]{beta}.

\begin{proposition}\label{proposition:sumoftor}
Let $\twoG$ be a Picard $\ES$-2-stacks. The contracted product equips the set $\TORS^1(\twoG)$ of equivalence classes of $\twoG$-torsors with an abelian group law, where the neutral element is the equivalence class of the $\twoG$-torsor $\twoG$, and the inverse of the equivalence class of a $\twoG$-torsor $\twoP$ is the equivalence class of the $\mathrm{ad}(\twoP)$-torsor $\twoP$, with $\mathrm{ad}(\twoP)=\HomTORS(\twoP,\twoP)$ (recall that $\twoG$ and $\mathrm{ad}(\twoP)$ are equivalent via $g \ra (p \mapsto g . p)$).
\end{proposition}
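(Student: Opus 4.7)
The plan is to verify each group axiom directly on representatives, using the universal property of the contracted product (analogous to \cite[Prop 10.1]{beta}) to produce canonical equivalences of $\twoG$-torsors. First, well-definedness of $\wedge$ on equivalence classes is immediate: given morphisms of $\twoG$-torsors $\twoP \to \twoP'$ and $\twoQ \to \twoQ'$, the universal property produces a morphism $\twoP \wedge \twoQ \to \twoP' \wedge \twoQ'$, which is an equivalence because each factor is (every morphism of $\twoG$-torsors is an equivalence of $\ES$-2-stacks).

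Associativity $(\twoP \wedge \twoQ) \wedge \twoR \simeq \twoP \wedge (\twoQ \wedge \twoR)$ is obtained by constructing, on the underlying fibered 2-category $\twoD$ of Definition \ref{def:sum_G_torsors} (before 2-stackification), the canonical reassociation $((p,q),r) \mapsto (p,(q,r))$; the associativity constraint $\ass$ of $\twoG$, together with the modification $\pi$, provides the required coherence on triplets $(\phi,l,\theta)$ so that this map descends to the stackification. For the neutral element, the right action $m^r : \twoP \times \twoG \to \twoP$, together with the natural 2-transformation $\mu^r$, descends to a morphism of $\twoG$-torsors $\twoP \wedge \twoG \to \twoP$; this is an equivalence because $\twoP$ is locally equivalent to $\twoG$. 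Commutativity $\twoP \wedge \twoQ \simeq \twoQ \wedge \twoP$ uses the Picard structure: the assignment $(p,q) \mapsto (q,p)$ combined with the braiding $\comm$ of $\twoG$ yields a morphism of $\twoG$-torsors, and the modifications $\zeta, \gotikh_1, \gotikh_2, \eta$ provide exactly the higher coherences needed to make it compatible with $\kappa$, $\mu^l$ and $\mu^r$. Since we are working at the level of equivalence classes, the remaining Stasheff-type coherences automatically vanish.

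The construction of inverses is the most delicate part. The equivalence $\twoG \simeq \mathrm{ad}(\twoP) = \HomTORS(\twoP,\twoP)$ given by $g \mapsto (p \mapsto g.p)$ is itself guaranteed by the local equivalence of $\twoP$ with $\twoG$ together with the compatibility 2-transformation $\kappa$. Via this identification, $\twoP$ acquires a second $\twoG$-torsor structure, which we denote $\twoP^{\mathrm{op}}$. The plan is to exhibit an equivalence $\twoP \wedge \twoP^{\mathrm{op}} \simeq \twoG$ by constructing the morphism $(p_1,p_2) \mapsto \text{the unique } g \in \twoG \text{ such that } g.p_2 \simeq p_1$, where uniqueness is understood up to canonical 2-isomorphism provided by the local equivalence $(m^l, \pr_\twoP):\twoG \times \twoP \to \twoP \times \twoP$. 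One then verifies locally (on a cover trivializing $\twoP$) that this is a morphism of $\twoG$-torsors, and globalizes by descent using that $\TORS(\twoG)$ is defined via a stack condition.

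The main obstacle is the coherence bookkeeping required at the level of 1-arrows and 2-arrows of $\twoD$: each candidate map must be specified on triplets $(\phi,l,\theta)$ and on their equivalence classes, and one must check that the modifications $\Sigma$ entering Definition \ref{def:1_morphisms_of_torsors} can be built from the Picard data of $\twoG$ and the modifications $\Theta^l, \Theta^r, \Omega^l, \Omega^r$ of the torsors involved. Passing to equivalence classes eliminates the need to verify the higher cocycle identities on these $\Sigma$'s, which is what makes the argument tractable; nevertheless, producing the morphisms of $\twoG$-torsors in each case (rather than mere morphisms of $\ES$-2-stacks) requires a careful case-by-case use of the braiding $\comm$ and its coherence modifications $\zeta, \gotikh_1, \gotikh_2, \eta$.
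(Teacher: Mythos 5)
The paper does not actually prove this proposition (``We leave the proof of this proposition to the reader''), so there is no argument to diverge from; your outline is the standard one --- functoriality of $\wedge$ via its universal property, descent of the action map $m$ for the unit, the braiding $\comm$ for commutativity, and the division map $(p_1,p_2)\mapsto g$ with $g.p_2\simeq p_1$ for inverses --- and is exactly the 2-categorical analogue of the bitorsor argument used for Picard $\ES$-stacks in \cite{MR2995663}, hence correct in approach. The one point to be careful about is the inverse: the $\mathrm{ad}(\twoP)$-torsor structure on $\twoP$ must be transported to a $\twoG$-torsor structure with the left/right roles interchanged (which, via the braiding, amounts to twisting the action by inversion), since with the unmodified action your division map does not descend to the contracted product --- one checks $\twoP\wedge\twoP^{\mathrm{op}}\simeq\twoG$ and not $\twoP\wedge\twoP\simeq\twoG$.
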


\begin{definition}\label{definition:torsor_trivial}
A $\twoG$-torsor $\twoP$ is \emph{trivial} if $\twoP$ is globally equivalent as $\twoG$-torsor to $\twoG$ (recall that $\twoG$ is considered as a $\twoG$-torsor via its group law $\otimes: \twoG \times \twoG  \rightarrow \twoG$).
\end{definition}

In order to define the notion of $\twoG$-torsor over an $\ES$-2-stack, we need the definition of fibered product (or pull-back) for $\ES$-2-stacks. Let $\twoP, \twoQ$ and $\twoR$ be three $\ES$-2-stacks and consider two morphisms of $\ES$-2-stacks $F: \twoP \rightarrow \twoR$ and $G: \twoQ \rightarrow \twoR$.

\begin{definition}\label{def:fibered_product}
The \emph{fibered product of $\twoP$ and $\twoQ$ over $\twoR$} is the $\ES$-2-stack $\twoP \times_{\twoR} \twoQ$ defined as follows: for any object $U$ of $\ES$,
\begin{itemize}
\item an object of the 2-groupoid $(\twoP \times_{\twoR} \twoQ)(U)$ is a triple $(p,l,q)$ where $p$ is an object of $ \twoP(U)$, $q$ is an object of $ \twoQ(U)$ and $l:F p \ra G q$ is a 1-arrow in $\twoR(U)$;
\item a 1-arrow $(p_1,l_1,q_1) \ra (p_2,l_2,q_2)$ between two objects of $(\twoP \times_{\twoR} \twoQ)(U)$ is given by the triplet $(m,\alpha,n)$ where $m:p_1 \ra p_2$ and $n: q_1 \ra q_2$ are 1-arrows in $\twoP(U)$ and $\twoQ(U)$ respectively, and $\alpha \colon l_2 \circ Fm \Ra G n \circ l_1$ is a 2-arrow in $\twoR(U)$;
\item a 2-arrow between two parallel 1-arrows $(m,\alpha,n), (m',\alpha',n'):(p_1,l_1,q_1) \ra (p_2,l_2,q_2)$ of $(\twoP \times_{\twoR} \twoQ)(U)$ is given by the pair $(\theta,\phi)$ where $\theta:m \Ra m'$ and $\phi:n \Ra n'$ are 2-arrows in $\twoP(U)$ and $\twoQ(U)$ respectively, satisfying the equation $\alpha' \circ (l_2*F\theta) = (G\phi * l_1) \circ \alpha$ of 2-arrows.
\end{itemize}
\end{definition}

The fibered product $\twoP \times_\twoR \twoQ$ is also called the \emph{pull-back $F^*\twoQ$ of $\twoQ$ via $F:\twoP \ra \twoR$} or the \emph{pull-back $G^*\twoP$ of $\twoP$ via $G:\twoQ \ra \twoR$}. It satisfies a universal property similar to the one stated explicitly in \cite[\S4]{beta}. 

If $J: \twoE \ra \twoP$ is a morphism of $\ES$-2-stacks, the \emph{homotopy fiber} $\twoE_p$ of $\twoE$ over an object $p \in \twoP(U) $ (with $U$ an object of $\ES$) is the $\ES/U$-2-stack obtained as fibered product of $J:\twoE \ra \twoP$ and of the inclusion $p \ra \twoP$.

Let $\twoG$ be a gr-$\ES$-2-stack and let $\twoP$ be an $\ES$-2-stacks. Our next definition is inspired by the similar ones given in \cite[Expos\'e VII 1.1.2.1]{SGA7} and \cite[Def. 9.1]{Rousseau}.

\begin{definition}\label{def:torsorover2stack}
A \emph{$\twoG_\twoP$-torsor over $\twoP$ (or just $\twoG$-torsor over $\twoP$)} is an $\ES$-2-stack $\twoE$ endowed with a morphism of $\ES$-2-stacks $J:\twoE \ra \twoP$ so that for any object $U$ of $ \ES$ and for any $p \in \twoP(U)$, the homotopy fiber $\twoE_p$ over $p$ is a $\twoG(U)$-torsor (see Definition \ref{def:torsor}).
 \end{definition}

$\twoG_\twoP$-torsors over $\twoP$ form a 3-category, denoted $\TORS(\twoG_\twoP)$. 

Let $\twoP$ and $\twoR$ be two $\ES$-2-stacks and consider a morphism of $\ES$-2-stacks
$F: \twoR \rightarrow \twoP$. If $\twoQ$ is a $\twoG_\twoP$-torsor over $\twoP$, then the pull-back $F^*\twoQ$ of $\twoQ$ via $F: \twoR \rightarrow \twoP$ is a $\twoG_\twoR$-torsor over $\twoR$. In other words, the pull-back via $F: \twoR \rightarrow \twoP$ defines a 3-functor $F^*: \TORS(\twoG_\twoP) \longrightarrow \TORS(\twoG_\twoR).$

\subsection{Algebraic Case}

Let $G=[G^{-2} \ra G^{-1} \ra G^{0}]$ be a length 3 complex of sheaves of groups over $\ES$. We denote by $+: G \times G \ra G$ the morphism of complexes whose components are the operations on the groups $G^i$ for $i=-2,-1,0$.

\begin{definition}\label{def:right_torsor_via_fractions}
A \emph{right $G$-torsor} is given by a collection $P=(P,(q,M,p),(r,N,s),t)$ where
\begin{itemize}
\item $P=[P^{-2} \ra P^{-1} \ra P^{0}]$ is a length 3 complex of sheaves of sets;
\item $(q,M,p): P \times G \stackrel{q}{\la}M \stackrel{p}{\ra}P$ is a fraction, which we represent by $m:P \times G \ra G$;
\item $(r,N,s)$ is a 1-arrow from the composition of fractions $(q,M,p) \fcirc (\id_{P \times G \times G}, P \times G \times G, \id_{P} \times +)$ to the composition of the fractions $(q,M,p) \fcirc (q \times \id_G, M \times G, p \times \id_G)$ which can be depicted by the following commutative diagram
\begin{equation*}\label{diag:G-torsor-1}
\begin{tabular}{c}
\xymatrix{&& (P \times G^2)\times_{P \times G}M \ar[drr]^{p \circ \pr_2} \ar[dll]_{(\id_{P \times G \times G}) \circ \pr_1} && \\
P \times G \times G&& K \ar@{-->}[rr]^{s} \ar@{-->}[ll]_{r} \ar[u] \ar[d] && P\\
&& (M \times G )\times_{P \times G} M \ar[urr]_{p \circ \pr_2} \ar[ull]^{(q \times \id_G) \circ \pr_1} &&}
\end{tabular}
\end{equation*}
A more legible presentation of the 1-arrow $(r,N,s)$ would be the square
\begin{equation*}\label{diag:G-torsor-1.1}
\begin{tabular}{c}
\xymatrix{P \times G^2 \ar[r]^{\id_P \times +} \ar[d]_{m \times \id_G} \ar@{}[dr]|{{\kesir{\mbox{$\Da$}}{(r,N,s)}}}& P \times G \ar[d]^{m} \\P\times G \ar[r]_{m} & P}
\end{tabular}
\end{equation*}
where each arrow is a fraction.
\item $t$ is a 2-arrow of fractions which is the morphism of complexes from the vertical composition of the 1-arrow of fractions
\begin{equation*}
\xymatrix@!=0.75cm{&&& (P \times G^3) \times_{P \times G^2} ((P \times G^2)\times_{P \times G} M) \ar[ddrrr] \ar[ddlll] &&& \\
          &&& (P \times G^3) \times_{P \times G^2} K \ar[drrr]^{s_1}\ar[dlll]_{r_1} \ar@{-->}[u]_{u_1} \ar@{-->}[d]^{t_1} &&&\\
          P \times G^3 &&& ((P \times G^3) \times_{P \times G^2}(M \times G))\times_{P \times G} M \ar[rrr] \ar[lll] &&& P\\
          &&& (K\times G)\times_{P \times G} M \ar[urrr]_{s_1'} \ar[ulll]^{r_1'} \ar@{-->}[u]_{u_1'}\ar@{-->}[d]^{t_1'} &&&\\
          &&& ((M\times G^2)\times_{P \times G^2}(M \times G)) \times_{P \times G} M \ar[uurrr] \ar[uulll] &&&}
\end{equation*}
 to the vertical composition of the 1-arrow of fractions 
\begin{equation*}
\xymatrix@!=0.75cm{&&& (P \times G^3) \times_{P \times G^2} ((P \times G^2)\times_{P \times G} M) \ar[ddrrr] \ar[ddlll] &&& \\
          &&& (P \times G^3) \times_{P \times G^2} K \ar[drrr]^{s_2}\ar[dlll]_{r_2} \ar@{-->}[u]_{u_2} \ar@{-->}[d]^{t_2} &&&\\
          P \times G^3 &&& ((P \times G^3) \times_{P \times G^2}(M \times G))\times_{P \times G} M \ar[rrr] \ar[lll] &&& P\\
          &&& (K\times G)\times_{P \times G} M \ar[urrr]_{s_2'} \ar[ulll]^{r_2'} \ar@{-->}[u]_{u_2'}\ar@{-->}[d]^{t_2'} &&&\\
          &&& ((M\times G^2)\times_{P \times G^2}(M \times G)) \times_{P \times G} M \ar[uurrr] \ar[uulll] &&&}
\end{equation*}
The 2-arrow $t$ might be better understood if we represent it as a 3-morphism between the pasting of the 2-morphisms between the left and right diagrams below:
\begin{equation*}\label{diag:G-torsor-2}
\begin{tabular}{c}
\xymatrix@!=0.75cm{& P \times G^3 \ar@{}[dd]|{\kesir{\mbox{$\La$}}{(r,N,s)}}\ar[dr]|(0.4){\id_P\times(\id_G\times+)} \ar[dl]|(0.6){m \times \id_G \times \id_G} \ar[rr]^{\id_P \times (+ \times \id_G)}&& P \times G^2 \ar@{}[dl]|{\car}\ar[dr]|{\id_P \times +} &&& P \times G^3 \ar@{}[dr]|{\car}\ar[dl]|{m \times \id_G \times \id_G} \ar[rr]^{\id_P \times (+ \times \id_G)} && P \times G^2 \ar@{}[dd]|{\kesir{\mbox{$\La$}}{(r,N,s)}} \ar[dl]|{m \times \id_G} \ar[dr]|{\id_P \times +} &\\
P \times G^2 \ar[dr]|{m \times \id_G} && P \times G^2 \ar@{}[dr]|{\kesir{\mbox{$\Da$}}{(r,N,s)}}\ar[dl]|{m \times \id_G} \ar[rr]^{\id_P \times +} && P \times G \ar[dl]^{m} \ar@{}[r]|{\kesir{t}{\mbox{$\Rra$}}}&P \times G^2 \ar[dr]|{m \times \id_G} \ar[rr]^{\id_P \times F} && P \times G \ar@{}[dl]|{\kesir{\mbox{$\Da$}}{(r,N,s)}}\ar[dr]_{m} && P \times G \ar[dl]^{m}\\
& P \times G \ar[rr]_{m} && P&&& P \times G \ar[rr]_{m} && P}
\end{tabular}
\end{equation*}
\end{itemize}
\end{definition}
In order to define a right $G$-torsor using length 3 complexes we have substituted, in the Definition \ref{def:right_torsor_over_a_group_like_2_stack}, additive 2-functors by fractions, morphisms of additive 2-functors by 1-arrows of fractions, and modifications of morphisms of additive 2-functors by 2-arrows of fractions. One can find out the compatibility conditions, that the data underlying a right $G$-torsor have to satisfy, by applying the same arguments. Moreover, these arguments allow us to define 1-,2-, and 3-morphisms of right $G$-torsors. Hence, right $G$-torsors over $\ES$ form a 3-category. In a similar way we can define also left $G$-torsors. 

If $G$ is a length 3 complex of abelian sheaves, we can define the notion of $G$-torsor: it is a length 3 complex of sheaves of sets endowed with a structure of left $G$-torsor and with a structure of right $G$-torsor which are compatible with each other.  $G$-torsors over $\ES$ form a 3-category that we denote by $\TORS(G)$.

\begin{proposition}\label{proposition:triequivalence_of_extensions}
The triequivalence $2\st$ (\ref{intro:2st}) induces a triequivalence between $\TORS(\twoG)$ and $\TORS(G)$. 
\end{proposition}


\section{Homological interpretation of $\twoG$-torsors}

Let $\twoG$ be a Picard $\ES$-2-stack. As observed at the end of Section 1, $[\mathbf{0}]^{\flat\flat}$ is the complex $\mathbf{E}=[\mathbf{e} \stackrel{id_\mathbf{e}}{\rightarrow}  \mathbf{e} \stackrel{id_\mathbf{e}}{\rightarrow} \mathbf{e}]$ of $\twocatD$ where $\mathbf{e}$ the final object of the category of abelian sheaves on $\ES$.

\begin{lemma}\label{lemma:HomTors(PP)isPicard}
For any $\twoG$-torsor $\twoP$, the Picard $\ES$-2-stack $\twoG$ is equivalent to $\HomTORS(\twoP,\twoP)$. In particular, $\HomTORS(\twoP,\twoP)$ is endowed with a Picard $\ES$-2-stack structure.
\end{lemma}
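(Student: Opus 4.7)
\emph{Plan.} The strategy is to construct a canonical additive 2-functor
$$\Phi \colon \twoG \longrightarrow \HomTORS(\twoP, \twoP)$$
sending $g \in \twoG(U)$ to the left-multiplication endomorphism $L_g \colon p \mapsto g \cdot p$ of $\twoP|_U$, and to prove it is an equivalence of underlying $\ES$-2-stacks. Once $\Phi$ is shown to be an equivalence, the Picard structure of $\twoG$ transports to $\HomTORS(\twoP,\twoP)$, which yields the ``in particular'' clause; the identification with $\Hom_{2\PICARD}(\mathbf{0},\twoG)$ then follows from the derived-category dictionary of the Example following~\eqref{2st_flat_flat}.

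First I would realise $L_g$ as a morphism of $\twoG$-torsors. The natural 2-transformations $\gamma^l_{L_g}$ and $\gamma^r_{L_g}$, expressing compatibility with the left and right $\twoG$-actions on $\twoP$, are extracted respectively from $\mu^l_\twoP$ together with the braiding $\comm$ of $\twoG$, and from $\kappa_\twoP$; the modification $\Sigma_{L_g}$ is produced by pasting $\Omega^l_\twoP$ and $\Omega^r_\twoP$. The coherence axioms required of $L_g$ as a morphism of $\twoG$-torsors (the compatibilities between $\Sigma_{L_g}$, $\Psi^l_{L_g}$ and $\Psi^r_{L_g}$) then reduce to the cocycle conditions already built into $\Theta^l_\twoP, \Theta^r_\twoP, \Omega^l_\twoP, \Omega^r_\twoP$. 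I would next promote $g \mapsto L_g$ to an additive 2-functor: the natural 2-transformation $\gamma_\Phi \colon L_{g_1 g_2} \Rightarrow L_{g_1} \circ L_{g_2}$ is provided componentwise by $\mu^l_\twoP$, and its compatibility with $\ass, \pi, \comm, \zeta, \gotikh_1, \gotikh_2, \eta$ is obtained by pasting $\Theta^l_\twoP$ with the higher coherence modifications of the Picard 2-stack $\twoG$.

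To prove $\Phi$ is an equivalence of $\ES$-2-stacks I would reduce to the trivial case. By local triviality, on a covering sieve $R$ of $\ES$ we have $\twoP \simeq \twoG$ as $\twoG$-torsors, and over $R$ the morphism $\Phi$ becomes the canonical map $\twoG \to \HomTORS(\twoG, \twoG)$, $g \mapsto L_g$, whose quasi-inverse sends a torsor endomorphism $F$ to $F(1_\twoG)$: indeed, compatibility of $F$ with the two-sided action of $\twoG$, combined with the braiding $\comm$ of $\twoG$, forces $F(g) \simeq g \cdot F(1_\twoG) \simeq F(1_\twoG) \cdot g$ naturally in $g$. Since the $\ES$-2-stack of morphisms $\HomTORS(\twoP,\twoP)$ satisfies 2-descent, these local quasi-inverses glue to a global quasi-inverse of $\Phi$.

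The main obstacle is the coherence bookkeeping at the level of 2- and 3-cells: extracting from $\Theta^l_\twoP, \Theta^r_\twoP, \Omega^l_\twoP, \Omega^r_\twoP$ the precise pastings that witness $\Sigma_{L_g}$, verifying that these satisfy the cocycle identities required of a $\twoG$-torsor morphism, and dually checking that the assignment $g \mapsto L_g$ inherits the higher Picard data $\zeta, \gotikh_1, \gotikh_2, \eta$ from $\twoG$. No individual verification is conceptually subtle, but the sheer number of pasting diagrams that must be matched is where the bulk of the work lies.
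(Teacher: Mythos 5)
Your proposal follows essentially the same route as the paper: the paper's entire proof consists of exhibiting the additive 2-functor $F \mapsto \big(p \mapsto F(e).p\big)$, which under the identification of an additive 2-functor $F:\mathbf{0}\ra\twoG$ with the object $g=F(e)$ is exactly your $\Phi: g \mapsto L_g$. Your elaboration of the torsor-morphism coherence data for $L_g$, the additivity of $g\mapsto L_g$, and the local-triviality-plus-descent argument for essential surjectivity and full faithfulness simply supplies the verifications the paper leaves implicit, so the two proofs agree in substance.
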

\begin{proof}
The additive 2-functor $ \twoG \rightarrow  \HomTORS(\twoP,\twoP), g \mapsto \big(p \mapsto g . p \big)$ furnishes the required equivalence. 
\end{proof}

By the above Lemma, the homotopy groups $\pi_i(\HomTORS(\twoP,\twoP))$ are abelian groups.
Since by definition $\TORS^{-i}(\twoG) =\pi_{i}(\HomTORS(\twoP,\twoP))$, we have

\begin{corollary} 
The sets $\TORS^{i}(\twoG)$, for $i=0,-1,-2$, are abelian groups.
\end{corollary}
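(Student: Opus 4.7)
The claim follows essentially immediately from Lemma \ref{lemma:HomTors(PP)isPicard} together with the general fact, recalled at the end of Section 1, that the homotopy groups $\pi_i$ of a Picard $\ES$-2-stack are abelian groups for $i=0,1,2$. So the plan is simply to unwind the definitions and transport the abelian group structure along the equivalence supplied by the lemma.

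First I would invoke Lemma \ref{lemma:HomTors(PP)isPicard} to transport the Picard $\ES$-2-stack structure from $\Hom_{2\PICARD}(\mathbf{0},\twoG)$ along the equivalence $F \mapsto (p \mapsto F(e).p)$ to $\HomTORS(\twoP,\twoP)$. In particular, since equivalences of Picard $\ES$-2-stacks induce isomorphisms on all homotopy groups, we get
$$\pi_i(\HomTORS(\twoP,\twoP)) \;\cong\; \pi_i(\Hom_{2\PICARD}(\mathbf{0},\twoG)) \qquad (i = 0,1,2).$$

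Next I would apply the general fact from Section 1: for any Picard $\ES$-2-stack $\twoM$, the homotopy sheaf $\pi_0(\twoM)$ is an abelian group via the group law $\otimes$ (descended to equivalence classes, with associativity and commutativity guaranteed by $\ass$, $\comm$, $\gotikh_1$, $\gotikh_2$), while $\pi_1(\twoM) = \pi_0(\Aut(e))$ and $\pi_2(\twoM) = \pi_1(\Aut(e))$ are abelian groups because $\Aut(e)$ is itself a Picard $\ES$-stack. Applied to $\twoM = \HomTORS(\twoP,\twoP)$, this shows that $\pi_i(\HomTORS(\twoP,\twoP))$ is an abelian group for $i=0,1,2$.

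Finally, using the definition $\TORS^{-i}(\twoG) = \pi_i(\HomTORS(\twoP,\twoP))$ recorded in the introduction, we conclude that $\TORS^0(\twoG)$, $\TORS^{-1}(\twoG)$ and $\TORS^{-2}(\twoG)$ are abelian groups. There is no real obstacle here, since all the work sits in the prior lemma; the only point meriting a remark is that the Picard $\ES$-2-stack structure on $\HomTORS(\twoP,\twoP)$ is canonical (independent of the choice of $\twoP$), which is reassuring because $\twoP$ is locally equivalent to the trivial torsor $\twoG$ and conjugation by this local equivalence matches the two Picard structures up to equivalence.
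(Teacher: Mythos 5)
Your proposal is correct and follows essentially the same route as the paper: invoke Lemma \ref{lemma:HomTors(PP)isPicard} to endow $\HomTORS(\twoP,\twoP)$ with a Picard $\ES$-2-stack structure, observe that the homotopy groups $\pi_i$ of a Picard $\ES$-2-stack are abelian for $i=0,1,2$, and conclude via the definition $\TORS^{-i}(\twoG)=\pi_i(\HomTORS(\twoP,\twoP))$. The closing remark about independence of the choice of $\twoP$ is a reasonable extra observation that the paper leaves implicit.
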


\begin{proof}[Proof of Theorem \ref{thm:introtor} for i=0,-1,-2]
The Picard $\ES$-2-stack $\twoG$ is equivalent to the hom-2-groupoid $\Hom_{\ES-2-\mathrm{Stacks}}(\mathbf{0},\twoG)$ of morphisms of $\ES$-2-stacks from $\mathbf{0}$ to $\twoG$ via 
 the additive 2-functor $\twoG \rightarrow \Hom_{\ES-2-\mathrm{Stacks}}(\mathbf{0},\twoG), g \mapsto  \big(e \mapsto g \big)$. In particular, $\Hom_{\ES-2-\mathrm{Stacks}}(\mathbf{0},\twoG)$ is endowed with a Picard $\ES$-2-stack structure and $[\Hom_{\ES-2-\mathrm{Stacks}}(\mathbf{0},\twoG)]^{\flat\flat}= \tau_{\leq 0}{\mathrm{R}}\hhom(\mathbf{E},[\twoG]^{\flat\flat})$. By Lemma \ref{lemma:HomTors(PP)isPicard}, we have $\TORS^{i}(\twoG) = \pi_{-i}(\HomTORS(\twoP,\twoP)) \cong \pi_{-i}(\twoG) \cong \pi_{-i}(\Hom_{\ES-2-\mathrm{Stacks}}(\mathbf{0},\twoG)) = {\mathrm{H}}^i  \big(\tau_{\leq 0}{\mathrm{R}}\hhom(\mathbf{E},[\twoG]^{\flat\flat})\big) = {\mathrm{H}}^i  \big(\tau_{\leq 0}{\mathrm{R}}\Gamma([\twoG]^{\flat\flat})\big)=\HH^i([\twoG]^{\flat\flat})$.
\end{proof}

Before the proof of Theorem \ref{thm:introtor} for $i=1$, we record the following:

\begin{lemma}\label{lemma:generated_2_stack}
Let $\twoP$ be an $\ES$-2-stack. Then there exists a Picard $\ES$-2-stack $\mathbb{Z}[\twoP]$ whose fibers over any object  $U$ of $ \ES$ are the following 2-groupoids:
\begin{itemize}
\item an object of $\mathbb{Z}[\twoP](U)$ consists of a finite formal sum $\sum_{i\in I} n_i [p_i]$ with $n_i \in \mathbb{Z}$ and $p_i$ an object of $\twoP(U)$;
\item there exists a 1-morphism between any two objects $\sum_{i\in I} n_i [p_i]$ and $\sum_{j\in J} m_j [q_j]$ if $I=J$, $n_i=m_i$ for all $i \in I$, and there exists a morphism $f_i:p_i \ra q_i$ in $\twoP(U)$ for all $i \in I$. In this case, a 1-morphism $\sum_{i\in I} n_i [p_i] \ra \sum_{i\in I} n_i [q_i]$ is the finite formal sum 
$\sum_{i \in I}n_i[f_i]$; 
\item a 2-morphism between any two parallel 1-morphisms $\sum_{i \in I}n_i[f_i]$ and $\sum_{i \in I}n_i[g_i]$ from $\sum_{i\in I} n_i [p_i]$ to $\sum_{i\in I} n_i [q_i]$ is the finite formal sum $\sum_{i \in I}n_i[\alpha_i]$ where $\alpha_i$ is a 2-morphism in $\twoP(U)$ from $f_i$ to $g_i$ for all $i\in I$. 
\end{itemize}  
\end{lemma}

\begin{proof}
To verify that $\mathbb{Z}[\twoP]$ is a fibered 2-category in 2-groupoids over $\ES$ is straightforward. Let 
\begin{equation}\label{2-descent_datum_of_Z[P]}
(\sum_{i \in I}n_i[\varphi_i],\sum_{i \in I}n_i[\alpha_i])
\end{equation}
be a 2-descent datum for the object $\sum_{i\in I} n_i [p_i]$ of $\mathbb{Z}[\twoP](V_0)$ relative to the hypercover $\delta:V_{\bullet} \ra U$ where $\varphi_i:d_0^*p_i \ra d_1^*p_i$ is a 1-morphism in $\twoP(V_1)$ and $\alpha_i: d_1^*\varphi_i \Ra d_2^*\varphi_i \circ d_0^*\varphi_i$ is a 2-morphism in $\twoP(V_2)$. Since the collection (\ref{2-descent_datum_of_Z[P]}) satisfies the 2-cocycle condition 
\[
\big((d_2d_3)^* \sum_{i \in I}n_i[\varphi_i] * d_0^* \sum_{i \in I}n_i[\alpha_i]\big) \circ d_2^* \sum_{i \in I}n_i[\alpha_i]=\big(d_3^* \sum_{i \in I}n_i[\alpha_i] * (d_0d_1)^* \sum_{i \in I}n_i[\varphi_i]\big) \circ d_1^* \sum_{i \in I}n_i[\alpha_i]
\]
 so do the collections $(\varphi_i,\alpha_i)$ for all $i \in I$. This shows that for all $i \in I$, $(\varphi_i,\alpha_i)$ is a 2-descent datum for the object $p_i$ of $\twoP(V_0)$. Then for every $i \in I$ the 2-descent datum is effective, i.e. for every $i \in I$
it exists an object $q_i \in \twoP(U)$, a 1-morphism $\psi_i:\delta^*(q_i) \ra p_i$ in $\twoP(V_0)$, and a 2-morphism $\beta_i:\varphi_i \circ d_0^*\psi_i \Ra d_1^*\psi_i$ in $\twoP(V_1)$ so that the condition 
\[
(d_0^*\varphi_i * d_2^*\beta_i) \circ (d_0^*\beta_i * d_1^*\varphi_i) \circ (d_0^*d_0^*\psi_i * \alpha_i)=d_1^*\beta_i
\]
is satisfied. We observe that the formal sum of these effective data, i.e the collection $(\sum_{i \in I}n_i[q_i],\sum_{i \in I}n_i[\psi_i],\sum_{i \in I}n_i[\beta_i])$, is the effective data for the 2-descent datum (\ref{2-descent_datum_of_Z[P]}). We show using similar arguments that the finite formal sums of morphisms of $\twoP$ form an $\ES$-stack. Hence, $\mathbb{Z}[\twoP]$ is an $\ES$-2-stack. The Picard structure on $\mathbb{Z}[\twoP]$ is defined by concatenation.
\end{proof}

\begin{definition}\label{definition:Z[P]}
If $\twoP$ is an $\ES$-2-stack,  the \emph{Picard $\ES$-2-stack generated by $\twoP$} is the Picard $\ES$-2-stack $\mathbb{Z}[\twoP]$ constructed in Lemma \ref{lemma:generated_2_stack}.
\end{definition}

The Picard $\ES$-2-stack $\mathbb{Z}[\twoP]$ does not satisfy the universal property of a free object. Maybe the definition can be improved so that it works in the expected way, but this would be beyond the scope of the current paper.

\begin{lemma}\label{lemma:generated_Pic_2_stack}
If $\twoP=2\st\big([P^{-2} \ra  P^{-1} \ra P^0]\big)$, then $\mathbb{Z}[\twoP]=2\st\big([{\mathbb{Z}}[P^{-2}] \ra {\mathbb{Z}}[P^{-1}] \ra {\mathbb{Z}}[P^0]]\big)$, where ${\mathbb{Z}}[P^i]$ is the abelian sheaf generated by $P^i$ according to \cite[Expos\'e IV 11]{SGA4}.
\end{lemma}

\begin{proof}
An object of $\twoP(U)$ (with $U$ an object of $\ES$) is a collection $(V_{\bullet} \ra U, X, \varphi, \alpha)$ where $(X, \varphi, \alpha)$ is an effective 2-descent datum relative to the hypercover $V_{\bullet} \ra U$.  Then an object of $\mathbb{Z}[\twoP](U)$ is the formal sum $\sum_{i \in I}n_i[(V_{\bullet}^i \ra U,X_i,\varphi_i,\alpha_i)]$. The claim follows from the equality 
\[\sum_{i \in I}n_i[(V_{\bullet}^i \ra U,X_i,\varphi_i,\alpha_i)]=(V_{\bullet} \ra U,\sum_{i \in I}n_i[X_i],\sum_{i \in I}n_i[\varphi_i],\sum_{i \in I}n_i[\alpha_i]),\]
 where $V_{\bullet} \ra U$ is the refinement of the hypercovers $V_{\bullet}^i \ra U$.
\end{proof}

\begin{proof}[Proof of Theorem \ref{thm:introtor} for i=1]
The idea of the proof is to construct two morphisms 
\begin{align}
\nonumber	\Theta & \colon \TORS^1(\twoG) \longrightarrow \HH^1([\twoG]^{\flat\flat}),\\
\nonumber	\Psi & \colon \HH^1([\twoG]^{\flat\flat}) \longrightarrow \TORS^1(\twoG),
\end{align}
and to check that $\Theta \circ \Psi = \id = \Psi \circ \Theta $ and that $\Theta$ is an homomorphism of groups. We will just construct $\Theta$ and $\Psi$, since the remains of the proof are very similar to 
\cite[Thm 1.1 $proof\; i=1$]{beta}.

We fix the following notation: if $A$ is a complex of $\twocatD$, we set $\twoA=2\st^{\flat\flat}(A)$, and   if $f:A \ra B$ is a morphism in $\twocatD$, we denote by $F: \twoA \ra \twoB$ a representative of the equivalence class of additive 2-functors $2\st^{\flat\flat}(f)$. 

 Construction of $\Theta$: Let $\twoP$ be a $\twoG$-torsor and let $\mathbb{Z}[\twoP]$ be the Picard $\ES$-2-stack generated by $\twoP$. Consider the additive 2-functor 
$$H: \mathbb{Z}[\twoP] \longrightarrow \mathbb{Z}[\mathbf{0}]  $$ 
which associates to an object $\sum_i n_i [p_i] $ of $\mathbb{Z}[\twoP](U)$ the object $\sum_i n_i $ of $ \mathbb{Z}[\mathbf{0}](U)$, for $U$ an object of $\ES$. The homotopy kernel $\twoKer(H)$ of $H$ is the Picard $\ES$-2-stack whose objects are sums of the form
$[p]-[p']$, with $p,p'$ objects of $\twoP(U)$. Clearly $\mathbb{Z}[\twoP]$ is an extension of Picard $\ES$-2-stacks of $\mathbb{Z}[\mathbf{0}]  $ by $\twoKer(H)$. 
Consider now the additive 2-functor $L: \twoKer(H) \ra \twoG $ 
which associates to an object $[p]-[p']$ of $\twoKer(H)(U)$ the object $g$ of $ \twoG(U)$ such that $g . p=p'$. According to \cite[Def 7.3]{beta}, the push-down of the extension $\mathbb{Z}[\twoP]$ via $L: \twoKer(H) \ra \twoG $ is an extension $L_*\mathbb{Z}[\twoP]$ of $ \mathbb{Z}[\mathbf{0}] $ by $\twoG$. 
By \cite[Prop 6.7, Rem 6.6]{beta}, to this extension $L_*\mathbb{Z}[\twoP]$ of Picard $\ES$-2-stacks is associated the distinguished triangle $[\twoG]^{\flat\flat} \ra [L_*\mathbb{Z}[\twoP]]^{\flat\flat}  \ra \mathbf{E} \ra +$ in $\der$ which furnishes the long exact sequence
\begin{equation*}\label{eqn:(1)1}
 \xymatrix@1@C=0.6cm{\cdots \ar[r] & \HH^0([\twoG]^{\flat\flat})  \ar[r] & \HH^0([L_*\mathbb{Z}[\twoP]]^{\flat\flat}) \ar[r]& \HH^0(\mathbf{E}) \ar[r]^(0.40){\partial} & \HH^1([\twoG]^{\flat\flat}) \ar[r] & \cdots}
\end{equation*}
We set $\Theta(\twoP)= \partial (1),$ where the element $1$ of $ \HH^0(\mathbf{E})$ corresponds to the global neutral object $ e \in \Gamma(\mathbf{0})$ of the Picard $\ES$-2-stack $\mathbf{0}$.

 Construction of $\Psi$: Let $G$ be the complex $[\twoG]^{\flat\flat}$ of $\twocatD$ corresponding to the Picard $\ES$-2-stack $\twoG$. Choose a complex $I=[I^{-2} \rightarrow  I^{-1} \rightarrow I^0]$ of $\twocatD$ such that $I^{-2}$, $I^{-1},I^0$ are injective and such that there exists an injective morphism of complexes $s \colon G \ra I$. We complete $s$ into a distinguished triangle
$G \stackrel{s}{\rightarrow} I \stackrel{t}{\rightarrow} \MC(s)\rightarrow +$
in $\der$. Setting $K= \tau_{\geq -2} \MC(s)$, the above distinguished triangle furnishes an extension of Picard $\ES$-2-stacks $\twoG \stackrel{S}{\rightarrow}  \twoI \stackrel{T}{\rightarrow}  \twoK $
and the long exact sequence
\begin{equation*}
 \xymatrix@1@C=0.6cm{\cdots \ar[r] & \HH^0(G) \ar[r] & \HH^0(I) \ar[r]^{ t \circ }& \HH^0(K) \ar[r]^{\partial} & \HH^1(G) \ar[r] &0.}
\end{equation*}
Given an element $x$ of $\HH^1(G)$, choose an element $u$ of $\HH^0(K)$ such that $\partial (u) =x$. Remark that via the equivalence of categories $2\st^{\flat\flat}$ (\ref{intro:2st_flat_flat}), the element $u \in \HH^0(K)$ corresponds to a global section $U \in \Gamma(\twoK)$ of $\twoK$, i.e. to a morphism of $\ES$-2-stacks $U:\mathbf{0} \rightarrow \twoK $. 
Using the notion of pull-back (or fibered product) of $\ES$-2-stacks in 2-groupoids given in Definition \ref{def:fibered_product}, consider the pull-back $U^*\twoI$ of $\twoI$ via $U \colon \mathbf{0} \ra \twoK$. This pull-back $U^*\twoI$, which is an $\ES$-2-stack in 2-groupoids
not necessarily endowed with a Picard $\ES$-2-stack structure, is a $\twoG$-torsor: in fact, the action $\twoG \times U^*\twoI \rightarrow U^*\twoI$ of $\twoG$ on $U^*\twoI $ is given by $(g, i) \mapsto S(g).i$, where $g$ is an object of $\twoG$, $i$ is an object of $\twoI$ such that $T(i)=U(e)$, and $"."$ is the group law of the Picard $\ES$-2-stack $\twoI$.
We set $\Psi(x) =  U^*\twoI $ i.e. to be precise $\Psi(x)$ is the equivalence class of the $\twoG$-torsor $U^*\twoI$.
\end{proof}

\begin{proof}[Proof of Corollary \ref{corollary:Z[I]exttor}] Let $G=[\twoG]^{\flat\flat}$ and $P=[\twoP]^{\flat\flat}$. From Lemma \ref{lemma:generated_Pic_2_stack}, $[\mathbb{Z}[\twoP]]^{\flat\flat}={\mathbb{Z}}[P]$. By definition of ${\mathbb{Z}}[P]$, the functor $ G \rightarrow {\hhom}_{{\mathbb{Z}}}({\mathbb{Z}}[P],G)$
 is isomorphic to the functor $ G \rightarrow G(P)=\HH^0(P,G_P),$
with $G_P=[\twoG_\twoP]^{\flat\flat}$. Taking the derived functors and using  
the homological interpretation of torsors (Thm \ref{thm:introtor}) and of extensions of Picard $\ES$-2-stacks \cite[Thm 1.1]{beta}, we can conclude.
\end{proof}

\section{Description of extensions of Picard 2-stacks in terms of torsors}

Let $\twoP$ and $\twoG$ be two Picard $\ES$-2-stacks. If $K$ is a subset of a finite set $E$, $p_K: \twoP^E  \ra \twoP^K$ is the projection to the factors belonging to $K$, and $ \otimes_K: \twoP^E  \ra \twoP^{E-K+1} $ is the group law $\otimes: \twoP \times \twoP \ra \twoP$ on the factors belonging to $K$. If $\iota$ is a permutation of the set $E$, $\Perm(\iota): \twoP^E  \ra \twoP^{\iota(E)}$ is the permutation of the factors  according to $\iota$. Moreover let $\mathsf{s}: \twoP \times \twoP \ra \twoP \times \twoP$ be the morphism of $\ES$-2-stacks that exchanges the factors and let $D: \twoP \ra \twoP \times \twoP$ be the diagonal morphism of $\ES$-2-stacks.

\begin{proposition}\label{thm:ext-tor}
To have an extension $\twoE=(\twoE,I,J)$ of $\twoP$ by $\twoG$ is equivalent to have 
\begin{enumerate}
\item a $\twoG_\twoP$-torsor $\twoE$ over $\twoP$;
\item a morphism of $\twoG_{\twoP^2}$-torsors $M : p_1^* \; \twoE \wedge p_2^* \; \twoE \lra \otimes^* \; \twoE.$ Here $\otimes^* \; \twoE$ is the pull-back of $\twoE$ via the group law $\otimes: \twoP \times \twoP \ra \twoP $ of $\twoP$ and for $i=1,2$, $p_i^* \; \twoE$ is the pull-back of $\twoE$ via the $i$-th projection $p_i: \twoP \times \twoP \ra \twoP $ (these pull-backs are pull-backs of $\ES$-2-stacks in 2-groupoids according to Definition \ref{def:fibered_product});
\item a 2-morphism of $\twoG_{\twoP^3}$-torsors $\alpha: M \circ (M \wedge \id) \Ra M \circ (\id \wedge M);$
\item a 3-morphism of $\twoG_{\twoP^4}$-torsors 
$\gotika:  p^*_{234} \; \alpha \circ \otimes^*_{23} \; \alpha \circ p^*_{123}\; \alpha \Rrightarrow   \otimes^*_{34}\; \alpha \circ \otimes^*_{12} \; \alpha $
whose pull-back over $\twoP^5$ satisfies the equality 
\begin{equation}\label{ext-tor:1}
\otimes_{45}^* \gotika \circ \otimes_{23}^* \;\gotika \circ p_{2345}^* \;\gotika = \otimes_{12}^* \;\gotika \circ p_{1234}^* \;\gotika  \circ \otimes_{34}^* \;\gotika.
\end{equation}
\item a 2-morphism of $\twoG_{\twoP^2}$-torsors $\chi: M \Ra M \circ \mathsf{s} ;$ 
\item a 3-morphism of $\twoG_{\twoP^2}$-torsors $\gotiks: \chi \circ \chi \Rrightarrow \id$ satisfying the equation of 2-arrows obtained from (\ref{CoherenceZeta}) by replacing $\comm$ with $\chi$ and $\zeta$ with $\gotiks$;
\item two 3-morphisms of $\twoG_{\twoP^3}$-torsors 
\begin{align}
\nonumber \gotikc_1&: Perm(132)^* \alpha \circ \otimes^*_{23} \; \chi \circ \alpha \Rrightarrow  p^*_{13} \; \chi \circ Perm(12)^*  \alpha \circ p^*_{12} \; \chi\\
\nonumber \gotikc_2&: Perm(123)^* \alpha^{-1} \circ \otimes^*_{12}\mathsf{s}^* \; \chi^{-1} \circ \alpha^{-1} \Rrightarrow  p^*_{13}\mathsf{s}^* \; \chi^{-1} \circ Perm(23)^*  \alpha^{-1} \circ p^*_{23}\mathsf{s}^* \; \chi^{-1}
\end{align}
 which satisfy the compatibility conditions obtained from  (\ref{diagram:hexagone3}), (\ref{diagram:hexagone3bis}), (\ref{diagram:hexagone1_vs_hexagone2}),
 (\ref{diagram:hexagone1_vs_hexagone2_bis}), (\ref{diagram:Z_system}) by replacing $\zeta$ with $\gotiks$, $\gotikh_i$ with $\gotikc_i$ for $i=1,2$, and whose pull-backs over $\twoP^4$ satisfy
 \begin{align}
\label{ext-tor:2} Perm(12)^* \gotika \circ p^*_{134} \; \gotikc_1 \circ \otimes_{34}^* \; \gotikc_1 \circ \gotika &=
p^*_{123} \; \gotikc_1 \circ Perm(132)^* \gotika  \circ  Perm(1432)^* \gotika \circ \otimes_{23}^* \; \gotikc_1.\\
\label{ext-tor:4} Perm(34)^* \gotika \circ p^*_{124} \; \gotikc_2 \circ \otimes_{12}^* \; \gotikc_2 \circ \gotika &=
p^*_{234} \; \gotikc_2 \circ Perm(234)^* \gotika  \circ  Perm(1234)^* \gotika \circ \otimes_{23}^* \; \gotikc_2.
\end{align}
\item a 3-morphism of $\twoG_{\twoP}$-torsors $\gotikp: D^* \; \chi \Rrightarrow \id$
satisfying $\gotikp*\gotikp=\gotiks$ and the compatibility condition obtained from  (\ref{diagram:additivity_of_braiding}) by replacing $\pi$ with $\gotika$, $\zeta$ with $\gotiks$, 
 $\gotikh_i$ with $\gotikc_i$ for $i=1,2$, $\eta$ with $\gotikp$.
\end{enumerate}
\end{proposition}

\begin{proof} (I) Let $\twoE=(\twoE,I,J)$ be an extension of $\twoP$ by $\twoG$. Via the additive 2-functor $I: \twoG \ra \twoE$, the Picard $\ES$-2-stack $\twoG$ acts on the left side and on the right side of $\twoE$ inducing an action on the homotopy fiber $\twoE_p$ for any object $p \in \twoP$. Since the additive 2-functor $J: \twoE \ra \twoP$ induces a surjection $\pi_0(J): \pi_0(\twoE) \ra \pi_0(\twoP)$ on the $\pi_0$, $\twoE_p$ and $\twoE_{-p}$ are non empty. Choose an object $y$ in $\twoE_{-p}$. Then $y\otimes -:\twoE_p \ra \twoKer(J)(U)$ is a biequivalence. Hence,  $\twoE$ is a $\twoG_\twoP$-torsor over $\twoP$ (1). The group law $\otimes : \twoE \times \twoE \ra \twoE$ of $\twoE$ furnishes a morphism of $\ES$-2-stacks $p_1^* \; \twoE \times p_2^* \; \twoE \ra \otimes^* \; \twoE$ over $\twoP \times \twoP$. The existence for any $g \in \twoG$ and $a,b \in \twoE$ of the associativity constraint $\ass_{(a,g,b)}: (ag) b \rightarrow a(g b)$ implies that this morphism of $\ES$-2-stacks $p_1^* \; \twoE \times p_2^* \; \twoE \ra \otimes^* \; \twoE$ factorizes via the contracted product  $M : p_1^* \; \twoE \wedge p_2^* \; \twoE \ra \otimes^* \; \twoE$. The existence for any $g \in \twoG$ and $a,b \in \twoE$ of the associativity constraints $\ass_{(g,a,b)}: (ga) b \rightarrow g(a b)$ and  $\ass_{(a,b,g)}: (ab) g \rightarrow a(b g)$ implies that the morphism of $\ES$-2-stacks  $M : p_1^* \; \twoE \wedge p_2^* \; \twoE \ra \otimes^* \; \twoE$ is in fact a morphism of $\twoG_{\twoP^2}$-torsors once we consider on $p_1^* \; \twoE \wedge p_2^* \; \twoE$ the following structure of $\twoG_{\twoP^2}$-torsors: the left (resp. right) action of $\twoG_{\twoP^2}$ on $p_1^* \; \twoE \wedge p_2^* \; \twoE$ comes from the left (resp. right) action of $\twoG_{\twoP^2}$ on $p_1^* \; \twoE$ (resp. $ p_2^* \; \twoE$) (2). Now the associativity $\ass:\otimes \circ (\otimes \times \id_\twoE) \Ra \otimes \circ (\id_\twoE \times \otimes)$ implies the 2-morphism of $\twoG_{\twoP^3}$-torsors $\alpha: M \circ (M \wedge \id) \Ra M \circ (\id \wedge M)$ over $\twoP \times \twoP \times \twoP$ (3). The modification $\pi$ (\ref{ObstructionCoherenceAss}), satisfying the coherence axiom of Stasheff's polytope (\ref{diagram:stasheff}), is equivalent to the 3-morphism of $\twoG_{\twoP^4}$-torsors $\gotika$ satisfying the equality (\ref{ext-tor:1}) (4). The braiding $\comm : \otimes \circ \mathsf{s} \Ra \otimes$ furnishes the 2-morphism of $\twoG_{\twoP^2}$-torsors $\chi: M \Ra M \circ \mathsf{s} $ over $\twoP \times \twoP$ (5). The modification $\zeta$ (\ref{ObstructionCoherenceComm}),  satisfying the coherence condition (\ref{CoherenceZeta}), is equivalent to the 3-morphism of $\twoG_{\twoP^2}$-torsors $\gotiks$ with its coherence condition (6). The modifications $\gotikh_1$ and $\gotikh_2$ (\ref{ObstructionCompatibilityAssComm}), satisfying the compatibility conditions (\ref{diagram:hexagone1}), (\ref{diagram:hexagone2}), (\ref{diagram:hexagone3}), (\ref{diagram:hexagone3bis}),  (\ref{diagram:hexagone1_vs_hexagone2}), (\ref{diagram:hexagone1_vs_hexagone2_bis}), (\ref{diagram:Z_system}), are equivalent to the 3-morphisms of $\twoG_{\twoP^3}$-torsors $\gotikc_1$ and $\gotikc_2$ with their compatibility conditions (7) (remark that condition (\ref{diagram:hexagone1}) corresponds to (\ref{ext-tor:2}) and condition (\ref{diagram:hexagone2}) corresponds to (\ref{ext-tor:4})).  Finally, the modification $\eta$ (\ref{Strictness}), satisfying $\eta \ast \eta=\zeta$ and the compatibility condition (\ref{diagram:additivity_of_braiding}), is equivalent to the 3-morphism of $\twoG_{\twoP}$-torsors $\gotikp$ with its compatibility conditions (8).

(II) Now suppose we have the data $(\twoE,M,\alpha,\gotika,\chi,\gotiks,\gotikc_1,\gotikc_2)$ given in (1)-(8). The morphism of $\twoG_{\twoP^2}$-torsors $M : p_1^* \; \twoE \wedge p_2^* \; \twoE \lra \otimes^* \; \twoE$ over $\twoP \times \twoP$ defines a group law $\otimes: \twoE \times \twoE \ra \twoE$ on the $\ES$-2-stack of 2-groupoids $\twoE$.
The data $\alpha$ and $\chi$ furnish the associativity $\ass:\otimes \circ (\otimes \times \id_\twoE) \Ra \otimes \circ (\id_\twoE \times \otimes)$ and the braiding $\comm : \otimes \circ \mathsf{s} \Ra \otimes$ which express respectively the associativity and the commutativity constraints of the group law $\otimes$ of $\twoE$. As already observed in (I), the data $\gotika,\gotiks,\gotikc_1,\gotikc_2,\gotikp$ give respectively the modifications of $\ES$-2-stacks $\pi$ (\ref{ObstructionCoherenceAss}), $\zeta$ (\ref{ObstructionCoherenceComm}), $\gotikh_1$, $\gotikh_2$  (\ref{ObstructionCompatibilityAssComm}), $\eta$ (\ref{Strictness}), with their coherence and compatibility conditions.
Since any morphism of $\twoG$-torsors is an equivalence of $\ES$-2-stacks, the morphism of $\twoG_{\twoP^2}$-torsors $M: p_1^* \; \twoE \wedge p_2^* \; \twoE \lra \otimes^* \; \twoE$ implies that for any object $a \in \twoE$, the left multiplication by $a$, $a \otimes -:\twoE \ra \twoE,$ is an equivalence of $\ES$-2-stacks. By \cite{MR2342833} this property of the left multiplication to be an equivalence implies that $\twoE$ admits a global neutral object $e$ and that any object of $\twoE$ admits an inverse. \\
If $J: \twoE \ra \twoP$ denotes the morphism of $\ES$-2-stacks underlying the structure of $\twoG_\twoP$-torsor over $\twoP$, $J$ must be a surjection on the equivalence classes of objects, i.e. $\pi_0(J): \pi_0(\twoE) \rightarrow \pi_0(\twoP)$ is surjective. Moreover the compatibility of $J$ with the morphism of $\twoG_{\twoP^2}$-torsors $M : p_1^* \; \twoE \wedge p_2^* \; \twoE \lra \otimes^* \; \twoE$ over $\twoP \times \twoP$ implies that $J$ is an additive 2-functor. There is a global equivalence of $\twoG$-torsors between $\twoG$ and the pull-back $ \mathbf{0}^*\twoE$ of $\twoE$ via $\mathbf{0}: \mathbf{0} \rightarrow \twoP$ which is given by sending the global neutral object $0_{\twoG}$ of $\twoG$ to the global neutral object $(0_{\twoP},\ell,0_{\twoE})$ of $\mathbf{0}^*\twoE$, where $\ell$ is the 1-arrow $0_{\twoP} \ra J(0_{\twoE}) $ in $\twoP$. Let $I$ be the composite $\twoG \cong \mathbf{0}^*\twoE = \twoKer(J) \ra \twoE$. Clearly $I$ is an additive 2-functor. We can conclude that $(\twoE, I,J)$ is an extension of $\twoP$ by $\twoG$.
\end{proof}

As a consequence of this Proposition we get Theorem \ref{thm:introexttor}.

\section{Right Resolution of $\Ext(\twoP,\twoG)$}

A cochain complex of Picard $\ES$-2-stacks $ 
\hdots \rightarrow \twoL^{i-1} \stackrel{D^{i-1}}{\rightarrow}  \twoL^{i} \stackrel{D^{i}}{\rightarrow}  \twoL^{i+1} \stackrel{D^{i+1}}{\rightarrow}  \hdots,$
consists of Picard $\ES$-2-stacks $\twoL^i$ for $i \in \mathbb{Z}$,
 additive 2-functors $D^i: \twoL^i \ra \twoL^{i+1}$,
 morphisms of additive 2-functors $\partial^i: D^{i+1} \circ D^i \Ra 0$, and 
 modifications of morphisms of additive 2-functors 
\begin{equation}\label{diagram:cochain_complex_1}
\begin{tabular}{c}
\xymatrix@C=1.25cm{ (D^{i+2}D^{i+1})D^i \ar@2[r]^{\ass} \ar@2[d]_{\partial^{i+1} * D^i} &D^{i+2}(D^{i+1}D^i) \ar@2[r]^(0.6){D^{i+2} * \partial^i} \ar@{}[d]|{\kesir{\rotatebox{90}{$\Lla$}}{\Delta_{(i+2,i+1,i)}}}&  D^{i+2}0 \ar@2[d]\\
0 D^i \ar@2[rr] && 0 }
\end{tabular}
\end{equation}
which satisfy the following equation of modifications: the pasting of the modifications
\begin{equation*}
\begin{tabular}{c}
\xymatrix{ &(D^{i+3}(D^{i+2}D^{i+1}))D^{i} \ar@2[r] \ar@2[d] \ar@{}[dr]|{\kesir{\mbox{\rotatebox{90}{$\Lla$}}}{\circ_{(\ass,\partial^{i+1})}}} & D^{i+3}((D^{i+2}D^{i+1})D^{i}) \ar@2[d] \ar@2[r] & D^{i+3}(D^{i+2}(D^{i+1}D^{i})) \ar@2[d] \\
((D^{i+3}D^{i+2})D^{i+1})D^{i} \ar@{}[r]|(0.55){\kesir{\mbox{\rotatebox{90}{$\Lla$}}}{\Delta_{(i+3,i+2,i+1)}*D^{i}}}\ar@2[ur] \ar@2[d]&(D^{i+3}0)D^{i} \ar@2[r] \ar@2[d] & D^{i+3}(0D^{i}) \ar@2[dr] \ar@{}[dl]|{\simeq} \ar@{}[r]|{\kesir{\mbox{\rotatebox{90}{$\Lla$}}}{D^{i+3}*\Delta_{(i+2,i+1,i)}}}& D^{i+3}(D^{i+2}0) \ar@2[d]\\
(0D^{i+1})D^{i} \ar@2[r]&0D^{i} \ar@2[r] &0 \ar@2[r] &D^{i+3}0}
\end{tabular}
\end{equation*} 
is equal to the pasting of the modifications in the diagram below
\begin{equation*}
\begin{tabular}{c}
\xymatrix{&((D^{i+3}D^{i+2})D^{i+1})D^{i} \ar@2[dr] \ar@2[dl] \ar@2[r] \ar@{}[dd]|{\kesir{\mbox{{$\Lla$}}}{\circ_{(\ass,\partial^{i+2})}}} & (D^{i+3}(D^{i+2}D^{i+1}))D^{i} \ar@2[r] \ar@{}[dr]|{\kesir{\mbox{\rotatebox{90}{$\Lla$}}}{\pi_{(i+3,i+2,i+1,i)}}} &D^{i+3}((D^{i+2}D^{i+1})D^{i}) \ar@2[d]\\
(0D^{i+1})D^{i} \ar@2[d] \ar@2[dr] && (D^{i+3}D^{i+2})(D^{i+1}D^{i}) \ar@{}[dr]|{\kesir{\mbox{{$\Lla$}}}{\circ_{(\ass,\partial^{i})}}} \ar@2[dl] \ar@2[d] \ar@2[r] & D^{i+3}(D^{i+2}(D^{i+1}D^{i})) \ar@2[d]\\
0D^{i} \ar@2[dr] \ar@{}[r]|{\simeq} & 0(D^{i+1}D^{i}) \ar@2[d] \ar@{}[r]|{\kesir{\mbox{{$\Lla$}}}{\circ_{(\partial^{i+2},\partial^{i})}}} & (D^{i+3}D^{i+2})0 \ar@2[dl] \ar@2[r] \ar@{}[dr]|{\simeq} & D^{i+3}(D^{i+2}0)\ar@2[d]\\
& 0 \ar@2[rr] && D^{i+3}0.}
\end{tabular}
\end{equation*}

Let $\twoG$ be a Picard $\ES$-2-stack and let
$ \twoL^{.}:  0 \rightarrow \twoT   \stackrel{D^{\twoT}}{\rightarrow} \twoS \stackrel{D^{\twoS}}{\rightarrow} \twoR
\stackrel{D^{\twoR}}{\rightarrow} \twoQ \stackrel{D^{\twoQ}}{\rightarrow}  \twoP \rightarrow 0$
be a complex of Picard $\ES$-2-stacks with $\twoP$, $\twoQ$, $\twoR$, $\twoS$, and $\twoT$ in degrees 0,-1, -2, -3 and -4, respectively. To the complex $\twoL^{.}$ and to $\twoG$, we associate a 3-category $\Psi_{\twoL^{.}}(\twoG)$ which we can see as the 3-category of extensions of complexes of Picard $\ES$-2-stacks of $\twoL^{.}$ by $\twoG$, considering $\twoG$ as a complex concentrated in degree 0. This 3-category is a generalization to Picard $\ES$-2-stacks of the one introduced by Grothendieck in \cite{SGA7} for abelian sheaves.

\begin{definition}\label{psi}
Let $\Psi_{\twoL^{.}}(\twoG)$ be the 3-category 
\begin{itemize}
	\item whose objects are pairs $(\twoE,T)$ where 
 $\twoE=(I:\twoG \ra \twoE,\twoE,J:\twoE \ra \twoP,\varepsilon)$ is an extension of $\twoP$ by $\twoG$ and
 $T=(T,\mu,\Upsilon)$ is a trivialization of the extension $(D^{\twoQ})^*\twoE$ of $\twoQ$ by $\twoG$  obtained as pull-back of $\twoE$ via $D^{\twoQ}: \twoQ \rightarrow \twoP$. We require that the trivialization $T$ is compatible with the complex $\twoL^{.}$, i.e. it satisfies the following conditions:
\begin{enumerate}
	\item the trivialization $(D^{\twoR})^*T$ of $(D^{\twoR})^*  (D^{\twoQ})^*\twoE$ is the trivialization arising from the equivalence of transitivity $ (D^\twoR)^* (D^\twoQ)^* \twoE \cong  (D^\twoQ \circ D^\twoR)^*\twoE$ and from the morphism of additive 2-functors $\partial^\twoR: D^\twoQ \circ D^\twoR \Rightarrow 0$;
	\item the morphism of additive 2-functor $(D^\twoS)^* (D^\twoR)^* T \Rightarrow 0$ arises from the 2-isomorphism of transitivity $ (D^\twoS)^* (D^\twoR)^* T \cong  (D^\twoR \circ D^\twoS)^*T$ and from the morphism of additive 2-functors $\partial^\twoS: D^\twoR \circ D^\twoS \Rightarrow 0$;
	\item the morphism of additive 2-functor $(D^\twoT)^* (D^\twoS)^* (D^\twoR)^* T \Rightarrow 0$ is compatible with the modification of morphisms of additive 2-functors $\Delta_{(\twoT,\twoS,\twoR)}$ (\ref{diagram:cochain_complex_1}) underlying the complex $ \twoL^{.}$.
\end{enumerate}
	\item whose 1-arrows are given by triplets $(F,\sigma, \Sigma): (\twoE,T) \ra (\twoE',T')$ where $F:\twoE \ra \twoE'$ is a morphism of extensions of Picard $\ES$-2-stacks (inducing the identity on $\twoG$ and $\twoP$), $\sigma: F \circ T \Rightarrow T'$ is a morphism of additive 2-functors, and $\Sigma$ is a modification of morphisms of additive 2-functors 
\begin{equation*}
\begin{tabular}{c}
\xymatrix{(J'F)T \ar@2[r] \ar@2[d]_{}& J'(FT) \ar@2[r]^(0.6){} \ar@{}[d]|{\kesir{\rotatebox{90}{$\Lla$}}{\Sigma}}  & J'T' \ar@2[d]\\
JT \ar@2[rr]_{} &&\id_\twoP.}
\end{tabular}
\end{equation*}	
	\item whose 2-arrows are pairs $(\alpha, \Omega): (F,\sigma, \Sigma) \Rightarrow (F',\sigma', \Sigma')$ where $\alpha: F \Ra F'$ is a 2-morphism of extensions of Picard $\ES$-2-stacks, $\Omega: \sigma' \circ \alpha \Rra \sigma$ is a modification of morphisms of additive 2-functors which is compatible with the modifications $\Sigma$ and $\Sigma'$. 
	\item whose 3-arrows $\Delta: (\alpha, \Omega) \Rra (\alpha', \Omega') $  are 3-morphisms of extensions of Picard $\ES$-2-stacks $\Delta: \alpha \Rra \alpha'$ which are compatible with the modifications $\Omega$ and $\Omega'$. 
\end{itemize}
\end{definition}

For the notion of $i$-morphism of extensions of Picard $\ES$-2-stacks ($i=1,2,3$) we refer to \cite[\S 5]{beta}.

 Let $\Psi_{\twoL^{.}}^1(\twoG)$ be the abelian group of equivalence classes of objects of $\Psi_{\twoL^{.}}(\twoG)$ (its abelian group law is furnished by the sum of extensions of Picard $\ES$-2-stack \cite[Def 7.4]{beta}). For $i=0,-1,-2$ let $\Psi_{\twoL^{.}}^i(\twoG)$ be the abelian homotopy group $\pi_{-i}(\Hom_{\Psi_{\twoL^{.}}(\twoG)}((\twoE,T),(\twoE,T)))$ 
 of the hom-2-groupoid $\Hom_{\Psi_{\twoL^{.}}(\twoG)}((\twoE,T),(\twoE,T))$ of morphisms of an object $(\twoE,T)$ of $\Psi_{\twoL^{.}}(\twoG)$ to itself 
 (since $\Hom_{\Psi_{\twoL^{.}}(\twoG)}((\twoE,T),(\twoE,T))$ is equivalent to the homotopy kernel $\twoKer\big(D^\twoQ:{\Hom}(\twoP,\twoG)  \rightarrow {\hhom}(\twoQ,\twoG)\big)$, it is endowed with a Picard $\ES$-2-stack structure and its homotopy groups are abelian groups).
 Generalizing \cite[Thm 8.2]{MR2995663} to Picard $\ES$-2-stacks, we have the following homological description of $\Psi_{\twoL^{.}}^i(\twoG)$:
\begin{equation}\label{eq:psi-ext}
\Psi_{\twoL^.}^i(\twoG) \cong {\eext}^i\big({\Tot}([\twoL^.]),[\twoG]\big)= {\hhom}_{\der}\big({\Tot}([\twoL^.]),[\twoG][i]\big) \qquad i=-2,-1,0,1.
\end{equation}
In general, additive 2-functors do not correspond to morphisms of complexes. To simplify the computation of the isomorphisms (\ref{eq:psi-ext}), we assume that the additive 2-functors of the complex $ \twoL^.$ arise from morphisms of length 3 complexes (we have proceeded in this way also in \cite{MR2995663}). This is not restrictive since 
if $\twoP$ is a Picard $\ES$-2-stack, Lemma \ref{lemma:generated_Pic_2_stack} furnishes an explicit description of the length 3 complex associated to ${\mathbb{Z}}[\twoP]$, and this allows us to define degree-wise the differentials $D_i$  underlying the complex $\twoL.(\twoP)$ of Corollary \ref{corollary:resolutionP}, i.e. the differentials $D_i$ (\ref{complex_L(P)}) are in fact morphisms of complexes. 

Let
$ \twoL^{.}:  0 \rightarrow \twoT   \stackrel{D^{\twoT}}{\rightarrow} \twoS \stackrel{D^{\twoS}}{\rightarrow} \twoR
\stackrel{D^{\twoR}}{\rightarrow} \twoQ \stackrel{D^{\twoQ}}{\rightarrow}  \twoP \rightarrow 0$ and 
$ \twoL^{'.}:  0 \rightarrow \twoT'   \stackrel{D^{\twoT'}}{\rightarrow} \twoS' \stackrel{D^{\twoS'}}{\rightarrow} \twoR'
\stackrel{D^{\twoR'}}{\rightarrow} \twoQ' \stackrel{D^{\twoQ'}}{\rightarrow}  \twoP' \rightarrow 0$
be two complexes of Picard $\ES$-2-stacks with $\twoP, \twoP'$ in degree 0, $\twoQ, \twoQ'$ in degree -1,
$\twoR, \twoR'$ in degree -2, $\twoS, \twoS'$ in degree -3, and 
 $\twoT, \twoT'$ in degree -4. For any Picard $\ES$-2-stack $\twoG$,
a morphism $F^{.}=(F^{-4},F^{-3},F^{-2},F^{-1},F^0): \twoL^{'.} \rightarrow \twoL^{.} $ of complexes of Picard $\ES$-2-stacks induces a canonical 3-functor 
$(F^{.})^*: \Psi_{\twoL^{.}} (\twoG)\rightarrow \Psi_{ \twoL^{'.}}(\twoG):$
if $(\twoE,T)$ is object of $\Psi_{\twoL^{.}}(\twoG)$, we set $(F^.)^*(\twoE,T)=((F^0)^*\twoE,(F^{-1})^*T)$ with
  $(F^0)^*\twoE$ the extension of $\twoP'$ by $\twoG$
obtained as pull-back of $\twoE$ via $F^0: \twoP' \rightarrow \twoP$, and 
    $(F^{-1})^*T $ the trivialization of $(D^{\twoQ '})^*(F^0)^*\twoE$ induced by the trivialization $T$ of $(D^{\twoQ})^*\twoE$.

 \begin{lemma}\label{lemma:PsiHH}  
The 3-functor $(F^{.})^* :\Psi_{\twoL^{.}} (\twoG)\rightarrow \Psi_{ \twoL^{'.}}(\twoG)$ is a tri-equivalence if and only if 
$\HH^i(\Tot(F^{.})) : \HH^i(\Tot([\twoL^{'.}]^{\flat\flat})) \rightarrow \HH^i(\Tot([\twoL^{.}]^{\flat\flat}))$ is an isomorphism for any $i$. 
\end{lemma}

\begin{proof} For $i=-2,-1,0,1$ we have the following commutative diagram
\[
\begin{array}{ccc}
 \Psi_{\twoL^.}^i(\twoG)&\rightarrow &{\eext}^i\big({\Tot}([\twoL^.]),[\twoG]\big)\\
 \downarrow &  & \downarrow \\
 \Psi_{\twoL'^.}^i(\twoG)& \rightarrow & {\eext}^i\big({\Tot}([\twoL'^.]),[\twoG]\big),
\end{array}
\]
where the vertical arrow on the left side is induced by the 3-functor $(F^.)^*: \Psi_{\twoL^.}(\twoG) \rightarrow \Psi_{\twoL'^.}(\twoG)$, 
the vertical arrow on the right side is induced by the morphism of complexes  $F^.: \twoL'^. \rightarrow \twoL^.$, and the horizontal arrows are the isomorphisms~(\ref{eq:psi-ext}).
The 3-functor $(F^.)^*: \Psi_{\twoL^.}(\twoG) \rightarrow \Psi_{\twoL'^.}(\twoG)$ is a tri-equivalence if and only if the vertical arrow on the left side
 is an isomorphism for $i=-2,-1,0,1$. Hence 
we are reduced to prove that the vertical arrow on the right side
 is an isomorphism for $i=-2,-1,0,1$ if and only if 
 ${\HH}^i\big({\Tot}(F^.)\big):{\HH}^i\big({\Tot}([\twoL'^.])\big) \rightarrow {\HH}^i\big({\Tot}([\twoL^.])\big)$ 
are isomorphisms for each $i$. This last assertion is clearly true.
\end{proof}

Now we switch from cohomological to homological notation.
To any Picard $\ES$-2-stack $\twoP$, we associate 
 the complex $\twoL.(\twoP)$ of Picard $\ES$-2-stacks which is defined in Corollary \ref{corollary:resolutionP}. Let $\twoG$ be a Picard $\ES$-2-stack. We have the following geometrical description of the 3-category $\Psi_{\twoL.(\twoP)}(\twoG)$:

\begin{proposition}\label{prop:ext-geom}
The 3-category ${\Ext}(\twoP,\twoG)$ of extensions of $\twoP$ by $\twoG$ is tri-equivalent to the 3-category $\Psi_{\twoL.(\twoP)}(\twoG)$.
\end{proposition}

\begin{proof} 
By Corollary \ref{corollary:Z[I]exttor}, an object $(\twoE,T)$ of $\Psi_{\twoL.(\twoP)}(\twoG)$ consists of a $\twoG_\twoP$-torsor $\twoE$ and a trivialization $T$ of the $\twoG_{\twoP^2}$-torsor $D_2^*\twoE$ obtained as pull-back of $\twoE$ via $D_2$. This trivialization can be interpreted as a morphism of $\twoG_{\twoP^2}$-torsors $M: p_1^* \; \twoE \wedge p_2^* \; \twoE \ra \otimes^* \; \twoE$, where $p_i: \twoP \times \twoP  \ra \twoP$ is the $i$-th projection of $\twoP \times \twoP $ on $ \twoP$  and $ \otimes: \twoP \times \twoP  \ra \twoP $ is the group law of $ \twoP$.\\
Concerning the compatibility between the trivialization $T$ and the complex $\twoL.(\twoP)$, we have:
\begin{enumerate}
	\item through the two torsors over $\twoP^3$ and $\twoP^2$, the compatibility of $T$ with $\partial_1: D_2 \circ D_3 \Ra 0 $ imposes on the data $\twoE$ and $M$ the 2-morphism of $\twoG_{\twoP^3}$-torsors $\alpha$ described in Proposition \ref{thm:ext-tor} (3) and the 2-morphism of $\twoG_{\twoP^2}$-torsors $\chi$ described in Proposition \ref{thm:ext-tor} (5);
	\item through the five torsors over $\twoP^4,\twoP^3,\twoP^3,\twoP^2$ and $\twoP$, the compatibility between $D_4^*  D_3^* T \Ra 0 $ and $\partial_2: D_3 \circ D_4 \Ra 0 $ imposes on the data $\alpha$ and $\chi$ the 3-morphism of $\twoG_{\twoP^4}$-torsors $\gotika$, the two 3-morphisms of $\twoG_{\twoP^3}$-torsors $\gotikc_1$ and $\gotikc_2$ and the 3-morphism of $\twoG_{\twoP^2}$-torsors $\gotiks$ and the 3-morphism of $\twoG_{\twoP}$-torsors $\gotikp$,
		which are described respectively in Proposition \ref{thm:ext-tor} (4), (7), (6) and (8);
   \item through the ten  torsors over $\twoP^5, \twoP^4, \twoP^4, \twoP^4, \twoP^3, \twoP^3, \twoP^3, \twoP^2, \twoP$, and $\twoP^2$,  the compatibility between $D_5^*D_4^* D_3^* T \Ra 0 $ and  $\Delta_{(D_3, D_4,D_5)} $ imposes on the datum $\gotika$ the equality (\ref{ext-tor:1}), on the data $\gotikc_1, \gotikc_2$ the equalities (\ref{ext-tor:2}), (\ref{ext-tor:4}) and the compatibility condition obtained from (\ref{diagram:Z_system}) by replacing $\zeta$ with $\gotiks$, $\gotikh_i$ with $\gotikc_i$ (for $i=1,2$), on the datum $\gotiks$ the equation of 2-arrows obtained from (\ref{CoherenceZeta}) by replacing $\comm$ with $\chi$ and $\zeta$ with $\gotiks$, and finally on the datum $\gotikp$ the equality $\gotikp*\gotikp=\gotiks$ and the compatibility condition obtained from  (\ref{diagram:additivity_of_braiding}) by replacing $\pi$ with $\gotika$, $\zeta$ with $\gotiks$, $\gotikh_i$ with $\gotikc_i$ (for $i=1,2$), $\eta$ with $\gotikp$.
\end{enumerate}
Hence by Proposition \ref{thm:ext-tor} the object $(\twoE,M,\alpha,\gotika,\chi,\gotiks,\gotikc_1,\gotikc_2)$ of $\Psi_{\twoL.(\twoP)}(\twoG)$ is an extension of $\twoP$ by $\twoG$. The remaining detail are left to the reader. 
\end{proof}

\begin{proof}[Proof of Corollary \ref{corollary:resolutionP}]
Consider the morphism of complexes $\epsilon. : \twoL.(\twoP) \ra \twoP$ defined by the additive 2-functor $\epsilon : {\mathbb{Z}}[\twoP] \ra \twoP, \epsilon([p])=p$ for any $p \in \twoP$ (here we consider $\twoP$ as a complex concentrated in degree 0). Since by definition $\Psi_{\twoP} (\twoG)$ is tri-equivalent to  ${\Ext}(\twoP,\twoG)$, Proposition \ref{prop:ext-geom} implies that the 3-functor
$(\epsilon.)^*: \Psi_{\twoP} (\twoG) \rightarrow \Psi_{\twoL.(\twoP)} (\twoG)$ is a tri-equivalence. Hence by Lemma \ref{lemma:PsiHH}, $\HH_i(\Tot(\epsilon.)):\HH_i(\Tot([\twoL.(\twoP)]^{\flat\flat})) \rightarrow \HH_i(\Tot([\twoP]^{\flat\flat}))$ is an isomorphism for any $i$.
\end{proof}

Before to prove Corollary \ref{corollary:resolution}, let's first state the exactness in $2\PICARD$. A 2-functor $F:\twoA \ra \twoB$ is 
\begin{itemize}
\item \emph{essentially surjective} if for any object $x$ of $\twoB$, there exists an object $a$ of $\twoA$ so that $F(a)$ is equivalent to $x$;
\item \emph{full} if for any two objects $a,b$ of $\twoA$, the functor $F_{(a,b)}:H_{\twoA}(a,b) \ra H_{\twoB}(Fa,Fb)$ is essentially surjective and full.
\end{itemize}
Thus we say that a cochain complex of Picard Picard $\ES$-2-stacks $\hdots \rightarrow \twoL^{i-1} \stackrel{D^{i-1}}{\rightarrow}  \twoL^{i} \stackrel{D^{i}}{\rightarrow}  \twoL^{i+1} \stackrel{D^{i+1}}{\rightarrow}  \hdots$ is exact at $\twoL^i$ if the additive 2-functor $\tilde{D}^{i-1}: \twoL^{i-1} \ra \twoKer(D^i)$ is full and essentially surjective. We notice that we will work in $2\FPICARD$, so the notion of essentially surjective and full will be more strict. Upon defining correct notions of full and essentially surjective, one can generalize this definition to definition of exactness in $3{\mathcal{P}\textsc{icard}^{\flat \flat \flat}(\ES)}$.

\begin{proof}[Sketch of the proof of Corollary \ref{corollary:resolution}]
We have to show that the long sequence 
\begin{align}
 \nonumber 0 &\ra \Ext(\twoG,\twoP) \stackrel{U}{\ra} \TORS(\twoG_\twoP) \stackrel{D_2^*}{\ra}  \TORS(\twoG_{\twoP^2}) \stackrel{D_3^*}{\ra} \TORS(\twoG_{\twoP^3})  \times \TORS(\twoG_{\twoP^2}) \stackrel{D_4^* }{\ra}...\\
 \nonumber ... &\stackrel{D_4^* }{\ra} \TORS(\twoG_{\twoP^4})  \times \TORS(\twoG_{\twoP^3})^2 \times \TORS(\twoG_{\twoP^2}) \times \TORS(\twoG_{\twoP})\stackrel{D_5^* }{\ra} ... \\ 
\nonumber ...  &\stackrel{D_5^* }{\ra}   \TORS(\twoG_{\twoP^5})  \times \TORS(\twoG_{\twoP^4})^3 \times \TORS(\twoG_{\twoP^3})^3 \times \TORS(\twoG_{\twoP^2})  \times \\
&\nonumber  \TORS(\twoG_{\twoP}) \times \TORS(\twoG_{\twoP^2}) \ra 0,
\end{align}
where $U$ is the forgetful functor  and $D_i^* $ denotes the pull-back via the differential operator $D_i$, is exact:

- Exactness in ${\Ext}(\twoP,\twoG)$: By Theorem \ref{thm:introexttor}, an object in $\Ext(\twoG,\twoP)$ is an object in $\Ext(\mathbb{Z}[\twoG],\twoP)$ with some extra structure. Therefore we can define $U$ as the 2-functor that sends an extension to itself and forgets the extra structure. Then the 2-functor $0 \ra \twoKer(U)$ is clearly essentially surjective and full.

- Exactness in $\TORS(\twoG_\twoP)$: We need to show that $ \tilde{U} :\Ext(\twoG,\twoP) \ra \twoKer (D_0^* )$ is essentially surjective and full. Let $\twoE$ be an object in $\Ext(\mathbb{Z}[\twoG],\twoP)$ whose pull-back via $D_2$ becomes trivial, i.e. $D_2^*\twoE$ is endowed with a trivialization $T$. Then $(\twoE, T)$ is an object in $\Psi_{\twoL.(\twoP)}(\twoG)$. By Proposition \ref{prop:ext-geom}, there exists an object $\twoE'$ in $\Ext(\twoG,\twoP)$ so that $\tilde{U}(\twoE')=\twoE$. This shows that $ \tilde{U}$ is essentially surjective.

- Exactness at the other terms follows from the 
free resolution of the Picard 2-stack computed in Corollary \ref{corollary:resolutionP}.
\end{proof}

\section{Example: Higher Extensions of Abelian Sheaves}\label{sec:example}

\subsection{The canonical free resolution $\twoL.(-)$ in the case of an abelian sheaf}

Here we take a closer look at the resolution $\twoL.(\twoP)$ given in Corollary \ref{corollary:resolutionP} when the Picard $\ES$-2-stack $\twoP$ is an abelian sheaf $P$. In this case, we denote $\twoL.(\twoP)$ by $L.(P)$. 

In \cite{EilenbergMaclane} Eilenberg and MacLane attach to any abelian group $G$ a complex
of free abelian groups $A(G)$. As explained in \cite{Breen69}, Eilenberg and MacLane's construction extends by functoriality to abelian sheaves. If $P$ is an abelian sheaf, the entries of the Eilenberg and MacLane's complex $A(P)$ in lower degrees are
\begin{align}
\nonumber A(P)_i&=0, \mathrm{~for~}i \leq 0;\\
\nonumber A(P)_1&=\mathbb{Z}[P];\\
\nonumber A(P)_2&=\mathbb{Z}[P^2];\\
\nonumber A(P)_3&=\mathbb{Z}[P^3]\oplus \mathbb{Z}[P^2];\\
\nonumber A(P)_4&=\mathbb{Z}[P^4]\oplus \mathbb{Z}[P^3]\oplus \mathbb{Z}[P^3] \oplus \mathbb{Z}[P^2];\\
\nonumber A(P)_5&=\mathbb{Z}[P^5]\oplus \mathbb{Z}[P^4]\oplus \mathbb{Z}[P^4] \oplus \mathbb{Z}[P^4]\oplus \mathbb{Z}[P^3]\oplus \mathbb{Z}[P^3] \oplus \mathbb{Z}[P^3]\oplus \mathbb{Z}[P^2]
\end{align}
where the differentials $\partial_i: A(P)_i \ra A(P)_{i-1}$ defined on the generators are
\begin{align}\label{complex_A(P)}
\partial_1&=0;\\[0.1cm]
\nonumber \partial_2[p |_1 q] &= [p+q] -[p]-[q];\\[0.1cm]
\nonumber \partial_3[p |_2 q] &= [p |_1 q] -[q |_1 p];\\[0.1cm]
\nonumber \partial_3[p |_1 q |_1r] &= [p+q |_1 r] -[p |_1 q+r]+[p |_1 q]-[q |_1 r];\\[0.1cm]
\nonumber \partial_4[p |_1 q |_1 r |_1 s] &= [p |_1 q |_1 r] +[p |_1 q+r |_1 s]+[q |_1 r |_1 s] -[p+q |_1 r |_1 s] -[p |_1 q |_1 r+s ];\\[0.1cm]
\nonumber \partial_4[p |_2 q |_1 r] &= [q |_1 r |_1 p] +[p |_2 q+r]+[p |_1 q |_1 r]-[q |_1 p |_1 r]-[p |_2 q]-[p |_2 r];\\[0.1cm]
\nonumber \partial_4[p |_1 q |_2 r] &= [p |_1 r |_1 q] +[p+q |_2 r]-[p  |_1 q |_1 r]-[r |_1 p |_1 q]-[p |_2 r]-[q |_2 r];\\[0.1cm]
\nonumber \partial_4[p |_3 q]&=-[p |_2 q]-[q |_2 p];\\[0.1cm]
\nonumber \partial_5[p |_1 q |_1 r |_1 s |_1 t] &=[q |_1 r |_1 s |_1 t]+[p |_1 q+r |_1 s |_1 t]+[p |_1 q |_1 r |_1 s+t] -[p |_1 q |_1 r+s |_1 t] \\[0.1cm]
\nonumber &-[p |_1 q |_1 r |_1 s]-[p+q |_1 r |_1 s |_1 t];\\[0.1cm]
\nonumber \partial_5[p |_2 q |_1 r |_1 s] &=[p |_1 q |_1 r |_1 s]+[p |_2 q |_1 r+s]+[p |_2 r |_1 s] -[q |_1 p |_1 r |_1 s]- [p |_2 q+r |_1 s] \\[0.1cm]
\nonumber &-[q |_1 r |_1 s |_1 p]+[q |_1 r |_1 p |_1 s]-[p |_2 q |_1 r];\\[0.1cm]
\nonumber \partial_5[p |_1 q |_1 r |_2 s ] &= -[p |_1 q |_1 r |_1 s]+[p+q  |_1 r |_2 s]+[p |_1 q |_1 s |_1 r]+[p |_1 q |_2 s]+[s |_1 p |_1 q |_1 r]\\[0.1cm]
\nonumber &-[p |_1 q+r |_2 s]-[p |_1 s |_1 q |_1 r]-[q |_1 r |_2 s];\\[0.1cm]
\nonumber \partial_5[p |_1 q |_2 r |_1 s ] &= [p+q |_2 r |_1 s]-[p |_2 r |_1 s]-[q |_2 r |_1 s]-[p |_1 q |_2 r+s]+[p |_1 q |_2 r]+[p |_1 q |_2 s]\\[0.1cm]
\nonumber & +[p |_1 q |_1 r |_1 s]+[p |_1 r |_1 s |_1 q]+[r |_1 s |_1 p |_1 q]+[r |_1 p |_1 q |_1 s]-[p |_1 r |_1 q |_1 s]\\[0.1cm]
\nonumber & -[r |_1 p |_1 s |_1 q];\\[0.1cm]
\nonumber \partial_5[p  |_3 q  |_1 r]&=[p  |_3 q+r]+[p  |_2 q  |_1 r]+[q  |_1 r  |_2 p]-[p  |_3 r]-[p  |_3 q];\\[0.1cm]
\nonumber \partial_5[p  |_1 q  |_3 r]&=[p+q  |_3 r]+[p  |_1 q  |_2 r]+[r  |_2 p  |_1 q]-[p  |_3 r]-[q  |_3 r];\\[0.1cm]
\nonumber \partial_5[p  |_2 q  |_2 r]&=[p  |_2 q  |_1 r]-[p  |_2 r  |_1 q]+[p  |_1 q  |_2 r]-[q  |_1 p  |_2 r];\\[0.1cm]
\nonumber \partial_5[p |_4 q]&=[p  |_3 q]-[q  |_3 p];
\end{align}

While they are not exactly the same, the complex $L.(P)$ and the complex $A(P)$ posses similarities. In fact, we observe that the entries of the complex $L.(P)$ and the entries of the complex $A(P)$ are the same in degrees 1, 2, and 3, as well as the differentials $D_2$, $D_3$ and $\partial_2$, $\partial_3$, respectively. However, the entries in degrees 4 and 5 of the complex $L.(P)$ contain some extra terms in addition to the terms of $A(P)$'s entries in degrees 4 and 5. To be more precise, there is one extra generator $[p]$ in degree 4 and a differential $D_4[p]$ associated to it and in degree 5 there are two extra generators $[p]$ and $[p|^4q]$ and two differential operators, $D_5[p]$ and $D_5[p|^4q]$. These extra generators and differentials arise from the strictness of the Picard condition. 

\subsection{Computation of $\eext^3(P,G)$ using the canonical free resolution $L.(P)$ of $P$}

To understand better the complex $L.(P)$, we examine $\eext^3(P,G)$ with $P$ and $G$ abelian sheaves. From \cite{Huebs1980}, it is known that $\eext^3(P,G)$ classifies Yoneda extensions of the form 
\begin{equation}\label{eqn:1}
\xymatrix@1{0\ar[r] & G \ar[r]^i & A \ar[r]^{\delta} & B \ar[r]^{\lambda} & C \ar[r]^{j} & P \ar[r] & 0}.
\end{equation}
Moreover, from \cite[Theorem 0.1]{beta}, it is also known that $\Ext^i(\twoP,\twoG) \cong \eext^i([\twoP]^{\flat\flat} ,[\twoG]^{\flat\flat}[1]) \cong \HH^{i+1}(\mathrm{R}\hhom([\twoP]^{\flat\flat} ,[\twoG]^{\flat\flat}))$. In case, $\twoP$ is the abelian sheaf $P$ and $\twoG$ is the shifted abelian sheaf $G[2]$, 
\begin{equation}\label{extensions_vs_cocycles}
\eext^3(P,G) \cong \HH^4(\mathrm{R}\hhom(P,G)).
\end{equation}

To calculate the element of $\HH^4(\mathrm{R}\hhom(P,G))$ which corresponds to the extension (\ref{eqn:1}) via the isomorphism (\ref{extensions_vs_cocycles}), we choose a hypercover $V.$ of the complex $L.(P)$ as follows: let  $U.. \ra L.(P)$ be a cover of $L.(P)$ given by the simplicial object $U..$ in the topos of sheaves on $\ES$ (see \cite{MR676809}). The pullback along $U.. \ra L.(P)$ is performed by refining the cover as we move along the complex $L.(P)$. Moving to the next degree on $L.(P)$ corresponds to a horizontal movement on $U..$ and therefore increases the first index of $U..$ by 1 whereas refining the cover corresponds to a vertical movement on $U..$ and therefore increases the second index of $U..$ by 1. That is, the pullback along $U.. \ra L.(P)$ follows the diagonal of $U..$ which is also a simplicial object in the topos of sheaves on $\ES$.  Thus, we let $V.$ to be the diagonal of $U..$. We denote by $p_i$ the pullback of $p$ along the face map $d_i$ of $V.$, i.e. $d_i^*p:=p_i$, by $p_{ij}$ the pullback of $p$ first along $d_j$ then along $d_i$, i.e. $d_i^*d_j^*p:=p_{ij}$, and so on for the further pullbacks.

We choose a set-theoretic cross section $s: P \ra C$ of the surjective sheaf morphism $j:C \ra P$, i.e $j \circ s =\id_P$. For any $p \in P(V_0)$, $s(p_0+ p_1)$ and $s(p_0)+s(p_1)$ are not necessarily equal in $C(V_1)$ as the sheaf map $s$ is not a homomorphism. The obstruction to $s$ being a sheaf homomorphism is measured by a sheaf map $f^{-1}:P \times P \ra B$ so that the relation
\begin{equation*}\label{eqn:2}
s(p_0+p_1)=s(p_0)+s(p_1)+\lambda(f^{-1}(p_0,p_1)),
\end{equation*}
is satisfied in $P(V_1)$. 

The pullback of the elements $p_0$ and $p_1$ in $P(V_1)$ to $V_2$ are the elements $p_{00}$, $p_{01}$, and $p_{11}$. Using the associativity of the addition of $P(V_2)$ and  $s((p_{00}+p_{01})+p_{11})=s(p_{00}+(p_{01}+p_{11}))$, we find that
\begin{equation*}\label{eqn:3}
f^{-1}(p_{01},p_{11})-f^{-1}(p_{00}+p_{01},p_{11})+f^{-1}(p_{00},p_{01}+p_{11})-f^{-1}(p_{00},p_{01}),
\end{equation*}
is in $\ker(\lambda)(V_2)$, which implies the existence of a sheaf map $f^{-2}:P^3 \ra A$ satisfying the relation
\begin{equation*}\label{eqn:4}
\delta(f^{-2}(p_{00},p_{01},p_{11}))=f^{-1}(p_{01},p_{11})-f^{-1}(p_{00}+p_{01},p_{11})+f^{-1}(p_{00},p_{01}+p_{11})-f^{-1}(p_{00},p_{01}),
\end{equation*}
in $B(V_2)$. As a consequence, $f^{-2}$ should be interpreted as an obstruction to the associativity. To find a coherence on $f^{-2}$, we pull $f^{-2}$ back to $V_3$ and observe that the expression
\begin{gather}
\begin{aligned}\label{eqn:5}
\nonumber&f^{-2}(p_{001},p_{011},p_{111})-f^{-2}(p_{000}+p_{001},p_{011},p_{111})+f^{-2}(p_{000},p_{001}+p_{011},p_{111})\\
\nonumber-&f^{-2}(p_{000},p_{001},p_{011}+p_{111})+f^{-2}(p_{000},p_{001},p_{011}),
\end{aligned}
\end{gather}
is in $\ker(\delta)(V_3)$, hence it exists a sheaf map $c:P^4 \ra G$ satisfying the relation
\begin{gather}
\begin{aligned}\label{eqn:6}
 \nonumber i(c(p_{000},p_{001},p_{011},p_{111}))=f^{-2}(p_{000},p_{001},p_{011})+f^{-2}(p_{000},p_{001}+p_{011},p_{111})\\
\nonumber +f^{-2}(p_{001},p_{011},p_{111})-f^{-2}(p_{000}+p_{001},p_{011},p_{111})-f^{-2}(p_{000},p_{001},p_{011}+p_{111})
\end{aligned}
\end{gather}
over $V_3$. When pulled back to $V_4$, the map $c$, seen as an obstruction to the coherence of the associativity, satisfies the relation
\begin{gather}
\begin{aligned}\label{cocycle:1}
&c(p_{0001},p_{0011},p_{0111},p_{1111})-c(p_{0000}+p_{0001},p_{0011},p_{0111},p_{1111})\\+&c(p_{0000},p_{0001}+p_{0011},p_{0111},p_{1111})-
c(p_{0000},p_{0001},p_{0011}+p_{0111},p_{1111})\\+&c(p_{0000},p_{0001},p_{0011},p_{0111}+p_{1111})-c(p_{0000},p_{0001},p_{0011},p_{0111})=0.
\end{aligned}
\end{gather}

After the associativity constraint, we involve the commutativity constraint in the discussion. The equality $s(p_0+p_1)=s(p_1+p_0)$ in $C(V_1)$ requires $f^{-1}(p_0,p_1)-f^{-1}(p_1,p_0)$ to be in $\ker(\lambda)(V_1)$ which implies the existence of a sheaf map $g^{-2}:P \times P \ra A$ satisfying the relation
\begin{equation}\label{eqn:7}
\delta(g^{-2}(p_0,p_1))=f^{-1}(p_0,p_1)-f^{-1}(p_1,p_0),
\end{equation}
in $B(V_1)$ from which it follows that $g^{-2}(p_0,p_1)+g^{-2}(p_1,p_0)$ is in $\ker(\delta)(V_1)$. Then there is a sheaf map $c':P \times P \ra G$ satisfying
\begin{equation}\label{eqn:8}
i(c'(p_0,p_1))=-(g^{-2}(p_0,p_1)+g^{-2}(p_1,p_0)).
\end{equation}
The injectivity of $i$ gives the relation
\begin{equation}\label{cocycle:2}
c'(p_0,p_1)-c'(p_1,p_0)=0.
\end{equation}
In case $p_0=p_1$ over $V_1$, from (\ref{eqn:7}) we find $\delta(g^{-2}(p_0,p_0))=0$. Hence, there exists $c'':P \ra G$ so that  $i(c''(p_0))=-g^{-2}(p_0,p_0)$ in $A(V_1)$ which implies with (\ref{eqn:8}) the relation
\begin{equation}\label{cocycle:2_bis}
2c''(p_0)=c'(p_0,p_0).
\end{equation}

Next, we explore the compatibility between the associativity and the commutativity constraints.  As the pullbacks $p_{00}$, $p_{01}$, $p_{11}$ of the elements $p_0$ and $p_1$ in $P(V_1)$ to $P(V_2)$ satisfy  $s((p_{00}+p_{01})+p_{11})=s(p_{00}+(p_{01}+p_{11}))=s((p_{01}+p_{11})+p_{00})=s(p_{01}+(p_{11}+p_{00}))=s(p_{01}+(p_{00}+p_{11}))=s((p_{01}+p_{00})+p_{11})$, we find that the expressions 
\begin{align}
\nonumber &\delta(g^{-2}(p_{00},p_{01})-g^{-2}(p_{00},p_{01}+p_{11})+g^{-2}(p_{00},p_{11})),\\
\nonumber  &\delta(f^{-2}(p_{00},p_{01},p_{11})-f^{-2}(p_{01},p_{00},p_{11})+f^{-2}(p_{01},p_{11},p_{00})),
\end{align}
are equal over $V_2$. Hence, there exists $c''':P^3 \ra G$ so that 
\begin{gather}
\begin{aligned}\label{eqn:9}
 i(c'''(p_{00},p_{01},p_{11}))&=f^{-2}(p_{01},p_{11},p_{00})+g^{-2}(p_{00},p_{01}+p_{11})+f^{-2}(p_{00},p_{01},p_{11})\\
&-f^{-2}(p_{01},p_{00},p_{11})-g^{-2}(p_{00},p_{01})-g^{-2}(p_{00},p_{11}).
\end{aligned}
\end{gather}
$c'''$ can be interpreted as an obstruction to the compatibility between the associativity and the commutativity constraints. The map $c'''$ can also be \textit{seen} as the difference between the two moves that send the element $p_{00}$ to the end of the ordered list of elements $(p_{00},p_{01},p_{11})$ in $P(V_2)$. One of the moves sends $p_{00}$ to the end of the list by moving it over $p_{01}+p_{11}$, whereas the other sends $p_{00}$ to the end of the list by moving it first over $p_{01}$, then over $p_{11}$. To find a coherence condition to this obstruction we pull (\ref{eqn:9}) back to $V_3$ and observe that $c'''$ satisfies the relation
\begin{gather}
\begin{aligned}\label{cocycle:3}
&c(p_{000},p_{001},p_{011},p_{111})+c'''(p_{000},p_{001},p_{011}+p_{111})+c'''(p_{000},p_{011},p_{111})\\-&c(p_{001},p_{000},p_{011},p_{111})-c'''(p_{000},p_{001}+p_{011},p_{111})-c(p_{001},p_{011},p_{111},p_{000})\\+&c(p_{001},p_{011},p_{000},p_{111})-c'''(p_{000},p_{001},p_{011})=0.
\end{aligned}
\end{gather}
We can also describe how to move the element $p_{11}$ to the beginning of the ordered list $(p_{00},p_{01},p_{11})$. Using $s(p_{00}+(p_{01}+p_{11}))=s((p_{00}+p_{01})+p_{11})=s(p_{11}+(p_{00}+p_{01}))=s((p_{11}+p_{00})+p_{01})=s((p_{00}+p_{11})+p_{01})=s(p_{00}+(p_{11}+p_{01}))$, we find that the expressions
\begin{align}
\nonumber &\delta(-g^{-2}(p_{01},p_{11})+g^{-2}(p_{00}+p_{01},p_{11})-g^{-2}(p_{00},p_{11})),\\
\nonumber  &\delta(f^{-2}(p_{00},p_{01},p_{11})-f^{-2}(p_{00},p_{11},p_{01})+f^{-2}(p_{11},p_{00},p_{01})),
\end{align}
are equal over $V_2$. Hence, there exists $c'''':P^3 \ra G$ so that 
\begin{gather}
\begin{aligned}\label{eqn:10}
i(c''''(p_{00},p_{01},p_{11}))&=f^{-2}(p_{00},p_{11},p_{01})+g^{-2}(p_{00}+p_{01},p_{11})-f^{-2}(p_{00},p_{01},p_{11})\\
&-f^{-2}(p_{11},p_{00},p_{01})-g^{-2}(p_{00},p_{11})-g^{-2}(p_{01},p_{11}).
\end{aligned}
\end{gather}
We interpret $c''''$ as an another obstruction to the compatibility between the associativity and the commutativity constraints. It can also be seen as the difference between the moves that send $p_{11}$ to the beginning of the list. Upon pulling (\ref{eqn:10}) back to $V_3$, we observe that $c''''$ satisfies the coherence condition
\begin{gather}
\begin{aligned}\label{cocycle:4}
-&c(p_{000},p_{001},p_{011},p_{111})+c'''(p_{000}+p_{001},p_{011},p_{111})+c(p_{000},p_{001},p_{111},p_{011})\\+&c'''(p_{000},p_{001},p_{111})+c(p_{111},p_{000},p_{001},p_{011})-c'''(p_{000},p_{001}+p_{011},p_{111})\\-&c(p_{000},p_{111},p_{001},p_{011})-c'''(p_{001},p_{011},p_{111})=0.
\end{aligned}
\end{gather}

As both $c'''$ and $c''''$ are obstructions to the compatibility between the associativity and the commutativity constraints, it is expected to have compatibility between them. First of all, on an ordered list of four elements $(p_{000},p_{001},p_{011},p_{111})$ in $P(V_3)$ obtained by pulling the elements $p_{00}$, $p_{01}$, and $p_{11}$ in $P(V_2)$ back to $V_3$, to obtain the order $(p_{011},p_{111},p_{000},p_{001})$, we can either move $p_{000}$ and $p_{001}$ to the end of the list or move $p_{011}$ and $p_{111}$ to the beginning of the list. The first compatibility condition between $c'''$ and $c''''$ says that both of these ways are the same. That is, using $s(p_{000}+p_{001}+p_{011}+p_{111})=s(p_{011}+p_{111}+p_{000}+p_{001})=s(p_{011}+p_{000}+p_{001}+p_{111})=s(p_{000}+p_{011}+p_{111}+p_{001})$ with all possible groupings we find the coherence condition
\begin{gather}
\begin{aligned}\label{cocycle:5}
&c'''(p_{000}+p_{001},p_{011},p_{111})-c'''(p_{000},p_{011},p_{111})-c'''(p_{001},p_{011},p_{111})\\
-&c''''(p_{000},p_{001},p_{011}+p_{111})+c''''(p_{000},p_{001},p_{011})+c''''(p_{000},p_{001},p_{111})\\
+&c(p_{000},p_{001},p_{011},p_{111})+c(p_{000},p_{011},p_{111},p_{001})+c(p_{011},p_{111},p_{000},p_{001})\\
+&c(p_{011},p_{000},p_{001},p_{111})-c(p_{000},p_{011},p_{001},p_{111})-c(p_{011},p_{000},p_{111},p_{001})=0.
\end{aligned}
\end{gather}
Secondly, in an ordered list of three elements $(p_{00},p_{01},p_{11})$ in $P(V_2)$, moving $p_{00}$ first to the end of the list and then back to the beginning of the list should be compatible with not moving $p_{00}$ at all. Therefore the difference between various ways of moving $p_{00}$ to the end of the list (i.e, $c'''(p_{00},p_{01},p_{11})$) and the difference between various ways of moving $p_{00}$ to the beginning of the list (i.e. $c''''(p_{01},p_{11},p_{00})$) should add up to zero. This translates into the coherence condition
\begin{gather}
\begin{aligned}\label{cocycle:6}
c'(p_{00},p_{01}+p_{11})+c'''(p_{00},p_{01},p_{11})+c''''(p_{01},p_{11},p_{00})-c'(p_{00},p_{01})-c'(p_{00},p_{11})=0.
\end{aligned}
\end{gather}
Moreover, the compatibility between moving $p_{11}$ first to the beginning of the list (i.e. $c''''(p_{00},p_{01},p_{11})$) and then to the end of the list (i.e. $c'''(p_{11},p_{00},p_{01})$) and not moving $p_{11}$ at all, translates into the coherence relation 
\begin{gather}
\begin{aligned}\label{cocycle:7}
c'(p_{00}+p_{01},p_{11})+c''''(p_{00},p_{01},p_{11})+c'''(p_{11},p_{00},p_{01})-c'(p_{00},p_{11})-c'(p_{01},p_{11})=0.
\end{aligned}
\end{gather}
The final coherence condition between $c'''$ and $c''''$ is given by the relation 
\begin{gather}
\begin{aligned}\label{cocycle:8}
c'''(p_{00},p_{01},p_{11})-c'''(p_{00},p_{11},p_{01})+c''''(p_{00},p_{01},p_{11})-c''''(p_{01},p_{00},p_{11})=0,
\end{aligned}
\end{gather}
and it describes how to interchange $p_{00}$ and $p_{11}$ in an ordered list of three elements $(p_{00},p_{01},p_{11})$.

There is one last coherence condition enjoyed by all the obstructions found so far. From the observation that for any $p$, $q$ in $P(V_0)$, $2(c''(p+q)-c''(p)-c''(q))$ is equal to $2(c(p,q,p,q)-c''''(p,q,p+q)-c'''(p,p,q)-c'''(q,p,q)+c'(q,p))$, we find the relation
\begin{equation}\label{cocycle:9}
-c(p,q,p,q)+c''''(p,q,p+q)+c'''(p,p,q)+c'''(q,p,q)-c'(q,p)+c''(p+q)-c''(p)-c''(q)=0.
\end{equation}

We can summarize the above calculations as follows: The collection of maps $(c,c',c'',c''',c'''')$ is in $\hhom(L^4[P],G)$. Since the maps $(c,c',c'',c''',c'''')$ satisfy the relations (\ref{cocycle:1}), (\ref{cocycle:2}), (\ref{cocycle:2_bis}), (\ref{cocycle:3}), (\ref{cocycle:4}), (\ref{cocycle:5}), (\ref{cocycle:6}), (\ref{cocycle:7}), (\ref{cocycle:8}), and  (\ref{cocycle:9}) they are in the kernel of $(D_5)^*:\hhom(L^4[P],G) \ra \hhom(L^5[P],G)$ induced by the differential $D_5:L^5[P]\ra L^4[P]$ of the complex $L.(P)$. This is why the collection $(c,c',c'',c''',c'''')$ can be called as a 4-cocycle of $P$ with values in $G$. Upon choosing another set-theoretic cross section $t:P \ra C$, we find another collection of maps $(d,d',d'',d''',d'''')$ satisfying the same relations as the collection $(c,c',c'',c''',c'''')$. We leave it to the reader to show that these two collections are cohomologous. In summary, these calculations show that we can use the complex $L.(P)$ to compute $\HH^4(\mathrm{R}\hhom(P,G))$, that is $\HH^4(\mathrm{R}\hhom(P,G))=\HH^4(\hhom(L.(P),G))$.

In the calculations, we find a map $c'':P \ra G$ satisfying the relation (\ref{cocycle:2_bis}) that does not appear in the complex $A(P)$. Therefore we add the differentials $D_4[p]=-[p|_2p]$ and $D_5[p]=2[p]-[p|_3p]$ to the fourth and the fifth entries of $A(P)$. The calculations also show that the collection $(c,c',c'',c''',c'''')$ shall satisfy the relation (\ref{cocycle:9}). This corresponds to adding another generator $[p|^4q]$ to the fifth entry of $A(P)$ to kill the class $-[p |_1 q |_1 p |_1 q]+[p |_1 q |_2 p+q] + [p |_2 p |_1 q]+[q |_2 p |_1 q] - [q |_3 p] + [p + q]-[p]-[q]$, that is we shall add another differential $D_5[p|^4q]=-[p |_1 q |_1 p |_1 q]+[p |_1 q |_2 p+q] + [p |_2 p |_1 q]+[q |_2 p |_1 q] - [q |_3 p] + [p + q]-[p]-[q]$. The addition of these differentials turns the complex $A(P)$ in to the complex $L.(P)$. Hence, we can use $L.(P)$ as a free resolution of $P$ to compute $\eext^3(P,G)$.

We finish this calculation section by pointing out that the above discussion has another half which is not mentioned here. It is the reconstruction of the extension (\ref{eqn:1}) from the cocyclic description of the $G[2]$-torsor over $P$ which would have required a descent type argument over the complex $L.(P)$. The details of such a reconstruction will be the subject of a forthcoming paper where the $\twoG_{\twoP}$-torsors over $\twoP$ and the extensions of $\twoP$ by $\twoG$ will be studied in terms of cocycles.

\subsection{An algebraic point of view concerning the strict Picard condition}

Algebraically, adding the differentials $D_4[p]$, $D_5[p]$, and $D_5[p|^4q]$ arising from the strict Picard condition to the complex $A(P)$ can be described as follows: consider $\alpha:\mathbb{Z}[P] \oplus \mathbb{Z}[P^2] \ra \mathbb{Z}[P]$, defined by $\alpha[p]=-2[p]$ and $\alpha[p|^4 q]=[p]+[q]-[p+q]$, as the chain complex whose entries at degrees 3 and 4 are $\mathbb{Z}[P]$ and $\mathbb{Z}[P] \oplus \mathbb{Z}[P^2]$, respectively and all other entries are 0. We define the morphism $f$
\begin{equation}\label{chain_complex}
\begin{tabular}{c}
\xymatrix{\hdots\ar[r]& 0 \ar[r] \ar[d]& \mathbb{Z}[P] \oplus \mathbb{Z}[P^2] \ar[r]^(0.6){\alpha} \ar[d]^{f_4} & \mathbb{Z}[P] \ar[r] \ar[d]^{f_3} & 0 \ar[d] \ar[r] & \hdots\\
\hdots \ar[r] &A(P)_5 \ar[r]_{\partial_5} & A(P)_4 \ar[r]_{\partial_4} & A(P)_3 \ar[r]_(0.6){\partial_3} &A(P)_2\ar[r] & \hdots}
\end{tabular}
\end{equation}
where $f_i=0$ for $i \neq 3,4$ and $f_3[p]=[p|_2p]$, $f_4[p]=[p|_3p]$, and $f_4[p|^4q]=[p |_1 q |_1 p |_1 q]-[p |_1 q |_2 p+q] - [p |_2 p |_1 q] - [q |_2 p |_1 q] + [q |_3 p]$. It is straightforward to observe that the cone of (\ref{chain_complex}) is the complex $L.(P)$.

\bibliographystyle{plain}

\end{document}